\DeclarePairedDelimiterX\Basics[1](){\let\given\sgiven #1}
\newtheorem{theorem}{Theorem}
\newtheorem{lemma}[theorem]{Lemma}
\newtheorem{remark}[theorem]{Remark}
\newtheorem{definition}[theorem]{Definition}
\newtheorem{proposition}[theorem]{Proposition}
\numberwithin{equation}{section}
\newtheorem*{assumption*}{\assumptionnumber}
\newcommand{\innermid}{\nonscript\;\delimsize\vert\nonscript\;}
\newcommand{\activatebar}{%
  \begingroup\lccode`\~=`\|
  \lowercase{\endgroup\let~}\innermid 
  \mathcode`|=\string"8000
}
\providecommand{\assumptionnumber}{}
\DeclareMathOperator*{\argmax}{\arg\max}
\newlist{steps}{enumerate}{1}
\setlist[steps, 1]{label = Step \arabic*:}
\let\given\givenbase
\newcommand{\interior}[1]{%
  {\kern0pt#1}^{\mathrm{o}}%
}
\begin{document}
\title{
Interbank lending with benchmark rates:\\
Pareto optima for a class of singular control games
\\
{\small {\it In memory of Mark H Davis, mentor and friend}}}
\author{
Rama Cont
\thanks{Mathematical Institute, University of Oxford. 
\ {\tt Rama.Cont@maths.ox.ac.uk, xur@maths.ox.ac.uk}}
\and Xin Guo
\thanks{Dept of Industrial Engineering and Operations Research, University of California, Berkeley.\ {\tt xinguo@berkeley.edu}}
\and Renyuan Xu $^*$
}
\date{First version: October 30, 2020.
This version: June 12, 2021.}
\maketitle
\begin{abstract}
We analyze a class of 
stochastic  differential games of singular control, motivated by the study of a dynamic model of interbank lending with benchmark rates. We describe   Pareto optima for this game and show how they may be achieved through the intervention of a regulator, whose policy is a solution to a singular stochastic  control problem. Pareto optima are  characterized  in terms of the solutions to a new class of  Skorokhod problems with piecewise-continuous  free boundary. 

Pareto optimal policies are shown to correspond to the enforcement of endogenous   bounds on interbank lending rates.
Analytical comparison between Pareto optima and Nash equilibria 
provides insight into the impact of regulatory intervention on the stability of  interbank rates. 
\end{abstract}
{\bf Keywords:} LIBOR rate, interbank markets, stochastic differential game, singular stochastic control, Pareto optimum, Nash equilibrium, Skorokhod problem. 
\tableofcontents

\section{Introduction}

The market for interbank lending offers an interesting example of strategic interaction among financial institutions in which players react to the {\it distribution} of the actions of other players.
One of the widely commented features of the interbank market is the fixing mechanism for interbank benchmark interest rates, the most well-known example of which is the London Interbank Offer Rate (LIBOR) which plays a central role in financial markets.
Historically these benchmarks have not been negotiated rates but a `trimmed' average of quotes collected daily from major banks. Every day, participating banks contribute a quote   representing their offered rate; a calculation agent then `trims' the tails of the distribution by removing the highest and lowest quotes and computes the value of the benchmark rate   as a weighted average of the remaining non-discarded quotes \citep{avellaneda2010}.
The resulting benchmark rate   --the LIBOR rate-- then serves as a reference for the valuation of interbank loans and debt contracts, as well as many other financial contracts indexed on the benchmark rate.
A deviation (spread) of a bank's rate from the benchmark may lead to a perception of credit risk and loss of market share -if the spread is positive- or an opportunity cost if the spread is negative, thus  incentivizing banks to align their offered rates with the benchmark.


This mechanism leads to strategic interactions among market participants in a dynamic setting, where interactions are mediated through an average action, or more generally through the distribution of actions of other participants and has been criticized for its vulnerability to manipulations \citep{avellaneda2010}, which have been extensively documented \citep{wheatley,duffie2015}. One of the lessons from the manipulation of LIBOR and other benchmarks is that insufficient attention had been paid to incentives, strategic interactions, mechanism design and the role of the regulator in such markets.

\subsection{A model of interbank lending with benchmark rates}\label{sec:LIBOR}
We shall now describe a stylized model of interbank rates which represents interactions among banks in terms of a stochastic dynamic game. 

Consider first an exogenous process $r_t$ representing a rate set by the central bank, with respect to which banks will position their lending rates. $r_t$ is typically modeled as a mean-reverting diffusion process driven by a multidimensional Brownian motion $\pmb{B}$ representing risk factors driving random macroeconomic shocks.
Each bank $i$ quotes a rate $r^i_t$ at a `spread' $X^i_t$ with respect to the reference rate $r_t$: 
 $r^i_t= r_t + X^i_t$. 
The spread of each bank $i$ is affected by  the macroeconomic shocks 
but the bank  may control its rate $r^i_t$ through positive or negative adjustments to its spread $X^i_t$, which we may represent by a pair $(\xi^{i,+}, \xi^{i,-})$ of non-decreasing processes representing increases (resp. decreases) in the spread: 
\begin{eqnarray}
dX_t^i =  {\pmb\sigma}^{i} \cdot d\pmb{B}_t+ d\xi_t^{i,+} -  d\xi_t^{i,-},\label{eq.interbank}
\end{eqnarray} 
where $\pmb{\sigma}^{i}$  is a volatility matrix representing the sensitivity of the spread $X^i$ to macroeconomic factors.
The benchmark (`LIBOR') rate $L_t$ is then defined as a weighted average of these offered rates:
$$ L_t=r_t+\overline{X}_t,\quad  \overline{X}_t =\sum_{i=1}^N a_i X^i_t\qquad {\rm and}\quad a_i\geq 0,\qquad \sum_{i=1}^N a_i=1.$$
Note that the `drift' term in the dynamics \eqref{eq.interbank} originates from the control. One may also consider  an additional drift term $\mu^idt$ in the uncontrolled dynamics, a positive drift corresponding to a bank whose creditworthiness is gradually deteriorating, leading to a steady increase of its spread. (See more general set-up in Section \ref{sec:formulation}.)

 We now turn to the incentives and costs faced by the banks. 
Each bank $i$ receives interest income from its lending activity, at rate  $r^i_t$.
The interest income of the bank over a short period $[t,t+ dt]$ is $r^i_t Q^i_t\ dt$ where $Q^i_t >0$ is the volume of lending activity (loan volume). Given that the bank can borrow at the interbank $L_t=r_t+\overline{X}_t,$ this represents an {\it opportunity cost} of
 $(\overline{X}_t -X^i_t) Q^i_t\ dt$.
In a competitive lending market, the loan volume $Q^i_t$ of bank $i$ will be a decreasing function $q_i(.)$ of its spread $r^i_t-L_t=X^i_t-\overline{X}_t$ relative to the benchmark rate:  $Q_t^i=q_i(X^i_t-\overline{X}_t)$. Assuming an inter-temporal discount rate of $\rho>0$, this leads to a running cost term $$\int_0^\infty e^{-\rho t}(\overline{X}_t -X^i_t)\  q_i(X^i_t-\overline{X}_t)\ dt.$$ For example, an affine dependence $q_i(x)=Q^i_0-\kappa_i x$, where $\kappa_i>0$ represents the sensitivity of loan volume to  the interest rate, leads to a linear-quadratic  cost
$\int_0^\infty {e}^{-\rho t}[Q^i_0 (\overline{X}_t -X^i_t)+ \kappa_i (\overline{X}_t -X^i_t)^2]  dt.$

These considerations only pertain to the relative costs of bank simultaneously engaging in borrowing and lending. Other constraints prevent the banks from deviating from the reference rate beyond a certain level; these are often `soft', rather than hard (i.e., inequality),  constraints and may be modeled by a  penalty on $|r^i_t|$, or equivalently a running cost $f_i(X^i_t)$ where $f_i$ is centered at some reference value and increases fast enough (e.g., quadratically) 
at infinity. As an example we shall use $f_i(x)= \nu_i (x-s_0)^2$ with $\nu_i>0$.

The benchmark fixing mechanism described above may be incorporated in the model through a cost term associated with the control $(\xi^{i,+},\xi^{i,-})$. Recall that
the LIBOR is computed as a {\it trimmed} average of quotes, discarding the highest and lowest `outliers'. This means an offered rate $X^i$ will not be taken into account if it lies too far from the mean. In absence of collusion between banks, this mechanism discourages them from making large daily adjustments to their offered rates, as a large upward or downward adjustment may result in their quotes being disregarded in the benchmark calculation. This may be modeled through a cost term which penalizes the  size  of the adjustment e.g., $K_i^{+}  d {\xi}_t^{i,+} +K_i^{-} d {\xi}_t^{i,-}$, with $K_i^+, K_i^- >0$,
where $1/K_i^-$ (resp. $1/K_i^+$) represents a typical distance  $(X^i-\overline{X})_+$ (resp. $(\overline{X}-X^i)_+$)  beyond which quotes are discarded. For instance one can take $K_i^+=K_i^-=1/\gamma $ where $\gamma$ represents a measure of dispersion (interquartile range or multiple of standard deviation) of the quote distribution. The case of an asymmetric penalty $K_i^+> K_i^-$ (resp. $K_i^+< K_i^-$) is useful to model the case of a bank $i$ systematically quoting above (or below) the benchmark.
This leads to an objective function
\begin{eqnarray}
J^i (\pmb{x};\pmb{\xi}) = \mathbb{E}\Bigg[ \int_0^{\infty} e^{-\rho t} \Bigg(\left(\overline{X}_t -X^i_t\right) q_i\left(X^i_t-\overline{X}_t\right)dt+ \nu_i \left(X^i_t-s_0\right)^2 dt+ K_i^{+}  d {\xi}_t^{i,+} +K_i^{-} d {\xi}_t^{i,-} \Bigg)\  \Bigg| {\pmb{X}}_{0-}=\pmb{x} \Bigg]\quad  \label{eq.payoff}
\end{eqnarray}
for bank $i$, where the control variable is
a pair of non-decreasing processes $\left(\xi^{i,+}, \ \xi^{i,-}\right)$
 representing the rate adjustments of bank $i$  and the expectation is taken with respect to the law of the controlled process \eqref{eq.interbank}.
 The controls $\xi^{i,+}, \ \xi^{i,-}$ are in general allowed to be right-continuous   with left limits (c\`adl\`ag) with possible jumps  as well as continuous adjustments to the rates.
  Such controls are called singular controls \citep{BSW1980, karatzas1982} and have been used for analyzing 
optimal investment policy and option pricing and hedging problems with transaction costs \citep{davis1990portfolio,davis1993european,kallsen2017general,zariphopoulou1992investment}.

In the case where $a_i=\frac{1}{N}$, $q_i=q_j$, $\nu_i=\nu_j$ and $K_i^{\pm}=K_j^{\pm}$ for $i\neq j$, the payoff structure is symmetric under permutation of indices and this can be formulated as mean field game \citep{LL2007, HMC2006}, which was studied under Nash equilibrium in \citep{GX2019}. However we shall not need this assumption and will treat below the case of a more general, not necessarily symmetric, cost function $h^i(\pmb{X}_t)$. This is more natural for the interbank lending problem.

\subsection{A class of stochastic differential  games of singular control}
Motivated by the example above, we study a class of $N$-player  stochastic differential  games,  where each player $i=1, \cdots, N$ controls a diffusive process $X^i_t$
 through  $\pmb{\xi}^i :=(\xi^{i,+}, \xi^{i,-})$ additive control terms
 \begin{eqnarray}\label{eq.evolution}
dX_t^i =  \mu^i dt   + \pmb{\sigma}^i \cdot d \pmb{B}_t + d\xi_t^{i,+} -  d\xi_t^{i,-},\quad X_{0-}^i = x^i,
\end{eqnarray}
and seeks to minimize  the sum of a discounted running cost and a proportional cost of intervention
\[ 
J^i (\pmb{x};\pmb{\xi}) = \mathbb{E}\Bigg[ \int_0^{\infty} e^{-\rho t} \left(h^i(\pmb{X}_t)dt+ K_i^{+}  d {\xi}_t^{i,+} +K_i^{-} d {\xi}_t^{i,-} \right)\  \Bigg| {\pmb{X}}_{0-}=\pmb{x} \Bigg].
\]
The first two terms in \eqref{eq.evolution} correspond to the `baseline' (uncontrolled) diffusion dynamics, and the last two term correspond to the 
 control $\pmb{\xi}^i=(\xi^{i,+}, \xi^{i,-})$, modeled as 
a pair of non-decreasing c\`adl\`ag  processes. 
Here we focus on Pareto-optimal outcomes.

\paragraph{Contribution.} The present work is a study of Pareto-optimal policies for the class of stochastic singular control games considered above, motivated by the interbank lending problem.
We relate the Pareto optima of this game to the solution of a regulator's problem, characterized as a high-dimensional singular stochastic control problem which we study in detail.
The regularity analysis of the value function, following the approach of \cite{SS1989}, for the regulator's problem enables us to  characterize  the optimal controls for this problem
and subsequently the Pareto-optimal policies for the $N$-player game.

We obtain a description of Pareto-optimal policies in terms of a multidimensional Skorokhod problem  for a `regulated diffusion' in a bounded region whose boundary  is  piece-wise smooth with 
possible corners. 
The state process follows a diffusion process in the interior, and the control intervenes only at the boundary to reflect it back into the interior.


Finally, we derive explicit descriptions of Pareto-optimal policies when
$N=2$. This complements the existing literature on Nash equilibrium for stochastic two player games \citep{DF2018,dianetti2020nonzero,hernandez2015zero,KZ2015}.
Analytical comparison between the Pareto-optimal and the Nash equilibrium solutions  demonstrates the role of regulator in the interbank lending market. 

Our analysis for the general case $(N \ge 2)$ provides insights for  regulatory intervention on  the interbank  market. In particular, it allows us to quantify the impact of a regulator on the stability of the benchmark rate.

\paragraph{Relation with previous literature.}

Stylized mean-field models of interbank borrowing and lending have been considered by \cite{CFS2013} and \cite{sun2018}, who focus on Nash equilibria in the case of a large  number of (indistinguishable) players.  Here we consider the case of a finite number of players, allowing them to be non-identical which is more realistic in terms of the  interbank problem at hand, and our focus is on Pareto optima and the role of a regulator.

A related strand of literature  consists of studies for central bank interventions on interest rates and exchange rates using an {\it impulse control} approach  \citep{bensoussan2012,cadenillas2000,jeanblanc1993impulse}. In these approaches, interventions are associated with a fixed cost. The singular control framework adopted here seems more natural  for modeling situations such as interbank markets 
where the cost of intervention is proportional to the action rather than fixed. Singular controls allow for discontinuities and include impulse controls as  special cases. 

Nash equilibria for stochastic games of singular control have been studied by \cite{CFR2013, DF2018, dianetti2020nonzero, hernandez2015zero}; on the other hand, there are few studies of Pareto-optimal strategies for such games.
\cite{ABP2017} consider a two-player game in an impulse control framework between a representative energy consumer and a representative electricity producer, and  derive an asymptotic Pareto-optimal policy. \cite{FL2016} solve explicitly a mean-variance portfolio optimization problem with $N$ stocks. \cite{FRS2017}  and \cite{WE2010}
 consider the problem of public good contribution and  analyze the Pareto-optimal policy for the $N$-player stochastic game under the framework of regular control and singular control, respectively.

The analysis of  Pareto optima in  stochastic  games is often through studying an auxiliary $N$-dimensional stochastic control problem. This approach can be traced back to the economic literature on  mechanism design and social welfare optimization in \cite{Bator1957} and \cite{Coleman1979}.
The  mathematical challenge lies in the associated high-dimensional Hamilton--Jacobi--Bellman (HJB) equations and characterizing the optimal control policy from the regulator.

\paragraph{Outline.} 
The remainder of the paper is organized as follows. Section \ref{sec:formulation} presents  the mathematical formulation of the $N$-player stochastic differential game, and describes its relation with the auxiliary control problem. Section \ref{sec:central_math} provides detailed analysis of the auxiliary control problem and  the construction of the  optimal strategies.  Section \ref{sec:po_strategy} characterizes the Pareto optima in terms of  a sequence of Skorokhod problems. Implications of our analysis for the interbank lending problem are discussed in Section \ref{sec.LIBOR}. Section \ref{sec:bank} provides  explicit   solutions in the case  $N=2$, and compares it with the Nash equilibrium.



\section{Mathematical formulation of the game} \label{sec:formulation} 
In this section, we describe the mathematical framework of the $N$-player game. 

\paragraph{Controlled dynamics.}
Let  $(X^i_t)_{t\ge 0} \in \mathbb{R}$ denote the state of player $i$ at time $t$, $1 \le i \le N$. 
With absence of
controls,   ${\pmb{X}}_t:=(X^1_t, \ldots, X^N_t)\in\mathbb{R}^N$ follows 
\begin{equation} 
\label{Eq. diffusion}
{\pmb{X}}_t= {\pmb{X}}_{0}+ \pmb{\mu}t
+\pmb{\sigma}  \pmb{B}_t, \quad  {\pmb{X}}_{0}=(x^1, \ldots, x^N),
\end{equation} 
where $\pmb{B}: =(B^1, \ldots, B^D)\in \mathbb{R}^D$ is a $D$-dimensional Brownian motion on a filtered probability space $(\Omega,
\mathcal{F}, \{\mathcal{F}_t\}_{t \ge 0}, \mathbb{P})$, and  $\pmb{\mu}:=(\mu_1,\ldots,\mu_N)\in \mathbb{R}^N$ and  $\pmb{\sigma}:=(\sigma_{ij})_{1 \le i  \le N, 1 \leq j \leq D}\in \mathbb{R}^{N \times D}$ are constants with  
  $\pmb{\sigma}\pmb{\sigma}^T \succeq \lambda \textbf{I}$ for some $\lambda>0$.

When player $i$ chooses a control $\pmb{\xi}^i:=(\xi^{i,+}, \xi^{i,-})$   from an admissible control set $\mathcal{U}_N^i$,  then $X^i_t$ evolves as
\begin{eqnarray}\label{Ndynamics}
d X_t^i = \mu^i dt + \pmb{\sigma}^i \cdot d \pmb{B}_t + d \xi_t^{i,+} - d \xi_t^{i,-},\quad X_{0-}^i = x^i.
\end{eqnarray}
Here  $\pmb{\xi}^i=(\xi^{i,+}, \xi^{i,-})$ 
is a pair of non-decreasing  c\`adl\`ag    
processes and $\pmb{\sigma}^i$ is the $i^{th}$ row of the  volatility matrix $\pmb{\sigma}$. We will denote by $\mathbb{P}^{\pmb{x}}$ the law of the process \eqref{Ndynamics} and $\mathbb{E}_{\pmb{x}} $ the expectation with respect to this law.


\paragraph{Admissible controls.}
The set $\mathcal{U}_N^i$ of admissible controls for player $i$ is defined as 
\begin{eqnarray}\label{A_N}
\begin{aligned}
 \mathcal{U}_{N}^i= & \left\{ (\xi_t^{i,+},\xi_t^{i,-})_{t \ge 0} \given  \xi_t^{i,+} \mbox{ and }\xi_t^{i,-} \mbox{ are } {\mathcal{F}_{t}}\mbox{-progressively measurable, c\`adl\`ag non-decreasing,} \right.\\
 & \hspace{50pt} \left.{} \mbox{ with }\mathbb{E} \left[ \int_0^{\infty}e^{-\rho t} d\xi_t^{i,+} \right] <\infty, \mathbb{E} \left[ \int_0^{\infty}e^{-\rho t}d\xi_t^{i,-} \right] <\infty, \xi_{0-}^{i,+}=0, \xi_{0-}^{i,-}=0 \right\}.
\end{aligned}
\end{eqnarray}

\paragraph{Objective functions.}
Each player $i$ chooses a control $(\xi^{i,+}, \xi^{i,-})$ in $\mathcal{U}^i_N$ to minimize
\[ 
J^i (\pmb{x};\pmb{\xi}) = \mathbb{E}_{\pmb{x}}  \int_0^{\infty} e^{-\rho t} \left[h^i(\pmb{X}_t)dt+ K_i^{+}  d {\xi}_t^{i,+} +K_i^{-} d {\xi}_t^{i,-} \right]. \label{N-game}  \tag{\textbf{N-player}}
\]
Here $\rho>0$ is a constant discount factor, $K_i^{+},K_i^{-} >0$ are the cost of controls, and $h^i(\pmb{x}): \mathbb{R}^N \rightarrow \mathbb{R}_+$ is the running cost function.



 \label{sec:central}
 We have focused on  characterizing  Pareto  optima of  the game  \eqref{N-game} subject to the dynamics \eqref{Ndynamics}. 
 
\begin{definition}[Pareto optimality] 
$\pmb{\xi}^* \in \mathcal{U}_N := \Pi_{i=1}^N \mathcal{U}_{N}^i$  is a Pareto-optimal policy for the game \eqref{N-game} if and only if there does not exist $\pmb{\xi} \in \mathcal{U}_N $ such that, for all $\pmb{x}\in\mathbb{R}^N$,
\begin{eqnarray*}
\forall i\in\{1,\ldots,N\},\quad J^i \left(\pmb{x}; \pmb{\xi} \right) \leq J^i \left(\pmb{x}; \pmb{\xi}^* \right); \quad{\rm and}\quad \exists j \in\{1,\ldots,N\},\quad J^j \left(\pmb{x}; \pmb{\xi} \right) <J^j \left(\pmb{x}; \pmb{\xi}^* \right).
\end{eqnarray*}
\end{definition}
Pareto optima correspond to  {\it efficient} outcomes of a game, which may or may not come from decentralized optimization by $N$ players. The intervention of a regulator may be necessary to enforce a Pareto-optimal policy.

\section{Regulator's problem }\label{sec:central_math}

To study Pareto optima for game \eqref{N-game}, we introduce a `welfare function' defined as an aggregate cost:
\begin{eqnarray}
J(\pmb{x}; \pmb{ \xi}) & = &  \sum_{i=1}^N L_i\  J^i(\pmb{x},\pmb{\xi})\label{eq.regulator}\\ 
& = & \mathbb{E}_{\pmb{x}}   \int_0^{\infty} e^{-\rho t} \left[ H(\pmb{X}_t)dt+ \sum_{i=1}^N L_i K_i^{+}  d {\xi}_t^{i,+} +\sum_{i=1}^N L_i K_i^{-} d {\xi}_t^{i,-} \right],\nonumber
\end{eqnarray}
where 
the dynamics of $\pmb{X}_t$ is given by (\ref{Ndynamics}), and  \begin{eqnarray}\label{H}
H(\pmb{x}) := \sum_{i=1}^N L_i h^i(\pmb{x}), \,\,\mbox{with}\,\, L_i >0 \,\,\mbox{and}\,\, \sum_{i=1}^N L_i=1.
\end{eqnarray}
We will show that  Pareto optima of \eqref{N-game} correspond to
solutions of the following auxiliary    stochastic control problem 
\[
v(\pmb{x}) = \min_{\pmb{\xi} \in \mathcal{U}_N} J(\pmb{x};\pmb{\xi}),\label{centrol_controller}\tag{\textbf{Regulator}}
\] 
which may be interpreted as the problem facing a market regulator seeking to optimize the aggregate cost \eqref{eq.regulator}.

To ensure the well-definedness of the game, the following assumptions will be made throughout, unless otherwise specified. 
\paragraph{Assumptions.} 
There exist $C>c>0$  such that 
\begin{enumerate}[font=\bfseries,leftmargin=3\parindent]
\item [A1.] $\forall \pmb{x} \in \mathbb{R}^N$, $0 \leq H(\pmb{x}) \leq C (1+\|\pmb{x}\|^2).$
\item [A2.] $\forall \pmb{x} ,\pmb{x}^{\prime} \in \mathbb{R}^N$, $|H(\pmb{x}) - H(\pmb{x}^{\prime})| \leq C (1+\|\pmb{x}\| +\|\pmb{x}^{\prime}\|)\|\pmb{x}-\pmb{x}^{\prime}\|.$
\item [A3.] $H(\pmb{x}) \in \mathcal{C}^{2}(\mathbb{R}^N)$, $H$ is convex, with $0<c \leq \partial_{\pmb{z}}^2 H(\pmb{x}) \leq C$ for all unit direction $\pmb{z} \in \mathbb{R}^N$.
\end{enumerate}
{For example, for the payoff described in the interbank lending problem in Section \ref{sec:LIBOR},  \begin{equation}
    H(\pmb{x})=\sum_{i=1}^NL_i \left[\kappa_i\left(x^i-{\sum_{j\neq i}a_j x^j}\right)^2 
+\nu_i (x^i)^2 \right]\quad {\rm with}\quad \kappa_i,\nu_i>0.
\end{equation}
Then $H$ satisfies  \textbf{A1}-\textbf{A3} for any choice of weight $L_i>0$.}


We shall first  analyze the regularity  of the value function $v$,  which is necessary for  subsequently establishing  the existence and uniqueness of the optimal control. As we shall see,  the optimal control for \eqref{centrol_controller}  yields a Pareto-optimal policy for game (\ref{N-game}).

The regularity analysis of the value function  involves several steps.  The first step is to show that the value function for \eqref{centrol_controller} is a viscosity solution to the following  HJB equation 
\begin{eqnarray}\label{po_hjb}
\max \{  \rho u - \mathcal{L} u -H(\pmb{x}) ,  \beta(\nabla u)-1  \}=0,
\end{eqnarray}
with the operator $\mathcal{L} = \frac{1}{2}\sum_{i,j=1}^N \pmb{\sigma}^{i} \cdot \pmb{\sigma}^j\,\partial ^2_{ x^i  x^j}+\sum_{i=1}^N \mu^i \,\partial_{ x^i }, $
and 
\begin{eqnarray}\label{gamma_function}
\beta(\pmb{q}) = \max_{1 \leq i \leq N} \left[ \left( \frac{q^i}{L_i K_i^{-}} \right)^+  \lor \left( \frac{q^i}{L_i K_i^{+}} \right)^- \right],
\end{eqnarray}
where $\pmb{q}:=(q^1,\cdots,q^N)$, $(a)^+ = \max\{0,a\}$ and  $(a)^- = \max\{0,-a\}$ for any $a \in \mathbb{R}$.  The second step is to show that the value function for \eqref{centrol_controller} is $\mathcal{W}_{loc}^{2,\infty}$. 

Let us start with the following property of the value function $v$ for  \eqref{centrol_controller}. Throughout the paper, $K$ will be used in the proof for generic positive constants which may represent different values for different estimates.
\begin{proposition}\label{prop:estimation}
Under Assumptions {\bf A1-A2}, there exists $K>0$ such that
\begin{enumerate}[font=\bfseries,leftmargin=3\parindent]
\item[$(i)$] $0  \leq v(\pmb{x}) \leq K(1+\|\pmb{x}\|^2)$, $\forall \pmb{x} \in \mathbb{R}^N$;
\item[$(ii)$] $|v(\pmb{x})-v(\pmb{x}')| \leq K(1+\|\pmb{x}\|+\|\pmb{x}^{\prime}\|)\|\pmb{x}-\pmb{x}^{\prime}\|$, $\forall \pmb{x},\pmb{x}^{\prime}\in \mathbb{R}^N$.
\end{enumerate}
\end{proposition}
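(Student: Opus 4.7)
The plan is to handle (i) by comparison against the do-nothing control and to handle (ii) by transporting an $\varepsilon$-optimal control for $\pmb{x}'$ into one starting from $\pmb{x}$ via an initial lump-sum jump. The jump argument in fact produces a uniform (global) Lipschitz estimate on $v$, from which the stated local Lipschitz bound follows a fortiori.

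For part (i), $v \ge 0$ is immediate from the sign conditions: $H \ge 0$ by A1, $K_i^{\pm} > 0$, and each $\xi^{i,\pm}$ is non-decreasing, so $J(\pmb{x};\pmb{\xi}) \ge 0$ for every admissible $\pmb{\xi}$. For the upper bound, take $\pmb{\xi}\equiv 0$, so that $\pmb{X}_t = \pmb{x} + \pmb{\mu}t + \pmb{\sigma}\pmb{B}_t$; combining A1 with the standard second-moment estimate $\mathbb{E}\|\pmb{X}_t\|^2 \le 2\|\pmb{x}\|^2 + C_1(t + t^2)$ and Fubini gives
\[
v(\pmb{x}) \;\le\; J(\pmb{x};0) \;=\; \mathbb{E}_{\pmb{x}}\!\int_0^{\infty} e^{-\rho t} H(\pmb{X}_t)\,dt \;\le\; K(1 + \|\pmb{x}\|^2).
\]

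For part (ii), fix $\pmb{x},\pmb{x}' \in \mathbb{R}^N$ and $\varepsilon > 0$, and choose $\hat{\pmb{\xi}} \in \mathcal{U}_N$ with $J(\pmb{x}'; \hat{\pmb{\xi}}) \le v(\pmb{x}') + \varepsilon$. I define a control $\pmb{\xi}$ starting from $\pmb{x}$ by prepending an instantaneous jump that sends the state to $\pmb{x}'$:
\[
\xi^{i,+}_t \;=\; (x'^i - x^i)^+ \mathbf{1}_{\{t \ge 0\}} + \hat{\xi}^{i,+}_t, \qquad \xi^{i,-}_t \;=\; (x^i - x'^i)^+ \mathbf{1}_{\{t \ge 0\}} + \hat{\xi}^{i,-}_t.
\]
Each coordinate is cadlag non-decreasing with $\xi^{i,\pm}_{0-} = 0$, and the discounted-integrability conditions transfer from $\hat{\pmb{\xi}}$, so $\pmb{\xi} \in \mathcal{U}_N$. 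By construction $\pmb{X}^{\pmb{x},\pmb{\xi}}_t = \pmb{X}^{\pmb{x}',\hat{\pmb{\xi}}}_t$ for every $t \ge 0$ almost surely, so the running-cost terms in $J(\pmb{x};\pmb{\xi})$ and $J(\pmb{x}';\hat{\pmb{\xi}})$ coincide, and only the atom at $t=0$ contributes to the cost gap:
\[
J(\pmb{x};\pmb{\xi}) \;=\; J(\pmb{x}';\hat{\pmb{\xi}}) + \sum_{i=1}^{N}\bigl[L_i K_i^{+}(x'^i - x^i)^+ + L_i K_i^{-}(x^i - x'^i)^+\bigr] \;\le\; v(\pmb{x}') + \varepsilon + K_{\ast}\|\pmb{x}-\pmb{x}'\|,
\]
with $K_{\ast} := \sqrt{N}\,\max_{i}\max(L_i K_i^{+},\,L_i K_i^{-})$. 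Since $v(\pmb{x}) \le J(\pmb{x};\pmb{\xi})$, letting $\varepsilon \downarrow 0$ and swapping $\pmb{x} \leftrightarrow \pmb{x}'$ yields $|v(\pmb{x}) - v(\pmb{x}')| \le K_{\ast}\|\pmb{x} - \pmb{x}'\|$, which implies the stated estimate with $K = K_{\ast}$ since $1+\|\pmb{x}\|+\|\pmb{x}'\|\ge 1$.

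The only item requiring care is the bookkeeping around the atom at $t=0$: for the constructed $\pmb{\xi}$ the measure $d\xi^{i,+}$ places mass $(x'^i - x^i)^+$ at $\{0\}$ in addition to the mass prescribed by $d\hat\xi^{i,+}$, so $\int_0^{\infty} e^{-\rho t}\,d\xi^{i,+}_t = (x'^i - x^i)^+ + \int_0^{\infty} e^{-\rho t}\,d\hat{\xi}^{i,+}_t$, and analogously for the minus part. Beyond this, the argument is purely mechanical under A1--A2 together with the linear-growth estimates on $\pmb{X}_t$ under $\pmb{\xi} \equiv 0$, and I anticipate no substantive obstacle.
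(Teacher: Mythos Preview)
Your proof is correct, and for part (ii) it takes a genuinely different route from the paper.

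For (i) both you and the paper argue identically: nonnegativity of $H$ gives $v\ge 0$, and plugging in the zero control together with the second-moment bound on the uncontrolled diffusion gives the quadratic upper bound.

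For (ii) the arguments diverge. The paper fixes a single control $\pmb{\xi}$, applies it from both starting points $\pmb{x}$ and $\pmb{x}'$, observes that the resulting state processes differ by the constant vector $\pmb{x}-\pmb{x}'$, and then invokes Assumption~\textbf{A2} to bound
\[
|J(\pmb{x};\pmb{\xi})-J(\pmb{x}';\pmb{\xi})|\le \mathbb{E}\int_0^\infty e^{-\rho t}\,|H(\pmb{X}_t^{\pmb{x}})-H(\pmb{X}_t^{\pmb{x}'})|\,dt,
\]
controlling the right-hand side via the linear growth of $\|\pmb{X}_t\|$ in expectation over the restricted class $\mathcal{U}_{\pmb{x}}\cup\mathcal{U}_{\pmb{x}'}$. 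This produces exactly the stated local Lipschitz constant $K(1+\|\pmb{x}\|+\|\pmb{x}'\|)$.

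Your approach instead exploits the singular-control structure directly: an instantaneous jump at time~$0$ transports the state from $\pmb{x}$ to $\pmb{x}'$ at cost $\sum_i\bigl[L_iK_i^+(x'^i-x^i)^+ + L_iK_i^-(x^i-x'^i)^+\bigr]\le K_\ast\|\pmb{x}-\pmb{x}'\|$, after which the trajectories coincide. This yields a \emph{global} Lipschitz bound $|v(\pmb{x})-v(\pmb{x}')|\le K_\ast\|\pmb{x}-\pmb{x}'\|$, which is strictly stronger than what is claimed and, notably, does not use Assumption~\textbf{A2} at all. The paper's argument, by contrast, would survive in settings where the controller cannot jump (e.g., absolutely continuous controls), since it only compares running costs; your argument is specific to the bounded-variation framework but is shorter and sharper here.
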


\begin{proof}
First, $v(\pmb{x}) \geq 0$ is clear by the non-negativity of $H(\pmb{x})$. Moreover, by the property that $\pmb{\sigma}\pmb{\sigma}^T \succeq \lambda \textbf{I}$ with $\lambda>0$, it follows from a known estimate and martingale argument \citep[(2.15)]{MR1983} that the solution {$\{\tilde{\pmb{X}}_t\}_{t \geq 0} := \{\pmb{x}+\pmb{\mu}t +  \pmb{\sigma}\pmb{B}_t\}_{t \geq 0}$} with $\pmb{\xi} = \pmb{0}$ satisfies
$$\mathbb{E}_{\pmb{x}} \int_0^{\infty} e^{-\rho t} \|\tilde{\pmb{X}}_t\|^2dt \leq K(1+\|\pmb{x}\|^2) , \quad \forall \pmb{x} \in \mathbb{R}^N,$$
for some constant $K>0$. By Assumption \textbf{A1},  there exists a constant $K>0$ such that 
$$v(\pmb{x}) \leq J(\pmb{x},\pmb{0}) \leq K(1+\|\pmb{x}\|^2), \ \ \forall \pmb{x} \in \mathbb{R}^N.$$
Thus $(i)$ of Proposition \ref{prop:estimation} is established. 

For each fixed $\pmb{x} \in \mathbb{R}^N$, let 
\begin{eqnarray}\label{adm_x}
\mathcal{U}_{\pmb{x}} = \{\pmb{\xi} \in \mathcal{U}: J(\pmb{x},\pmb{\xi}) \leq J(\pmb{x};\pmb{0})\}.
\end{eqnarray}
By Assumption \textbf{A1}, 
\begin{eqnarray} \label{bound1}
\mathbb{E}_{\pmb{x}} \int_0^{\infty} e^{-\rho t} \|\pmb{X}_t\|^2 dt \le K(1+\|\pmb{x}\|^2), \qquad \forall \pmb{x} \in \mathbb{R}^N, \pmb{\xi} \in  \mathcal{U}_{\pmb{x}}.
\end{eqnarray}
For $\pmb{\xi} \in \mathcal{U}_{\pmb{x}}$, it is easy to verify
\begin{eqnarray} \label{bound2}
\mathbb{E}_{\pmb{x}}\int_0^{\infty} e^{-\rho t} \|\pmb{\xi}_t\|^2 dt \leq K(1+\|\pmb{x}\|^2),
\end{eqnarray}
and
\begin{eqnarray*}
|v(\pmb{x})-v(\pmb{x}')| \leq \sup \left\{ |J(\pmb{x};\pmb{\xi}) - J(\pmb{x}';\pmb{\xi})| : \pmb{\xi} \in \mathcal{U}_{\pmb{x}} \cup\mathcal{U}_{\pmb{x}'}  \right\}, \forall \pmb{x},\pmb{x}^{\prime} \in \mathbb{R}^N.
\end{eqnarray*}
Meanwhile,
$$ |J(\pmb{x};\pmb{\xi}) - J(\pmb{x}';\pmb{\xi})|  \leq \mathbb{E} \int_0^{\infty} e^{-\rho t} |H(\pmb{X}_t^{\pmb{x}}) - H(\pmb{X}_t^{\pmb{x}'})|dt.$$
Statement (ii) for $v$ follows by Assumption \textbf{ A2}, along with the facts that $\pmb{X}_t^{\pmb{x}}-\pmb{X}_t^{\pmb{x}^{\prime}}=\pmb{x}-\pmb{x}^{\prime}$ and  that for any $\pmb{\xi} \in \mathcal{U}_{\pmb{x}}\cup \mathcal{U}_{\pmb{x}^{\prime}}$,
\begin{eqnarray}\label{eqn1}
\mathbb{E}_{\pmb{x}} \int_0^{\infty} e^{-\rho t}\|\pmb{X}^{\pmb{x}}_t\| dt &\leq& K(1+\|\pmb{x}\|+\|\pmb{x}^{\prime}\|),\\
\mathbb{E}_{\pmb{x}^{\prime}} \int_0^{\infty} e^{-\rho t}\|\pmb{X}^{\pmb{x}^{\prime}}_t\| dt &\leq& K(1+\|\pmb{x}\|+\|\pmb{x}^{\prime}\|).\nonumber
\end{eqnarray}
In fact, if $\pmb{\xi} \in \mathcal{U}_{\pmb{x}}$, \eqref{eqn1} follows immediately from \eqref{bound2} by the H\"older inequality. Meanwhile, if $\pmb{\xi} \in \mathcal{U}_{\pmb{x}^{\prime}}$, \eqref{eqn1} holds because
 $$\|\pmb{X}_t^{\pmb{x}}\| \leq \|\pmb{X}_t^{\pmb{x}^{\prime}}\|+\|\pmb{x}-\pmb{x}^{\prime}\| \leq \|\pmb{X}_t^{\pmb{x}^{\prime}}\|+\|\pmb{x}\|+\|\pmb{x}^{\prime}\|.$$
\end{proof}

Next, we establish the viscosity property of the value function in the following sense.
\begin{definition}[Continuous viscosity solution]
The value function $v$ for problem (\ref{centrol_controller}) is a continuous viscosity solution to \eqref{po_hjb} on $\mathbb{R}^N$ if 
\begin{itemize}
\item 
$\forall \pmb{x}_0 \in \mathbb{R}^N$, $\forall \phi \in \mathcal{C}^2(\mathbb{R}^N)$ such that $\pmb{x}_0$ is a local minimum of $(v-\phi) (\pmb{x})$ with $v(\pmb{x}_0)=\phi(\pmb{x}_0)$,
$$\max \{\rho \phi-\mathcal{L}\phi-H(\pmb{x}),\beta(\nabla \phi)-1\} \ge 0.$$
\item 
$\forall \pmb{x}_0 \in \mathbb{R}^N$, $\forall \phi \in \mathcal{C}^2(\mathbb{R}^N)$ such that $\pmb{x}_0$ is a local maximum of $(v-\phi) (\pmb{x})$ with $v(\pmb{x}_0)=\phi(\pmb{x}_0)$,
$$\max \{\rho \phi-\mathcal{L}\phi-H(\pmb{x}),\beta(\nabla \phi)-1\} \le 0.$$
\end{itemize}
\end{definition}

\begin{theorem}[Viscosity solution] \label{thm:viscosity} Under Assumptions {\bf A1} - {\bf A3}, the value function $v$ to the control problem \eqref{centrol_controller}
is convex and a continuous viscosity solution  of the HJB equation \eqref{po_hjb}.
\end{theorem}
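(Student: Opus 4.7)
I will establish the two viscosity inequalities by combining the dynamic programming principle (DPP) with two families of test admissible controls. The DPP
\[
v(\pmb{x}) = \inf_{\pmb{\xi} \in \mathcal{U}_N} \mathbb{E}_{\pmb{x}}\!\left[\int_0^\tau e^{-\rho t}\!\left(H(\pmb{X}_t)\,dt + \sum_{i=1}^N L_i K_i^+\, d\xi_t^{i,+} + L_i K_i^-\, d\xi_t^{i,-}\right) + e^{-\rho \tau} v(\pmb{X}_\tau)\right]
\]
for any admissible stopping time $\tau$ follows from a standard concatenation argument, with Proposition~\ref{prop:estimation} supplying the quadratic-growth and Lipschitz estimates needed to justify the exchanges of integration and expectation. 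Continuity of $v$ is already given by Proposition~\ref{prop:estimation}(ii).

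For the subsolution property, fix $\phi \in \mathcal{C}^2(\mathbb{R}^N)$ and $\pmb{x}_0$ with $v(\pmb{x}_0) = \phi(\pmb{x}_0)$ a local maximum of $v - \phi$, so $v \leq \phi$ on some ball $B_\delta(\pmb{x}_0)$. I need to verify both arguments of the maximum are nonpositive at $\pmb{x}_0$. For the PDE part, plug $\pmb{\xi} \equiv 0$ on $[0, \tau_\delta]$ into the DPP, with $\tau_\delta$ the exit time of $B_\delta$ capped by a deterministic horizon; Itô's formula applied to $e^{-\rho t}\phi(\pmb{X}_t)$ together with $v(\pmb{X}_{\tau_\delta}) \leq \phi(\pmb{X}_{\tau_\delta})$ leads, after dividing by $\mathbb{E}[\tau_\delta]$ and letting $\delta \downarrow 0$, to $\rho\phi - \mathcal{L}\phi - H \leq 0$ at $\pmb{x}_0$. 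For the gradient part, fix a coordinate $i$ and $\epsilon > 0$ and apply the instantaneous jump $\xi_0^{i,\pm} = \epsilon$ (and no further control); the DPP yields $\phi(\pmb{x}_0) \leq L_i K_i^\pm \epsilon + \phi(\pmb{x}_0 \pm \epsilon\, \pmb{e}_i) + o(\epsilon)$, and Taylor expansion as $\epsilon \downarrow 0$ gives $\mp\partial_{x^i}\phi(\pmb{x}_0) \leq L_i K_i^\pm$, which is exactly $\beta(\nabla\phi(\pmb{x}_0)) \leq 1$ by \eqref{gamma_function}.

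For the supersolution property, fix $\phi \in \mathcal{C}^2(\mathbb{R}^N)$ and $\pmb{x}_0$ with $v(\pmb{x}_0) = \phi(\pmb{x}_0)$ a local minimum of $v - \phi$, and argue by contradiction. Suppose both $\rho\phi - \mathcal{L}\phi - H \leq -\eta$ and $\beta(\nabla\phi) \leq 1 - \eta$ on some $B_r(\pmb{x}_0)$; the second inequality gives $L_i K_i^+ + \partial_{x^i}\phi \geq \eta'$ and $L_i K_i^- - \partial_{x^i}\phi \geq \eta'$ uniformly on $B_r$. For any admissible $\pmb{\xi}$, let $\tau$ be the first exit time of $B_r$ and apply Itô's formula with jump terms to $e^{-\rho t}\phi(\pmb{X}_t)$ on $[0, \tau]$. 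Regrouping the drift term against $H$, each continuous control increment $d\xi_t^{i,\pm,c}$ against $\mp\partial_{x^i}\phi\, d\xi_t^{i,\pm,c}$, and each jump using $\phi(\pmb{X}_t) - \phi(\pmb{X}_{t-}) = \int_0^1 \nabla\phi(\pmb{X}_{t-} + s\,\Delta\pmb{X}_t) \cdot \Delta\pmb{X}_t\, ds$, the strict inequalities force every integrand to be nonneg\-a\-tive, producing
\[
J(\pmb{x}_0; \pmb{\xi}) - \phi(\pmb{x}_0) \geq \eta\, \mathbb{E}_{\pmb{x}_0}\!\int_0^\tau e^{-\rho t}\, dt + \mathbb{E}_{\pmb{x}_0}\bigl[e^{-\rho\tau}\bigl(v(\pmb{X}_\tau) - \phi(\pmb{X}_\tau)\bigr)\bigr].
\]
Since $v \geq \phi$ on $B_r$ and $\mathbb{E}_{\pmb{x}_0}[\tau] > 0$ uniformly by non-degeneracy of $\pmb{\sigma}\pmb{\sigma}^T$, taking the infimum over $\pmb{\xi}$ contradicts $v(\pmb{x}_0) = \phi(\pmb{x}_0)$.

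The main obstacle I anticipate is the jump estimate in the supersolution step: a single jump of $\pmb{\xi}$ may carry the state outside $B_r$, so the gradient inequality need not hold along the whole segment $\{\pmb{X}_{t-} + s\, \Delta\pmb{X}_t : s \in [0,1]\}$. I handle this by stopping at the first jump that exits a slightly enlarged ball $B_{r'}$: the pre-jump state lies in $B_r$, the portion of the segment inside $B_{r'}$ furnishes the desired positive contribution, and any overshoot is absorbed by the terminal term $e^{-\rho\tau}(v(\pmb{X}_\tau) - \phi(\pmb{X}_\tau))$, whose lower bound is controlled by the quadratic growth in Proposition~\ref{prop:estimation}(i) and Assumption~\textbf{A1}.
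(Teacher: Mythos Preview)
Your subsolution argument is essentially the paper's: zero control for the PDE inequality, an instantaneous impulse in each coordinate for the gradient bound. That part is fine.

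The supersolution argument has a genuine gap. After dropping the nonnegative control-variation terms you arrive at
\[
J(\pmb{x}_0;\pmb{\xi}) - \phi(\pmb{x}_0) \;\geq\; \eta\,\mathbb{E}_{\pmb{x}_0}\!\int_0^\tau e^{-\rho t}\,dt \;+\; \mathbb{E}_{\pmb{x}_0}\bigl[e^{-\rho\tau}(v(\pmb{X}_\tau)-\phi(\pmb{X}_\tau))\bigr],
\]
and then assert that $\mathbb{E}_{\pmb{x}_0}[\tau]>0$ uniformly in $\pmb{\xi}$ by nondegeneracy. This is false: $\tau$ depends on the control, and an admissible singular control can push $\pmb{X}$ out of $B_r$ in arbitrarily short time (even instantaneously via a jump at $t=0$), so $\inf_{\pmb{\xi}}\mathbb{E}_{\pmb{x}_0}[\tau]=0$. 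Nondegeneracy of $\pmb{\sigma}\pmb{\sigma}^T$ gives a lower bound on the exit time only for the \emph{uncontrolled} diffusion. What you actually need is a uniform-in-$\pmb{\xi}$ lower bound on the \emph{sum}
\[
\mathbb{E}_{\pmb{x}_0}\!\int_0^{\tau} e^{-\rho t}\,dt \;+\; \sum_i \mathbb{E}_{\pmb{x}_0}\!\int_0^{\tau-} e^{-\rho t}\,(d\xi_t^{i,+}+d\xi_t^{i,-}) \;+\; (\text{exit-jump contribution}),
\]
reflecting the dichotomy ``either the process lingers, or a lot of control was spent to exit quickly.'' The paper obtains this lower bound by introducing an auxiliary smooth barrier $\psi(\pmb{x})=G_0(\|\pmb{x}-\pmb{x}_0\|^2-\epsilon^2)$ with $|\partial_{x^i}\psi|\leq 1$ and $\rho\psi-\mathcal{L}\psi+1\geq 0$ on $\overline{B}_\epsilon(\pmb{x}_0)$, applying It\^o's formula to $e^{-\rho t}\psi(\pmb{X}_t)$, and reading off $g_0=G_0\epsilon^2>0$. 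Your last paragraph about the exit jump is heading in the right direction (the paper handles the same issue by inserting the intermediate boundary point $\pmb{x}_\delta=\pmb{X}_{\tau-}+\delta\,\Delta\pmb{X}_\tau\in\partial B_\epsilon$ and splitting the jump there), but the enlarged-ball argument you sketch does not by itself supply the missing uniform lower bound; you still have to quantify the trade-off between time spent and control expended, and that is precisely what the comparison function $\psi$ delivers.
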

 \begin{proof}
The convexity of $v$ follows from the joint convexity of $J(\pmb{x};\pmb{\xi})$ in the following sense:
\begin{eqnarray}\label{convexity_J}
J(\theta \pmb{x}_1 +(1-\theta) \pmb{x}_2;\theta \pmb{\xi}_1 +(1-\theta) \pmb{\xi}_2) \leq \theta J (\pmb{x}_1; \pmb{\xi}_1) + (1- \theta) J (\pmb{x}_2 ; \pmb{\xi}_2),
\end{eqnarray}
holds for any $\pmb{x}_1,\pmb{x}_2\in \mathbb{R}^N$ and any $\pmb{\xi}_1,\pmb{\xi}_2 \in \mathcal{U}_N$.  To see this, $\pmb{X}_t^{\pmb{x}}$ depends linearly on $(\pmb{x},\pmb{\xi})$, and  both the set $\mathcal{U}_N$ and the function $H$ are convex. {  Under Assumption {\bf A1} - {\bf A3}, the existence of the optimal control to problem \eqref{centrol_controller}
follows from Theorem 4.5 and Corollary 4.11 in \citep{MT1989}.
The convexity of $v$ is verified as below, which follows the standard argument \citep{GP2005,WCM1994}. Take $\pmb{\xi}^*_1 = \arg \min_{\pmb{\xi}\in \mathcal{U}_N}J(\pmb{x}_1;\,\pmb{\xi})$ and 
$\pmb{\xi}^*_2 = \arg \min_{\pmb{\xi}\in \mathcal{U}_N}J(\pmb{x}_2\,;\,\pmb{\xi})$, then  by definition,
\begin{eqnarray}\label{convexity_J2}
 \theta J (\pmb{x}_1; \pmb{\xi}^*_1) + (1- \theta) J (\pmb{x}_2 ; \pmb{\xi}_2^*) = \theta v( \pmb{x}_1) +(1-\theta)v(  \pmb{x}_2).
\end{eqnarray}
Note that $\theta  \pmb{\xi}^*_1 +(1-\theta) \pmb{\xi}^*_2 \in \mathcal{U}_N$ by the convexity of $\mathcal{U}_N$, therefore
\begin{eqnarray}\label{convexity_J3}
v(\theta \pmb{x}_1 +(1-\theta) \pmb{x}_2) = \min_{\pmb{\xi}\in \mathcal{U}_N} J(\theta \pmb{x}_1 +(1-\theta) \pmb{x}_2;\pmb{\xi}) 
\leq J(\theta \pmb{x}_1 +(1-\theta) \pmb{x}_2;\theta \pmb{\xi}^*_1 +(1-\theta) \pmb{\xi}^*_2).
\end{eqnarray}
Combining  \eqref{convexity_J}, \eqref{convexity_J2}, and \eqref{convexity_J3}, 
\[
v(\theta \pmb{x}_1 +(1-\theta) \pmb{x}_2) \leq \theta v( \pmb{x}_1) +(1-\theta)v(  \pmb{x}_2).
\]
}

We now show that $v$ is both a viscosity super-solution and a viscosity sub-solution to the HJB equation \eqref{po_hjb}.\\

\noindent \underline{Sub-solution.}
Consider the following controls:  $  \xi_t^{i,-}=0$ and
\[
\xi_t^{i,+}=\left\{
                \begin{array}{ll}
                0,\quad t=0-,\\
                \eta^{i,+}, \quad  t \ge 0,
                                \end{array}
              \right.
              \]         
    where $0\le \eta^{i,+} \leq \epsilon$. Define the exit time 
    \[
    \tau_{\epsilon} :=\inf \{t \ge 0, \pmb{X}_t \notin \bar{B}_{\epsilon}(\pmb{x}_0)\}.
    \]        
    Note that $\pmb{X}$ has at most one jump at $t=0$ and is continuous on $[0,\tau_{\epsilon})$. 
 { The dynamic programming principle states that for any $\pmb{x}\in \mathbb{R}^N$,
 \begin{eqnarray}\label{eq:dpp}
 v(\pmb{x}) =\inf_{\pmb{\xi}\in \mathcal{U}_N}\mathbb{E}_{\pmb{x}}\left[\int_0^{\theta}\left( e^{-\rho t}H(\pmb{X}_t)dt +\sum_{i=1}^N L_iK_i^{+}d\xi_t^{i,+}+\sum_{i=1}^N L_iK_i^{-}d\xi_t^{i,-}\right)+e^{-\rho \theta}v(\pmb{X}_{\theta}) \right],     
 \end{eqnarray}
 for any $\theta\in \mathcal{F}$ possibly depending on $\pmb{\xi}$ in the infimum of \eqref{eq:dpp}. Therefore,
 }  
    \begin{eqnarray}\label{vscosity_dpp}
    \phi(\pmb{x}_0) = v(\pmb{x}_0) \leq \mathbb{E}_{\pmb{x}_0}\int_0^{\tau_{\epsilon}\wedge h} e^{-\rho t}\left[H(\pmb{X}_t)dt +\sum_{i=1}^N L_iK_i^{+}d\xi_t^{i,+}\right]+\mathbb{E}_{\pmb{x}_0}\left[e^{-\rho (\tau_{\epsilon}\wedge h)}\phi(\pmb{X}_{\tau_{\epsilon}\wedge h}) \right].
    \end{eqnarray}  
    Applying It\^{o}'s formula to the process $e^{-\rho t} \phi(\pmb{X}_t)$ between $0$ and $\tau_{\epsilon}\wedge h$, and taking expectation, we obtain
    \begin{eqnarray}\label{viscosity_ito}
    \mathbb{E}_{\pmb{x}_0}\left[ e^{-\rho (\tau_{\epsilon}\wedge h)}\phi(\pmb{X}_{\tau_{\epsilon}\wedge h})\right] = \phi(\pmb{x}_0)&+&\mathbb{E}_{\pmb{x}_0}\left[ \int_0^{
    \tau_{\epsilon}\wedge h}e^{-\rho t} (-\rho \phi + \mathcal{L}\phi)(\pmb{X}_t)dt\right]\nonumber\\
    &+&\mathbb{E}_{\pmb{x}_0}\left[\sum_{0\leq t \leq \tau_{\epsilon}\wedge h} [\phi(\pmb{X}_t)-\phi(\pmb{X}_{t-})]\right].
    \end{eqnarray}
    Combining \eqref{vscosity_dpp} and \eqref{viscosity_ito}, we have
    \begin{eqnarray}\label{viscosity_inter}
    \mathbb{E}_{\pmb{x}_0}\left[ \int_0^{\tau_{\epsilon}\wedge h} e^{-\rho t} (\rho \phi -\mathcal{L}\phi- H)(\pmb{X}_t)dt\right]&-&\mathbb{E}_{\pmb{x}_0}\left[ \int_0^{\tau_{\epsilon}\wedge h} e^{-\rho t}(\sum_{i=1}^N L_iK_i^+d \xi_t^{i,+})\right]\nonumber\\
    &-&\mathbb{E}_{\pmb{x}_0}\left[\sum_{0\leq t \leq \tau_{\epsilon}\wedge h}\phi(\pmb{X}_t)-\phi(\pmb{X}_{t-})\right]\leq 0.
    \end{eqnarray}
    \begin{itemize}
    \item Taking first $\eta^{i,+}=0$ for all $i=1,2,\cdots,N$, i.e.,  $\xi^{i,+}=\xi^{i,-}=0$, we see that $\pmb{X}$ is continuous and that only the first term in the LHS of \eqref{viscosity_inter} is nonzero. Dividing the above inequality \eqref{viscosity_inter} by $h$ and letting $h \rightarrow 0$, then by the dominated convergence theorem,
    \[
    \rho  \phi(\pmb{x}_0)-\mathcal{L}\phi(\pmb{x}_0)-H(\pmb{x}_0)\leq 0.
    \]
    \item Now, by taking $\eta^{i,+}>0$ and $\eta^{j,+}=0$ for $j \neq i$ in \eqref{viscosity_inter}, and noting that $\xi^{i,+}$ and $\pmb{X}$ jump only at $t=0$ with size $\eta^{i,+}$, we get
    \[
  \mathbb{E}_{\pmb{x}_0} \left[ \int_0^{\tau_{\epsilon}\wedge h} e^{-\rho t} (\rho \phi -\mathcal{L}\phi- H)(\pmb{X}_t)dt\right]-L_iK_i^+\eta^{i,+}-\phi(\pmb{x}_0+\eta^{i,+}\pmb{e}_i)+\phi(\pmb{x}_0) \leq 0.
    \]
    Taking $h \rightarrow 0$, then dividing by $\eta^{i,+}$ and letting $\eta \rightarrow 0$, we have
    \[-L_iK_i^+\leq \partial_{x^i}\phi(\pmb{x}).\]
    \item Meanwhile, taking an admissible control such that $\xi^{i,+}=0$ and 
    \[
\xi_t^{i,-}=\left\{
                \begin{array}{ll}
                0,\quad t=0-,\\
                \eta^{i,-}, \quad t \ge 0,
                                \end{array}
              \right.
              \]         
    where $0\le \eta^{i,-} \leq \epsilon$. By a similar argument, we have
    \[
 \forall i=1,2,\cdots,N,\quad \partial_{x^i}\phi(\pmb{x})\leq L_i K_i^-.
    \]
    \end{itemize}
    This proves the sub-solution viscosity property
    $$\max \{\rho \phi-\mathcal{L}\phi-H(x),\beta(\nabla \phi)-1\} \le 0.$$
\underline{Super-solution.}
This part is proved by contradiction. Suppose otherwise. Then there exist $\pmb{x}_0 \in \mathbb{R}^N$, $\epsilon >0$, $\phi(\pmb{x})\in \mathcal{C}^2(\mathbb{R}^N)$ with $\phi(\pmb{x}_0)=v(\pmb{x}_0)$,  $v \geq \phi$ in $\bar{B}_{\epsilon}(\pmb{x}_0)$ and $\nu>0$ such that for all $\pmb{x} \in \bar{B}_{\epsilon}(\pmb{x}_0)$,
\begin{eqnarray}\label{viscosity_contra_1}
\rho \phi(\pmb{x}_0)-\mathcal{L}\phi(\pmb{x}_0)-H(\pmb{x}_0) \le -\nu,
\end{eqnarray}
and for all $i=1,2,\cdots,N$,
\begin{eqnarray}\label{viscosity_contra_2}
-L_i K_i^{+}+\nu \leq \partial_{x^i}\phi \leq L_i K_i^{-}-\nu.
\end{eqnarray}

Given any admissible control $\pmb{\xi}$, consider the exit time $\tau_{\epsilon}=\inf \{t \ge 0, \pmb{X}_t \notin \bar{B}_{\epsilon}(\pmb{x}_0)\}$.
Applying It\^o's formula \citep[Theorem 21]{Meyer76} to $e^{-\rho t}\phi(\pmb{x})$ and any semi-martingale $\{\pmb{X}_t\}_{t \ge 0}$ under admissible control $(\xi^{i,+},\xi^{i,-})_{i=1}^N$ leads to
\begin{eqnarray*}
\mathbb{E}_{\pmb{x}_0}\left[e^{-\rho \tau_{\epsilon}}\phi(\pmb{X}_{\tau_{\epsilon}-}) \right] =&& \phi(\pmb{x}_0)+\mathbb{E}_{\pmb{x}_0} \left[\int_0^{\tau_{\epsilon}}e^{-\rho t}(-\rho \phi +\mathcal{L}\phi)(\pmb{X}_t)dt \right]\\
&+&\mathbb{E}_{\pmb{x}_0} \left[ \int_0^{\tau_{\epsilon}} e^{-\rho t} \sum_{i=1}^N \partial_{x^i}\phi(\pmb{X}_t)[(d\xi_t^{i,+})^c-(d\xi_t^{i,-})^c]\right]\\
&+&\mathbb{E}_{\pmb{x}_0} \left[\sum_{0\leq t <\tau_{\epsilon}} e^{-\rho t} [\phi(\pmb{X}_t)-\phi(\pmb{X}_{t-})]\right].
\end{eqnarray*}
Note that for all $0\leq t <\tau_{\epsilon}$, $\pmb{X}_t \in \bar{B}_{\epsilon}(\pmb{x}_0)$. Then, by \eqref{viscosity_contra_1}, and noting that $\Delta X_t^i = \Delta \xi_t^{i,+}-\Delta \xi_t^{i,-}$, we have for all $0\leq t <\tau_{\epsilon}$,
\[
\phi(\pmb{X}_t)-\phi(\pmb{X}_{t-})=\sum_{i=1}^N \Delta X_t^i \int_0^1 \partial_{x^i}\phi(\pmb{X}_t+z\Delta \pmb{X}_t)dz \leq \sum_{i=1}^N \left[(L_iK_i^--\nu)\Delta \xi_t^{i,+} +(L_iK_i^+-\nu)\Delta \xi_t^{i,-}\right].
\]
Similarly,
\begin{eqnarray}\label{viscosity_fc}
\phi(\pmb{X}_t)-\phi(\pmb{X}_{t-})\geq \sum_{i=1}^N \left[ -(L_iK_i^--\nu)\Delta \xi_t^{i,-} -(L_iK_i^+-\nu)\Delta \xi_t^{i,+}\right].
\end{eqnarray}
In light of relations \eqref{viscosity_contra_1}-\eqref{viscosity_fc},
\begin{eqnarray}\label{sup-big}
\mathbb{E}_{\pmb{x}_0}\left[e^{-\rho \tau_{\epsilon}}\phi(\pmb{X}_{\tau_{\epsilon}-}) \right] \geq \phi(\pmb{x}_0)&+&\mathbb{E}_{\pmb{x}_0} \left[\int_0^{\tau_{\epsilon}}e^{-\rho t}(-H+\nu)(\pmb{X}_t)dt \right]\nonumber\\
&+&\mathbb{E}_{\pmb{x}_0} \left[ \int_0^{\tau_{\epsilon}-} e^{-\rho t} \sum_{i=1}^N - (L_iK_i^{+}-\nu)d\xi_t^{i,+}- (L_iK_i^{-}-\nu)d\xi_t^{i,-}\right]\nonumber\\
=\phi(\pmb{x}_0)&-& \mathbb{E}_{\pmb{x}_0} \int_0^{\tau_{\epsilon}}e^{-\rho t}\left[H(\pmb{X}_t)dt +\sum_{i=1}^N L_iK_i^{+}d\xi_t^{i,+}+\sum_{i=1}^N L_iK_i^{-}d\xi_t^{i,-}\right]\nonumber\\
&+&\sum_{i=1}^N\left(\mathbb{E}_{\pmb{x}_0}\left[e^{-\rho \tau_{\epsilon}}L_iK_i^{+}\Delta \xi^{i,+}_{\tau_{\epsilon}}\right]+\mathbb{E}_{\pmb{x}_0}\left[e^{-\rho \tau_{\epsilon}}L_iK_i^{-}\Delta \xi^{i,-}_{\tau_{\epsilon}}\right]\right)\nonumber\\
&+&\nu \left\{\mathbb{E}_{\pmb{x}_0}\left[\int_0^{\tau_{\epsilon}}e^{-\rho t}dt \right]+\mathbb{E}_{\pmb{x}_0}\left[\int_0^{\tau_{\epsilon}-}e^{-\rho t}(d\xi_t^{i,+}+d\xi_t^{i,-})\right]\right\}.
\end{eqnarray}
Note that $\pmb{X}_{\tau_{\epsilon}-}\in \overline{B}_{\epsilon}(\pmb{x}_0)$, $\pmb{X}_{\tau_{\epsilon}}$ is either on the boundary $\partial B_{\epsilon}(\pmb{x}_0)$ or out of $\overline{B}_{\epsilon}(\pmb{x}_0)$. However, there is some random variable $\delta$ valued in $[0,1]$ such that
\[
\pmb{x}_{\delta} = \pmb{X}_{\tau_{\epsilon}-}+\delta \Delta \pmb{X}_{\tau_{\epsilon}} = \pmb{X}_{\tau_{\epsilon}-}+ \delta (\Delta \pmb{\xi}^{+}_{\tau_{\epsilon}}-\Delta \pmb{\xi}^{-}_{\tau_{\epsilon}})\in \partial \bar{B}_{\epsilon}(\pmb{x}_0).
\]
Then similar to \eqref{viscosity_fc}, we have
\begin{eqnarray}\label{sup-1}
\phi(\pmb{x}_{\delta})-\phi(\pmb{X}_{\tau_{\epsilon}-})\geq \delta \sum_{i=1}^N \left[ -(L_iK_i^--\nu)\Delta \xi_{\tau_{\epsilon}}^{i,-} -(L_iK_i^+-\nu)\Delta \xi_{\tau_{\epsilon}}^{i,+}\right].
\end{eqnarray}
Note that $\pmb{X}_{\tau_{\epsilon}}=\pmb{x}_{\delta}+(1-\delta)(\Delta \pmb{\xi}^{+}_{\tau_{\epsilon}}-\Delta \pmb{\xi}^{-}_{\tau_{\epsilon}})$, thus
\begin{eqnarray}\label{sup-2}
v(\pmb{x}_{\delta}) \leq (1-\delta)\sum_{i=1}^N\left(L_iK_i^{+}\Delta \xi_{\tau_{\epsilon}}^{i,+}+L_iK_i^{-}\Delta \xi_{\tau_{\epsilon}}^{i,-}\right)+v(\pmb{X}_{\tau_{\epsilon}}).
\end{eqnarray}
Recalling that $v(\pmb{x}_{\delta})\geq \phi(\pmb{x}_{\delta})$, inequalities \eqref{sup-1}-\eqref{sup-2} imply
\[
(1-\delta)\sum_{i=1}^N\left(L_iK_i^{+}\Delta \xi_{\tau_{\epsilon}}^{i,+}+L_iK_i^{-}\Delta \xi_{\tau_{\epsilon}}^{i,-}\right)+v(\pmb{X}_{\tau_{\epsilon}})\ge\phi(\pmb{X}_{\tau_{\epsilon}-})+ \delta \sum_{i=1}^N \left[ -(L_iK_i^--\nu)\Delta \xi_{\tau_{\epsilon}}^{i,-} -(L_iK_i^+-\nu)\Delta \xi_{\tau_{\epsilon}}^{i,+}\right].
\]
Therefore,
\[
\sum_{i=1}^N\Big((L_iK_i^{+}-\delta\nu)\Delta \xi_{\tau_{\epsilon}}^{i,+}+(L_iK_i^{-}-\delta\nu)\Delta \xi_{\tau_{\epsilon}}^{i,-}\Big) +v(\pmb{X}_{\tau_{\epsilon}})\ge\phi(\pmb{X}_{\tau_{\epsilon}-}).
\]
Plugging the last inequality into \eqref{sup-big}, along with $\phi(\pmb{x}_0)=v(\pmb{x}_0)$, yields
\begin{eqnarray*}
&&\mathbb{E}_{\pmb{x}_0}\quad e^{-{\rho}\tau_{\epsilon}}\left[\sum_{i=1}^N\left((L_iK_i^{+}-\delta\nu)\Delta \xi_{\tau_{\epsilon}}^{i,+}+(L_iK_i^{-}-\delta\nu)\Delta \xi_{\tau_{\epsilon}}^{i,-}\right) +v(\pmb{X}_{\tau_{\epsilon}})\right]\nonumber\\
&&\ge v(\pmb{x}_0)-\mathbb{E}_{\pmb{x}_0} \int_0^{\tau_{\epsilon}}e^{-\rho t}\left[H(\pmb{X}_t)dt +\sum_{i=1}^N L_iK_i^{+}d\xi_t^{i,+}+\sum_{i=1}^N L_iK_i^{-}d\xi_t^{i,-}\right]\nonumber\\
&+&\sum_{i=1}^N\left(\mathbb{E}_{\pmb{x}_0}\left[e^{-\rho \tau_{\epsilon}}L_iK_i^{+}\Delta \xi^{i,+}_{\tau_{\epsilon}}\right]+\mathbb{E}_{\pmb{x}_0}\left[e^{-\rho \tau_{\epsilon}}L_iK_i^{-}\Delta \xi^{i,-}_{\tau_{\epsilon}}\right]\right)\nonumber\\
&+&\nu \left\{\mathbb{E}_{\pmb{x}_0}\left[\int_0^{\tau_{\epsilon}}e^{-\rho t}dt \right]+\mathbb{E}_{\pmb{x}_0}\left[\int_0^{\tau_{\epsilon}-}e^{-\rho t}(d\xi_t^{i,+}+d\xi_t^{i,-})\right]\right\}.
\end{eqnarray*}
Hence
\begin{eqnarray*}
&&\mathbb{E}_{\pmb{x}_0}e^{-{\rho}\tau_{\epsilon}} v(\pmb{X}_{\tau_{\epsilon}}) +\mathbb{E}_{\pmb{x}_0} \int_0^{\tau_{\epsilon}}e^{-\rho t}\left[H(\pmb{X}_t)dt +\sum_{i=1}^N L_iK_i^{+}d\xi_t^{i,+}+\sum_{i=1}^N L_iK_i^{-}d\xi_t^{i,-}\right]\\
&\ge& v(\pmb{x}_0)+\nu \left\{\mathbb{E}_{\pmb{x}_0}\left[\int_0^{\tau_{\epsilon}}e^{-\rho t}dt \right]+\mathbb{E}_{\pmb{x}_0}\left[\int_0^{\tau_{\epsilon}-}e^{-\rho t}(d\xi_t^{i,+}+d\xi_t^{i,-})\right]+\delta\mathbb{E}_{\pmb{x}_0}\left[e^{-{\rho}\tau_{\epsilon}}\Delta \xi_{\tau_{\epsilon}}^{i,+}+e^{-{\rho}\tau_{\epsilon}}\Delta \xi_{\tau_{\epsilon}}^{i,-}\right]\right\}.
\end{eqnarray*}

We now claim that there exists a constant $g_0>0$ such that for all admissible control $\pmb{\xi}$,
\begin{eqnarray}\label{g0}
\mathbb{E}_{\pmb{x}_0}\left[\int_0^{\tau_{\epsilon}}e^{-\rho t}dt \right]+\mathbb{E}_{\pmb{x}_0}\left[\int_0^{\tau_{\epsilon}-}e^{-\rho t}(d\xi_t^{i,+}+d\xi_t^{i,-})\right]+\delta\mathbb{E}_{\pmb{x}_0}\left[e^{-{\rho}\tau_{\epsilon}}\Delta \xi_{\tau_{\epsilon}}^{i,+}+e^{-{\rho}\tau_{\epsilon}}\Delta \xi_{\tau_{\epsilon}}^{i,-}\right] \ge g_0.
\end{eqnarray}

\noindent Indeed, one can always find some constant $G_0$ such that the $\mathcal{C}^2$ function
\[
\psi(\pmb{x})=G_0((\pmb{x}-\pmb{x}_0)^2-\epsilon^2)
\]
satisfies
\begin{eqnarray*}
\begin{cases}
&\min_i\{\rho\psi-\mathcal{L}\psi+1,1-|\partial_{x^i}\psi|\} \ge 0, \,\,\mbox{ on }\,\, \overline{B}_{\epsilon}(\pmb{x}_0),\\
&\psi=0 \,\, \mbox{on} \,\, \partial \overline{B}_{\epsilon}(\pmb{x}_0).
\end{cases}
\end{eqnarray*}

Applying It\^o's formula  to $e^{-\rho t}\psi(\pmb{x})$ and any semi-martingale $\{\pmb{X}_t\}_{t \ge 0}$ under admissible control $(\xi^{i,+},\xi^{i,-})_{i=1}^N$ leads to
\begin{eqnarray}\label{sup-5}
\mathbb{E}_{\pmb{x}_0}\left[ e^{-\rho \tau_{\epsilon}}\psi(\pmb{X}_{\tau_{\epsilon}-})\right] \leq \psi(\pmb{x}_0)+\mathbb{E}_{\pmb{x}_0}\left[ \int_0^{\tau_{\epsilon}}e^{-\rho t}dt\right]+\sum_{i=1}^N\mathbb{E}_{\pmb{x}_0}\left[ \int_0^{\tau_{\epsilon}-}e^{-\rho t}(d \xi_t^{i,+}+d \xi_t^{i,-})\right].
\end{eqnarray}
Since $\partial_{x^i}\psi(\pmb{x}_0)\geq -1$ for all $i=1,2,\cdots,N$, 
\[
\psi(\pmb{X}_{\tau_{\epsilon}-})-\psi(\pmb{x}_{\delta }) \geq -\nabla \psi(\pmb{X}_{\tau_{\epsilon}-}-\pmb{x}_{\delta })\geq -\delta \sum_{i=1}^N\Delta \xi^{i,-}_{\tau_{\epsilon}},
\]
which, combined with \eqref{sup-5}, yields
\begin{eqnarray*}
&&\mathbb{E}_{\pmb{x}_0} \left[ \int_0^{\tau_{\epsilon}}e^{-\rho t}dt\right]+\sum_{i=1}^N\mathbb{E}_{\pmb{x}_0} \left[\int_0^{\tau_{\epsilon}-}(d\xi_t^{i,+}+d\xi_t^{i,-})\right]+\mathbb{E}_{\pmb{x}_0}\left[ e^{-\rho \tau_{\epsilon}}\delta\sum_{i=1}^N\Delta \xi^{i,-}_{\tau_{\epsilon}}\right] \\
&\ge & \mathbb{E}_{\pmb{x}_0} \left[e^{-\rho \tau_{\epsilon}}\psi(\pmb{x}_{\delta})\right] -\psi(\pmb{x}_0)=G_0 \epsilon^2.
\end{eqnarray*}
Hence \eqref{g0} holds with $g_0=G_0\epsilon^2$.
 \end{proof}

We can further show that  the value function is a   $\mathcal{W}^{2,\infty}_{\text{loc}}(\mathbb{R}^N)$ solution to the HJB equation  \eqref{po_hjb}. 
\begin{theorem}[Regularity]\label{prop:regularity_uniqueness}
Under Assumption {\bf{A3}}, the value function $v$ defined by \eqref{centrol_controller}   belongs to $\mathcal{W}^{2,\infty}_{{\rm loc}}(\mathbb{R}^N)$ and is a  solution to the HJB equation \eqref{po_hjb}. In addition,
\begin{eqnarray}\label{second_order_derivative_estimation}
    0 \leq  \partial ^2 _{\pmb{z}} v(\pmb{x}) \leq C, \,\,a.e.\,\, \text{ for } \pmb{x} \in \mathbb{R}^N,
\end{eqnarray}
with $C>0$ defined in Assumption {\bf A3}. Furthermore the continuation region 
\begin{eqnarray}\label{set_PO}
\mathcal{C}_N := \left\{\pmb{x} \given \beta(\nabla v(\pmb{x}))<1\right\}
\end{eqnarray}
is bounded and non-empty. In addition, we have $v\in \mathcal{C}^{4,\alpha}(\mathcal{C}_N)$.
\end{theorem}
Note that  $\mathcal{W}_{\rm loc}^{2,\infty}(\mathbb{R}^N)\subset \mathcal{C}^1(\mathbb{R}^N)$ by the Sobolev embedding (see Corollary 9.15 in Chapter 9 from \cite{brezis2010functional}).
{
\begin{remark}[Uniqueness]{\em
Our primary goal  is to identify and characterize Pareto optimal policies. To this end, it suffices to show that the value function of \eqref{centrol_controller} is in $\mathcal{W}^{2,\infty}_{{\rm loc}}(\mathbb{R}^N)$ and a convex solution to the HJB equation \eqref{po_hjb}. Uniqueness of the HJB solution, although not essential,  can be established by a verification argument  as discussed in Appendix \ref{app:verification}: the regularity, the convexity, and the bounded second-order derivative  of the value function allow to  apply an It\^o-Tanaka-Meyer Formula. }
\end{remark}
}
\begin{proof}
To prove \eqref{second_order_derivative_estimation}, let $\Delta_i(\eta):=(0,\cdots,0, \eta,0,\cdots,0)$ be the N-dimensional row vector with  the $i$-th entry being $\eta$ for $i=1,2,\cdots,N$. For any function $F: \mathbb{R}^N \rightarrow \mathbb{R}$, define the second difference of $F$ in the $x^i$ direction by
\begin{eqnarray} \label{second_order_difference}
 \delta^2_i \left(F,\pmb{x},\eta \right) := F\left( \pmb{x} + \Delta_i(\eta) \right) + F\left(\pmb{x}-\Delta_i(\eta)\right) - 2 F(\pmb{x}).
 \end{eqnarray}
 It is easy to check 
\begin{eqnarray} \label{2_diff_v}
\delta_i^2 (v,\pmb{x},\eta) \leq \sup \{\delta_i^2(J \left(\cdot;\pmb{\xi} \right),\pmb{x},\eta): \pmb{\xi} \in \mathcal{U}_{\pmb{x}}\}.
\end{eqnarray}
Since $H \in \mathcal{C}^2(\mathbb{R}^N)$,  for $\pmb{x} \in \mathbb{R}^N$,
\begin{eqnarray}\label{2_diff_h}
\delta_i^2 (H,\pmb{x},\eta) = (\eta)^2 \int_0^1 \int_{-\lambda}^{\lambda} \partial^2_{x^i} H (x^1,\ldots, x^i +\mu \eta, \ldots, x^N) d\mu d\lambda.
\end{eqnarray}
By Assumption \textbf{A3}, 
\begin{eqnarray} \label{2_diff_h_bound}
\delta_i^2 (H,\pmb{x},\eta) \leq C\,\eta^2 \int_0^1 \int_{-\lambda}^{\lambda} d\mu d\lambda = \eta^2  C.
\end{eqnarray}
Hence
\begin{eqnarray}\label{2_diff_v_2}
0 \leq \delta_i^2 (v,\pmb{x},\eta) \leq C  \eta^2, \,\,  \pmb{x} \in \mathbb{R}^N, |\eta| \leq 1.
\end{eqnarray}
The lower bound of (\ref{2_diff_v_2}) follows from the convexity of  $v$ by  Theorem \ref{thm:viscosity}.

To prove $v \in \mathcal{W}_{loc}^{2 , \infty}$, let ${G\subseteq \mathbb{R}^N}$ be any open ball and let $\psi \in C^{\infty}_0(\mathbb{R}^N)$ be any test function {such that ${\rm supp}(\psi) \subset G$.}
According to \eqref{2_diff_v_2}, we have
\[
|\eta^{-2} \delta_i^2 (v,\pmb{x},\eta)| \leq C \text{ for } \pmb{x}\in G  \text{ and } |\eta| \leq 1.
\]
Therefore by Theorem 1.1.2 in \citep{evans1990weak}, there is a sequence $\eta_k \rightarrow 0+$ as $k \rightarrow \infty$ such that, {denoting by $g_k(\pmb{x}):=\eta_k^{-2} \delta_i^2 (v,\pmb{x},\eta_k)$}, we have $g_k(\pmb{x}) \rightarrow Q$ weakly in $L^p (G)$ for some $p$ with $1<p < \infty$. It is then easy to see that 
\begin{eqnarray} 
\int_{\mathbb{R}^N} \psi (\pmb{x}) Q(\pmb{x}) d\pmb{x} = \int_{\mathbb{R}^N} \partial^2_{x^i} \psi v (\pmb{x}) d\pmb{x}, \quad \forall \psi \in \mathcal{C}_0^{\infty} (G),
\end{eqnarray}
where $Q = \partial^2_{x^i} v$. The existence and local boundedness of  second order  derivatives is now immediate: for $k=1,2,\ldots,N$, let $\pmb{e}_k$ denote the unit vector in the direction of the positive $x_k$ axis; for any fixed $i \neq j$ with $1 \leq i,j \leq N$, let $\pmb{y}$ be a new coordinate whose axis points to the $\frac{\pmb{e}_i +\pmb{e}_j}{\sqrt{2}}$ direction, then $\partial^2_{x_ix_j} v = \partial^2_{\pmb{y}} v - \frac{1}{2}(\partial_{x^i}^2v + \partial_{x^j}^2 v)$.

 Since $|{\partial_{x^i}} v(\pmb{x})| \leq L_i \max\{K_i^+,K_i^-\}$ ($i=1,2,\cdots,N$) on $\mathbb{R}^N$ but $H$ grows at least quadratically by Assumption {\bf A3}, $\mathcal{C}_N$ must be bounded.

Finally, let $G$ be any open ball such that $\overline{G}\in \mathcal{C}_N$. By Theorem 6.13 in \citep{GT2015}, the Dirichlet problem in $G$,
\begin{eqnarray} 
   \left\{
                \begin{array}{ll}
        \rho \tilde{v} - \mathcal{L} \tilde{v}  = H(\pmb{x}) , \qquad &\forall x \in G,\\
\tilde{v} = v,  \qquad &\forall x \in \partial G, \label{direchlet_boundary}       
                               \end{array}
              \right.
\end{eqnarray}
has a solution $\tilde{v} \in \mathcal{C}^0(\bar{G}) \cap \mathcal{C}^{2,\alpha}(G)$. In particular, $\tilde{v}-v \in \mathcal{W}^{2,\infty}(G)$, therefore by  (\ref{direchlet_boundary}),  $\tilde{v}-v \in \mathcal{W}^{1,2}_0(G)$. By Theorem 8.9 in \citep{GT2015}, $v = \tilde{v}$ in $G$, thus $v \in \mathcal{C}^{2,\alpha}(G)$. 
By Theorem 6.17 in \citep{GT2015}, $v \in \mathcal{C}^{4,\alpha}(G)$ thus $v \in  \mathcal{C}^{4,\alpha}(\mathcal{C}_N)$ for all $\alpha \in (0,1)$.

\end{proof}



\begin{remark}{\rm
The proof of Theorem \ref{prop:regularity_uniqueness} is inspired by the approach in  \citep[Theorem 4.5]{SS1989} {and \citep[Theorem 3.1]{WCM1994}}. In \citep{SS1989}, the following HJB equation \eqref{po_hjb_comparison}
(See Eqn. (3.1) in \citep{SS1989}) has been studied for an $N$-dimensional control problem  
\begin{eqnarray}\label{po_hjb_comparison}
\max \left\{  \rho u - \mathcal{L} u -H(\pmb{x}) ,  \sqrt{\sum_{i=1}^N(\partial_{x^i} u)^2}-1  \right\}=0.
\end{eqnarray}
Comparing the gradient constraints in \eqref{po_hjb_comparison} with  \eqref{po_hjb}, it is clear that the operator $\beta$ in \eqref{po_hjb} is less regular than $\|\nabla u\|_2$ in (\ref{po_hjb_comparison}) as  $\|\nabla u(\cdot)\|_2$ has  smoother and gradual changes in the state space $\mathbb{R}^N$.   In contrast, $\beta$ in \eqref{po_hjb} involves a maximum operator as a result of game interactions.

 The   HJB equation \eqref{po_hjb} has appeared  in \cite{MT1989} for analyzing the convergence of  finite variation controls. To our best knowledge, our  characterization of the optimal control and regularity results are novel.
}
\end{remark}

\section{Pareto-optimal policies}\label{sec:po_strategy}
The regularity analysis of the value function for problem (\ref{centrol_controller})  enables us to establish  the existence and the uniqueness of its  optimal control, for any given weight $(L_1, \cdots, L_N)$ such that $L_i>0$ and $\sum_{i=1}^N L_i=1$ (Section \ref{subsec:optimal_policy}). The optimal control in  \eqref{centrol_controller} is then shown to lead to a Pareto-optimal policy for game (\ref{N-game}) (Theorem \ref{connection_1}) for each choice of weights $(L_1, \cdots, L_N)$.

\subsection{Optimal policy for the regulator}\label{subsec:optimal_policy}
To  ensure the  uniqueness of the Pareto-optimal policy, we impose the following assumption on  the value function $v$. 
\begin{enumerate}[font=\bfseries,leftmargin=3\parindent]
\item [A4.]  The diagonal dominates the row/column in the  Hessian  matrix $\nabla^2 v$. That is, 
\begin{eqnarray}
\partial^2_{x^i}v(\pmb{x}) {>} \sum_{j \neq i}\left|\partial^2_{x^ix^j}{v}(\pmb{x})\right|, \forall i,=1,2,\cdots,N \,\,\mbox{ and }\,\,\pmb{x}\in \overline{\mathcal{C}}_N.
\end{eqnarray}
\end{enumerate}
Note that a similar assumption has been used in \citep[Assumption 3]{GMS2010} to analyze Nash equilibrium strategies. This assumption guarantees that the reflection direction of the Skorokhod problem is not parallel to the boundary, and that the controlled dynamics are continuous when $\pmb{x}\in \mathcal{C}_N$. Assumption {\bf A4} can be  relaxed using  techniques of  \cite{kruk2000} to deal with possible  jumps  at the reflection boundary.

Given this additional assumption and the regularity of the value function,  we are now ready to  characterize   the Pareto-optimal policy to game \eqref{N-game}. 

We shall show that when  $\pmb{x} \in \overline{\mathcal{C}}_N$, the optimal policy may be constructed by  solving a sequence of Skorokhod problems with  piecewise $\mathcal{C}^1$ boundaries,  then  passing to the limit of this sequence of $\epsilon$-optimal policies.
We shall also  show that the reflection field of the Skorokhod problem can be extended to the entire state space under appropriate conditions, completing the construction of  the Pareto-optimal policy when $\pmb{x}$ is outside $\overline{\mathcal{C}}_N$.

\paragraph{Optimal policy for $\pmb{x} \in \overline{\mathcal{C}}_N$.}
 \label{sec:thm1_proof2}

First, recall the definition of the Skorokhod problem in \citep{ramanan2006}.
\begin{definition}[Skorokhod problem]\label{def:skorokhod}
Let G be an open domain in $\mathbb{R}^N$ with $S = \partial G$.
Let $\Gamma(\pmb{a},b)=\{\pmb{x}\in \mathbb{R}^N : |\pmb{x}-\pmb{a}|=b\}$. To each point $\pmb{x} \in S$, we will associate a set $\pmb{r}(\pmb{x}) \subset\Gamma(\pmb{0},1)$ called the {\it directions of reflection}. 
We say that a continuous process 
\begin{eqnarray}
\pmb{\xi}_t = \int_0^t \pmb{N}_s  d \eta_s,
\end{eqnarray}
with $\eta_t = \bigvee_{[0,t]} \pmb{\xi}$ {the total variation up to time $t$}, is a solution to a Skorokhod problem with data $(\pmb{x}+\pmb{\mu}t + \pmb{\sigma} \pmb{B}_t,G, \pmb{r},\pmb{x})$ if
\begin{enumerate}[font=\bfseries,leftmargin=3\parindent]
\item[(a)] $|\pmb{N}_t|=1$, $\eta_t$ is continuous and nondecreasing;
\item[(b)] the process $\pmb{X}_t = \pmb{x}+\pmb{\mu}t + \pmb{\sigma} \pmb{B}_t +\int_0^t \pmb{N}_s d \eta_s$ satisfies $\pmb{X}_t \in \overline{G}$, $0\leq t <\infty$, a.s; 
\item[(c)] for every $0 \leq t <\infty$, 
\begin{eqnarray*}
\eta_t = \int_0^t \textbf{1} _{\left(\pmb{X}_{s} \in \partial G, \pmb{N}_s \in\pmb{r}(\pmb{X}_{s}) \right)} d \eta_s.
\end{eqnarray*}
\end{enumerate}
\end{definition}

{Now let us introduce some notations for the Skorokhod problem associated with  the continuation region $\mathcal{C}_N$ defined in \eqref{set_PO}. By definition, 
\begin{eqnarray}\label{set_PO_redefine}
\mathcal{C}_N = \left\{\pmb{x} \given \beta(\nabla v(\pmb{x}))<1\right\}=\cap_{j=1}^{2N} G_{j},
\end{eqnarray}
where for $i=1,2,\cdots,N$,
\begin{eqnarray}\label{G}
G_{i} = \{\pmb{x}\,\,\vert\,\, \partial_{x^i} v(\pmb{x}) < { L_i}K_i^{-} \},\qquad
G_{i+N} = \{\pmb{x}\,\,\vert\,\, \partial_{x^i} v(\pmb{x}) > -{ L_i}K_{i}^{+} \}.
\end{eqnarray}
Denote $\mathcal{S}=\partial \mathcal{C}_{N}$ as the boundary of $ \mathcal{C}_{N}$, denote
  $I(\pmb{x})=\left\{j\,\, \vert \,\, \pmb{x} \notin G_j ,\,\, j=1,2,\cdots,2N\right\}$ as the boundary that $\pmb{x}$ lies on, and 
define the vector field $\gamma_j$ on each face $G_j$ as
\begin{eqnarray}\label{eq:gamma_direction}
\gamma_{i} =-\pmb{e}_i,\qquad
\gamma_{i+N} =\pmb{e}_i,
\end{eqnarray}
where $\pmb{e}_i=(0,\cdots,0,1,0,\cdots,0)$ with the $i^{th}$ component being $1$. Then the directions of the reflection is defined as
\begin{eqnarray}\label{cone_original}
\pmb{r}(\pmb{x}) = \left\{\sum_{j \in I(\pmb{x})} c_j \gamma_j (\pmb{x})\,\, : \,\, c_i \geq 0 \,\mbox{ and }\, \left\| \sum_{j \in I(\pmb{x})} c_j \gamma_j(\pmb{x}) \right\| =1  \right\}.
\end{eqnarray}

}


\begin{theorem}[$\epsilon$-policy]\label{thm:epsilon_policy} Assume Assumptions {\bf A1-A4} and $\pmb{x} \in \mathcal{C}_N$. For any $\epsilon >0$, 
{there exist $\mathcal{C}_{\epsilon}\subseteq \mathcal{C}_N$ non-empty and $\pmb{r}_{\epsilon}$ such that the}
 unique solution to the Skorokhod problem with data $(\pmb{x}+\pmb{\mu}t+ \pmb{\sigma}\pmb{B}_t,\mathcal{C}_{\epsilon}, \pmb{r}_{\epsilon},\pmb{x})$ is an $\epsilon$-optimal (admissible) policy of the control problem \eqref{centrol_controller} with
\begin{eqnarray}\label{epsilon_control}
\pmb{\xi}_t^{\epsilon} = \int_0^t \pmb{N}_s^{\epsilon} \cdot d \eta_s^{\epsilon},
\end{eqnarray}
and $\pmb{N}^{\epsilon}_s {\in} \pmb{r}_{\epsilon}(\pmb{X}^{\epsilon}_s)$ on $\mathcal{S}_{\epsilon}$, where $\pmb{X}^{\epsilon}_t = \pmb{x}+\pmb{\mu}t+ \pmb{\sigma}\pmb{B}_t +\pmb{\xi}_t^{\epsilon}$. That is, {
\[
(1-C_0\epsilon)J(\pmb{x}, \pmb{\xi}^{\epsilon}) \le  v(\pmb{x}),
\]}
for some constant $C_0$ that is independent of $\epsilon$. Here $ \mathcal{C}_{\epsilon} \subseteq \mathcal{C}$ has piecewise smooth boundaries.
\end{theorem}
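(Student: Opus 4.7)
The plan is to construct a verification-type argument: build a smoothed domain $\mathcal{C}_\epsilon \subset \mathcal{C}_N$ whose boundary is piecewise $\mathcal{C}^1$, solve a Skorokhod problem on this domain to obtain a candidate control $\pmb{\xi}^\epsilon$, and then use the regularity of $v$ (Theorem~\ref{prop:regularity_uniqueness}) together with the HJB equation \eqref{po_hjb} to show that the associated cost $J(\pmb{x},\pmb{\xi}^\epsilon)$ is within $O(\epsilon)$ of $v(\pmb{x})$.

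First I would define the smoothed continuation set by perturbing each face: set $G_i^\epsilon := \{\partial_{x^i} v < L_i K_i^- - \epsilon\}$ and $G_{i+N}^\epsilon := \{\partial_{x^i} v > -L_i K_i^+ + \epsilon\}$, and put $\mathcal{C}_\epsilon := \cap_{j=1}^{2N} G_j^\epsilon$. Since $v \in \mathcal{C}^{4,\alpha}(\mathcal{C}_N)$, each face $\partial G_j^\epsilon \cap \overline{\mathcal{C}_\epsilon}$ is $\mathcal{C}^{3,\alpha}$, and Assumption~\textbf{A4} (diagonal dominance of $\nabla^2 v$) guarantees that the normal to each face has a nonzero component in the direction $\gamma_j$, so that the reflection vector $\gamma_j = \mp \pmb{e}_i$ is not tangent to $\partial G_j^\epsilon$. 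I then define $\pmb{r}_\epsilon(\pmb{x})$ on $\mathcal{S}_\epsilon = \partial \mathcal{C}_\epsilon$ as in \eqref{cone_original}, with the active-face index set taken with respect to $G_j^\epsilon$. Existence and uniqueness of the Skorokhod problem $(\pmb{x}+\pmb{\mu}t+\pmb{\sigma}\pmb{B}_t, \mathcal{C}_\epsilon, \pmb{r}_\epsilon, \pmb{x})$ then follows from the general theory of oblique reflection in domains with piecewise $\mathcal{C}^1$ boundary (e.g.\ Dupuis--Ishii / Ramanan), once one checks the standard compatibility condition for the reflection cone at corners; Assumption~\textbf{A4} is exactly what supplies this condition, because it forces the matrix whose columns are the $\gamma_j$ active at a corner to be a non-degenerate $\mathcal{P}$-matrix in the relevant sense. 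The resulting $\pmb{\xi}^\epsilon$ is continuous, adapted and of finite variation, and since the reflected process $\pmb{X}^\epsilon$ stays in the bounded set $\overline{\mathcal{C}_N}$, the exponentially-weighted total variation of $\pmb{\xi}^\epsilon$ is finite, verifying admissibility $\pmb{\xi}^\epsilon \in \mathcal{U}_N$.

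Next, I apply It\^o's formula to $e^{-\rho t} v(\pmb{X}^\epsilon_t)$ between $0$ and $T$. Inside $\mathcal{C}_\epsilon \subset \mathcal{C}_N$, the HJB equation gives $\rho v - \mathcal{L}v = H$, so the absolutely continuous part of the It\^o expansion contributes exactly $-\int_0^T e^{-\rho t} H(\pmb{X}^\epsilon_t)\,dt$. The singular part contributes $\int_0^T e^{-\rho t} \nabla v(\pmb{X}^\epsilon_t) \cdot \pmb{N}^\epsilon_s\,d\eta^\epsilon_s$, and on the face $\partial G_i^\epsilon$ the component $\partial_{x^i} v = L_i K_i^- - \epsilon$ while the reflection contributes $-d\xi_t^{i,-}$ (and analogously $+d\xi_t^{i,+}$ on $\partial G_{i+N}^\epsilon$). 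Hence, term-by-term comparison with the cost functional yields
\begin{equation*}
\mathbb{E}_{\pmb{x}}\!\left[e^{-\rho T} v(\pmb{X}^\epsilon_T)\right] = v(\pmb{x}) - \mathbb{E}_{\pmb{x}}\!\int_0^T e^{-\rho t}\!\Bigl[H(\pmb{X}^\epsilon_t)\,dt + \sum_{i=1}^N L_i K_i^+ d\xi_t^{i,+} + L_i K_i^- d\xi_t^{i,-}\Bigr] + \epsilon\,\mathbb{E}_{\pmb{x}}\!\int_0^T e^{-\rho t} d\eta^\epsilon_t .
\end{equation*}
Letting $T \to \infty$ and using that $v$ is bounded on $\overline{\mathcal{C}_N}$ (so $e^{-\rho T}\mathbb{E}[v(\pmb{X}^\epsilon_T)] \to 0$) gives $J(\pmb{x},\pmb{\xi}^\epsilon) = v(\pmb{x}) + \epsilon\, \mathbb{E}_{\pmb{x}}\!\int_0^\infty e^{-\rho t}d\eta^\epsilon_t$.

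Finally, I need a uniform-in-$\epsilon$ bound $\mathbb{E}_{\pmb{x}}\!\int_0^\infty e^{-\rho t} d\eta^\epsilon_t \leq C_0$. This is obtained by the same type of auxiliary-function argument used in the super-solution step of Theorem~\ref{thm:viscosity}: choose a $\mathcal{C}^2$ test function $\psi$ on $\overline{\mathcal{C}_N}$ satisfying $\rho\psi - \mathcal{L}\psi \geq 1$ and $|\partial_{x^i}\psi| \leq 1$ (which exists because $\mathcal{C}_N$ is bounded by Theorem~\ref{prop:regularity_uniqueness}), and apply It\^o's formula to $e^{-\rho t}\psi(\pmb{X}^\epsilon_t)$ to bound the total reflection time by a constant depending only on $\mathcal{C}_N$ and $\rho$. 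This yields $|J(\pmb{x},\pmb{\xi}^\epsilon) - v(\pmb{x})| \leq C_0 \epsilon$, as claimed.

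The main obstacle I anticipate is the well-posedness of the Skorokhod problem at corners of $\mathcal{C}_\epsilon$, where multiple faces of $\partial \mathcal{C}_N$ meet and the reflection cone $\pmb{r}_\epsilon$ is multi-valued. One must show that Assumption~\textbf{A4} forces the collection $\{\gamma_j\}_{j \in I(\pmb{x})}$ to satisfy the Harrison--Reiman / Dupuis--Ishii completely-$\mathcal{S}$ condition so that a pathwise-unique reflected process exists; verifying this amounts to checking that $\nabla^2 v$ being diagonally dominant implies the normals to the faces $\partial G_j$ lie in a suitable convex cone around the $\pm \pmb{e}_i$ axes, which is precisely the geometric content of \textbf{A4}. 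The remaining calculations are routine given the regularity established earlier.
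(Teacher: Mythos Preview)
Your proposal is essentially the same two-step argument as the paper's: (i) build a piecewise-smooth inner approximation $\mathcal{C}_\epsilon$ and invoke Dupuis--Ishii for well-posedness of the oblique Skorokhod problem, using \textbf{A4} to supply the corner condition; (ii) apply It\^o to $e^{-\rho t}v(\pmb{X}^\epsilon_t)$ on $\mathcal{C}_\epsilon\subset\mathcal{C}_N$, where $\rho v-\mathcal{L}v=H$, to compare $J(\pmb{x},\pmb{\xi}^\epsilon)$ with $v(\pmb{x})$.

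There is one genuine difference worth noting. You construct $\mathcal{C}_\epsilon$ by directly shifting the level sets $G_i^\epsilon=\{\partial_{x^i}v<L_iK_i^--\epsilon\}$, relying on $v\in\mathcal{C}^{4,\alpha}(\mathcal{C}_N)$ so that each face is already $\mathcal{C}^{3,\alpha}$. The paper instead mollifies first, setting $v^\delta=\phi^\delta*v$ and $G_i^{\epsilon}=\{\partial_{x^i}v^\delta<(1-2\epsilon)L_iK_i^-\}$, and then tracks the extra error $\|\nabla v^\delta-\nabla v\|<K_{\min}\epsilon$ (this is why a factor $(1-3\epsilon)$ appears in their It\^o estimate). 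Your construction is cleaner and fully justified: since $\overline{\mathcal{C}_\epsilon}$ is compactly contained in $\mathcal{C}_N$ and \textbf{A4} gives $\partial_{x^i}^2v>0$ there, the level sets of $\nabla v$ are genuine $\mathcal{C}^{3,\alpha}$ hypersurfaces with reflection transversal to the face; no mollification is needed.

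One small gap: your auxiliary-function argument for the uniform bound $\mathbb{E}_{\pmb{x}}\!\int_0^\infty e^{-\rho t}d\eta^\epsilon_t\le C_0$ does not work as written. The conditions $\rho\psi-\mathcal{L}\psi\ge 1$ and $|\partial_{x^i}\psi|\le 1$ you cite from the super-solution step yield a \emph{lower} bound on the combined $dt+d\eta$ integral, not an upper bound on the reflection alone. The bound you need is already contained in your It\^o identity: on $\mathcal{S}_\epsilon$ one has $-\nabla v(\pmb{X}^\epsilon)\cdot\pmb{N}^\epsilon\ge (K_{\min}-\epsilon)\|\pmb{N}^\epsilon\|_1\ge K_{\min}-\epsilon$, so together with $H\ge0$, $v\ge0$ the It\^o formula gives $(K_{\min}-\epsilon)\,\mathbb{E}_{\pmb{x}}\!\int_0^\infty e^{-\rho t}d\eta^\epsilon_t\le v(\pmb{x})$, which is uniform for $\epsilon<K_{\min}/2$. (The paper itself asserts this bound without details.) Also, your displayed equality $J=v+\epsilon\!\int e^{-\rho t}d\eta^\epsilon$ is only correct up to the factor $\|\pmb{N}^\epsilon\|_1\in[1,\sqrt{N}]$, but the $O(\epsilon)$ conclusion is unaffected.
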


\begin{proof} The proof consists of two steps. We first construct an approximation $\mathcal {C}_{\epsilon}$ of  $\mathcal{C}_N$ with piecewise smooth boundaries. Clearly, if $\partial \mathcal {C}_N$ {itself is piecewise smooth}, the $\mathcal {C}_{\epsilon} =\mathcal{C}_N$. We then show that the solution to the Skorokhod problem  with piecewise smooth boundary provides an $\epsilon$-policy to the  control problem \eqref{centrol_controller}.

\paragraph{Step 1: Skorokhod problem with piecewise smooth boundary.}
Let $\phi^{\delta}(\pmb{x}) \in {C}^{\infty}(\mathbb{R}^N,\mathbb{R}_+)$ be such that $\phi^{\delta}(\pmb{x}) = 0$ for $|\pmb{x}| \geq \delta$ and 
\begin{eqnarray}\label{smooth_function}
\int_{\mathbb{R}^N} \phi^{\delta} (\pmb{x}) d \pmb{x} =1.
\end{eqnarray}
Since $v \in \mathcal{W}_{loc}^{2,\infty}(\mathbb{R}^N)$, consider a regularization of $v(\pmb{x})$ via $\phi^{\epsilon}$ such that
\begin{eqnarray}\label{smooth_function-v}
v^{\delta}(\pmb{x}) := \phi^{\delta} * v(\pmb{x}).
\end{eqnarray}
{Similarly define $H^{\delta}(\pmb{x}) := \phi^{\delta} * H(\pmb{x})$.} The boundedness of $v$, $\nabla v$, $D^2 v$  on $B_R(0)$, with $\overline{\mathcal{C}}_N \subset B_{R-1}(\pmb{0})$, implies that $H^{\delta}$ and $v^{\delta}$ are bounded uniformly on $\overline{\mathcal{C}}_N$ for $\delta<1$, and
\begin{eqnarray*}
v^{\delta} \rightarrow v, \quad \nabla v^{\delta} \rightarrow \nabla v, \quad H^{\delta} \rightarrow H \quad \mbox{ uniformly in } \overline{\mathcal{C}}_N.
\end{eqnarray*}
Denote $K_{\max}=\max_{i=1,2,\cdots,N}\{{ L_i}K_i^{+},{ L_i}K_i^{-}\}$, $K_{\min}=\min_{i=1,2,\cdots,N}\{{ L_i}K_i^{+},{ L_i}K_i^{-}\}$ and recall $C$ in \eqref{second_order_derivative_estimation} such that  $0 \leq \partial ^2 _{ \pmb{z}} v(\pmb{x}) \leq C$ for any second order directional derivative $\partial ^2 _{ \pmb{z}}$.
Then, for any $\epsilon_k\in (0,\frac{1}{4})$, there exists $\delta_k:=\delta_k(\epsilon_k) \in \left(0,\frac{\epsilon_k K_{\min}}{C}\right)$ such that for all $\delta \in [0,\delta_k]$, $\|\nabla v^{\delta}-\nabla v\|_{1}<K_{\min}\epsilon_k$.
Take a non-negative and non-increasing sequence $\{\epsilon_k\}_k$ such that $\lim_{k \rightarrow \infty}\epsilon_k=0$. Denote $w^{\delta_k}(\pmb{x}) = \beta(\nabla v^{\delta_k}(\pmb{x}))$ and $\mathcal{C}_{\epsilon_k} := \{\pmb{x}\,\, \vert \,\,w^{\delta_k}(\pmb{x}) < 1-2\epsilon_k\}= \cap_{j=1}^{2N} G_{j}^{\epsilon_k} $, where $i=1,2,\cdots,N$,
\begin{eqnarray}\label{G_epsilon}
G_{i}^{\epsilon_k} &=& \{\pmb{x}\,\,\vert\,\, \partial_{x^i} v^{\delta_k}(\pmb{x}) < (1-2\epsilon_k){ L_i}K_i^{-} \},\nonumber\\
G_{i+N}^{\epsilon_k} &=& \{\pmb{x}\,\,\vert\,\,\partial_{x^i}  v^{\delta_k}(\pmb{x}) > (-1+2\epsilon_k){ L_i}K_i^{+} \}.
\end{eqnarray}
Since $\|\nabla v^{\delta_k}-\nabla v\|_{1}<K_{\min}\epsilon_k$ in $\mathcal{C}_N$ and by the definition in \eqref{G_epsilon}, we have $\mathcal{C}_{\epsilon_k} \subset \mathcal{C}_N$.

{First, let us show $\mathcal{C}_{\epsilon_k}$ is non-empty when $\epsilon_k\in (0,\frac{1}{4})$. We claim that $v$ attains its minimum in $\mathcal{C}_N$. To see this, let $\theta\in\left(0,\frac{K_{\rm min}}{2}\right)$ be given, and choose $\pmb{x}^{\theta}\in \mathbb{R}^N$ such that
$v(\pmb{x}^{\theta}) \leq v(\pmb{x})+\theta, \quad \forall \pmb{x}\in \mathbb{R}^N$. Define
$
\psi_{\theta}(\pmb{x}) := v(\pmb{x}) +\theta \|\pmb{x}^{\theta}-\pmb{x}\|^2$ for all  $\pmb{x}\in \mathbb{R}^N$,
and note that $\psi_{\theta}$ attains its minimum over $\mathbb{R}^N$ at some point $\pmb{y}^{\theta}$. In particular,
\begin{eqnarray}\label{eq:gradient-0}
    0 = \nabla \psi_{\theta}(\pmb{y}^{\theta}) = \nabla v (\pmb{y}^{\theta})+2\theta (\pmb{y}^{\theta}-\pmb{x}^{\theta}).
\end{eqnarray}
But also
\[
v(\pmb{y}^{\theta}) +\theta \|\pmb{y}^{\theta}-\pmb{x}^{\theta}\|^2 = \psi_{\theta}(\pmb{y}^{\theta})\leq \psi_{\theta}(\pmb{x}^{\theta}) = v(\pmb{x}^{\theta}) \leq v(\pmb{y}^{\theta})+\theta.
\]
It follows that $\|\pmb{x}^{\theta}-\pmb{y}^{\theta}\|\leq 1$. Returning to \eqref{eq:gradient-0}, we have $\|\nabla v(\pmb{y}^{\theta})\|^2 \leq 4\theta^2< K^2_{\rm min}.$ Hence $|\partial_{x^i}v(\pmb{y}^{\theta})| < K_{\rm min}$ $(i=1,2,\cdots,N)$ and $\pmb{y}^{\theta} \in \mathcal{C}_N$ for all $\theta \in (0,\frac{K_{\rm min}}{2})$. Since $\mathcal{C}_N$ is bounded, there exists a sequence $\theta_k\in (0,\frac{K_{\rm min}}{2})$ with $\lim_{k\rightarrow \infty}\theta_k = 0$ such that  $\lim_{k\rightarrow \infty}\pmb{y}^{\theta_k} =\pmb{y}^0$ for some $\pmb{y}^0\in \overline{\mathcal{C}}_N$.  From \eqref{eq:gradient-0} we have $\nabla v(\pmb{y}^0)=0$ and hence $\pmb{y}^0\in {\mathcal{C}}_N$. In addition,
the convexity of $v$ implies that $v$  attains its minimum at $\pmb{y}^0$. We now show that $B(\pmb{y}^0,\frac{K_{\min}}{4C})\subseteq \mathcal{C}_{\epsilon_k}$ for all $\epsilon_k \in (0,\frac{1}{4})$.
For any $ \pmb{x} \in B(\pmb{y}^0,\frac{1}{K_{\max}})$ and $i=1,2,\cdots,N$,
\[
|\partial_{x^i}v^{\delta_k}(\pmb{x})| \leq |\partial_{x^i}v(\pmb{x} )|+K_{\min}\epsilon \leq K\|\pmb{x}-\pmb{y}^0\|+K_{\min}\epsilon \leq \frac{1}{2}K_{\min}.
\]
The first inequality holds by the definition of $\delta_k$ and the second inequality holds since $\|\nabla^2 v\|\leq C$. By definition of $\mathcal{C}_{\epsilon_k}$ in \eqref{G_epsilon}, we have $B(\pmb{y}^0,\frac{K_{\min}}{4C})\subseteq \mathcal{C}_{\epsilon_k}$ and hence $\mathcal{C}_{\epsilon_k}\neq \emptyset$ holds for all $\epsilon_k \in (0,\frac{1}{4})$. }

Also notice that $\partial{G}_{j}^{\epsilon_k} \cap \overline{\mathcal{C}}_{\epsilon_k} \in \mathcal{C}^2$ because $v^{\delta_k}$ is smooth.
Now, take any $\epsilon=\epsilon_l$ from the sequence $\{\epsilon_k\}_k$ and take $\delta \in [0,\delta_{l}]$, and denote $\mathcal{S}_{\epsilon}=\partial \mathcal{C}_{\epsilon}$ as the boundary of $ \mathcal{C}_{\epsilon}$ and  $I_{\epsilon}(\pmb{x})=\left\{j\,\, \vert \,\, \pmb{x} \notin G_j^{\epsilon} ,\,\, j=1,2,\cdots,2N\right\}$.
Define the vector field $\gamma_j$ on each face $G_j^{\epsilon}$ as \eqref{eq:gamma_direction} and 
the directions of reflection by
\begin{eqnarray}\label{cone}
\pmb{r}_{\epsilon}(\pmb{x}) = \left\{\sum_{j \in I_{\epsilon}(\pmb{x})} c_j \gamma_j (\pmb{x})\,\, : \,\, c_i \geq 0 \,\mbox{ and }\, \left\| \sum_{j \in I_{\epsilon}(\pmb{x})} c_j \gamma_j(\pmb{x}) \right\| =1  \right\}.
\end{eqnarray}
When $\epsilon=0$, denote $I(\pmb{x}):=I_{0}(\pmb{x})$ and $\pmb{r}(\pmb{x}) := \pmb{r}_0(\pmb{x})$ for the index set and reflection cone of region $\mathcal{C}_N$, respectively. Then define the normal direction on face $G_{j}^{\epsilon}$  as $n^{\epsilon}_j$ ($j=1,2,\cdots,2N$) with
\begin{eqnarray*}
n^{\epsilon}_i=-\frac{\nabla (\partial_{x^i} v^{\delta})}{\|\nabla (\partial_{x^i} v^{\delta})\|_2},&&
n^{\epsilon}_{i+N}=\frac{\nabla (\partial_{x^i} v^{\delta})}{\|\nabla (\partial_{x^i} v^{\delta})\|_2}, \qquad i=1,2,\cdots,N.
\end{eqnarray*}
Note that the normal direction $n^{\epsilon}_j$ $(j=1,2,\cdots,2N)$ is well-defined by the construction of \eqref{G_epsilon}.  

Next we show that $n^{\epsilon}_{i} \cdot \gamma_i=\frac{\partial^2_{x^i}v^{\delta}}{\|\nabla(\partial_{x^i} v^{\delta})\|_2}>0$  and $n^{\epsilon}_{i+N} \cdot \gamma_{i+N}=\frac{\partial^2_{x^i}v^{\delta}}{\|\nabla(\partial_{x^i} v^{\delta})\|_2}>0$ for $i=1,2,\cdots,N$. 

To do so, we shall show that $B_{\delta}(\pmb{x})\in \mathcal{C}_N$ for $\pmb{x}\in \mathcal{S}_{\epsilon}$. Note that
$(-1+2\epsilon) L_i K_{i}^{+}\leq \partial_{x^i}v(\pmb{x})\leq (1-2\epsilon)L_i K_{i}^{-}$ for $\pmb{x} \in \bar{\mathcal{C}}_{\epsilon}$. For any $\pmb{y}\in B_{\delta}(\pmb{x})$, $| \partial_{x^i}v(\pmb{x})-  \partial_{x^i}v(\pmb{y})|\leq C \|\pmb{x}-\pmb{y}\|\leq C \delta\leq \epsilon K_{\min}$. Therefore, 
$(-1+\epsilon) { L_i}K_{i}^{+} \leq (-1+2\epsilon)L_iK_{i}^{+}-\epsilon K_{\min}\leq  \partial_{x^i} v(\pmb{y})\leq (1-2\epsilon)L_iK_{i}^{-}+\epsilon K_{\min} \leq (1-\epsilon){ L_i}K_{i}^{-}. $
Thus, $\pmb{y}\in \mathcal{C}_N$ for all $\pmb{y} \in B_{\delta}(\pmb{x}) $ and $\pmb{x}\in \mathcal{S}_{\epsilon}$. Moreover, under {Assumption \textbf{A4}}, $ \partial^2_{x^i}v^{\delta}(\pmb{x})=\int_{\pmb{y}\in B_{\delta}(\pmb{x})} \partial^2_{x^i}v(\pmb{y})\phi^{\delta}(\pmb{x}-\pmb{y})d\pmb{y}> 0$ for all $\pmb{x}\in \mathcal{S}_{\epsilon}$.

Furthermore, at each point $\pmb{x}\in S_{\epsilon}$, there exists $\gamma \in \pmb{r}_{\epsilon}(\pmb{x})$ pointing into $\mathcal{C}_{\epsilon}$. This is because
there is no $\pmb{x}\in \partial \mathcal{C}_{\epsilon}$ such that $\{i,i+N\} \in I_{\epsilon}(\pmb{x})$ for any $i=1,2,\cdots,N$, and this implies $|I_{\epsilon}(\pmb{x})|\leq N$ for all $\pmb{x}\in \partial \mathcal{C}_{\epsilon}$. Now Assumption \textbf{A4} implies the  following condition (3.8) in~\citep{DI1993}: the existence of scalars $b_j \geq 0$ $j\in I_{\epsilon}(\pmb{x})$, such that
\begin{eqnarray*}
b_j \left<\gamma_j(\pmb{x}),n_j(\pmb{x}) \right> > \sum_{k \in I_{\epsilon(\pmb{x})\setminus\{i\}}} b_k \left|\left<\gamma_k(\pmb{x}),n_k(\pmb{x}) \right>\right|. 
\end{eqnarray*}
{ Here we can simply take $b_j=1$ for all $j \in I_{\epsilon}(\pmb{x})$.}
Therefore, by Theorem 4.8 and Corollary 5.2 of \citep{DI1993}, there exists a unique strong solution  to the Skorokhod problem with data $\left(\{\pmb{x}+\pmb{\mu}t+\pmb{\sigma}B_t\}_{t \ge 0},\mathcal{C}_{\epsilon}, \pmb{r}_{\epsilon},\pmb{x}\right)$.

\paragraph{Step 2. $\epsilon$-optimal policy.} Now we shall show that  the solution to the Skorokhod problem with data $(\pmb{x}+\pmb{\mu}t+ \pmb{\sigma}\pmb{B}_t,\mathcal{C}_{\epsilon}, \pmb{r}_{\epsilon},\pmb{x})$ is an $\epsilon$-optimal policy of the control problem \eqref{centrol_controller} with
\begin{eqnarray}\label{epsilon_control}
\pmb{\xi}_t^{\epsilon} = \int_0^t \pmb{N}_s^{\epsilon} \cdot d \eta_s^{\epsilon},
\end{eqnarray}
and $\pmb{N}^{\epsilon}_s \in \pmb{r}_{\epsilon}(\pmb{X}^{\epsilon}_s)$ on $\mathcal{S}_{\epsilon}$, with $\pmb{X}^{\epsilon}_t =\pmb{x}+\pmb{\mu}t +  \pmb{\sigma}\pmb{B}_t+\pmb{\xi}_t^{\epsilon}$.
By Theorem 4.8 of \citep{DI1993}, $\pmb{X}^{\epsilon}$ is a continuous process. Since $v\in \mathcal{C}^{4,\alpha}(\mathcal{C}_N)$, applying  It\^o formula to the  semi-martingale $\pmb{X}^{\epsilon}$ yields
\begin{eqnarray}\label{eq:inter_33}
v(\pmb{x}) &=& \mathbb{E}_{\pmb{x}} \int_0^{\infty} e^{-\rho t}\left[  H(\pmb{X}^{\epsilon}_t)dt+  \nabla v(\pmb{X}_t^{\epsilon})\cdot \pmb{N}_t^{\epsilon}d{\eta}_t^{\epsilon} \right]\nonumber\\
&\geq & \mathbb{E}_{\pmb{x}} \int_0^{\infty}e^{-\rho t} \left[  H(\pmb{X}^{\epsilon}_t)dt+(1-3\epsilon)\left[(\pmb{N}_t^{\epsilon})^+ \cdot \pmb{K_{L}^+}+(\pmb{N}_t^{\epsilon})^- \cdot \pmb{K_{L}^+} \right]d\eta_t^{\epsilon} \right]\nonumber\\
&\geq & (1-3\epsilon)\mathbb{E}_{\pmb{x}} \int_0^{\infty}e^{-\rho t} \left[  H(\pmb{X}^{\epsilon}_t)dt+\left[(\pmb{N}_t^{\epsilon})^+ \cdot \pmb{K_{L}^+}+(\pmb{N}_t^{\epsilon})^- \cdot \pmb{K_{L}^+} \right]d\eta_t^{\epsilon} \right]\nonumber\\
& = & (1-3\epsilon) J (\pmb{x}; \pmb{\xi}^{\epsilon})
\end{eqnarray}
where $\pmb{N}^{\epsilon}(\pmb{x}) \in \pmb{r}_{\epsilon}(\pmb{x})$ on $\mathcal{S}_{\epsilon}$, 
\begin{eqnarray}\label{eq:K_alpha}
\pmb{K_{L}^+}:=(L_1K_1^{+},\cdots,L_N K_N^{+}),\,\,\, \pmb{K_{L}^-}:=(L_1K_1^{-},\cdots,L_N K_N^{-}),\,\,\, {\rm and } \,\,\,K_{\max} = \max_{1\leq i \leq N}\{L_i K_i^{+},L_i K_i^{-}\}.
\end{eqnarray}
The first inequality of \eqref{eq:inter_33} holds since $\|\nabla v^{\delta}-\nabla v\|_{L_1}<K_{\min}\epsilon$ for $\delta \in [0,\delta(\epsilon)]$ and  \eqref{G_epsilon}. The second inequality of \eqref{eq:inter_33} holds since $H(\pmb{x})\ge 0$.

\end{proof}

Now we are ready to establish the main theorem when $\pmb{x} \in \overline{\mathcal{C}}_N$.

\begin{theorem}[Existence and uniqueness of optimal control]\label{thm:skorokhod}
Take $\pmb{x} \in \overline{\mathcal{C}}_N$ and assume  \textbf{A1}- {\textbf{A4}}.
Then there exists a unique optimal control $\pmb{\xi}^*$ to problem (\ref{centrol_controller}), which is a solution to the Skorokhod problem 
(\ref{def:skorokhod})  with data $( \pmb{x}+\pmb{\mu}t + \pmb{\sigma} \pmb{B}_t, \mathcal{C}_N,\pmb{r},\pmb{x})$ such that $\pmb{X}_t^{*}\in\overline{\mathcal{C}}_N$ under control $\pmb{\xi}^*$. 
\end{theorem}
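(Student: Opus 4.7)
The overall strategy is to construct $\pmb{\xi}^*$ as the limit of the $\epsilon$-optimal policies $\pmb{\xi}^{\epsilon_k}$ from Theorem \ref{thm:epsilon_policy} and then to identify this limit as a solution of the Skorokhod problem on $\mathcal{C}_N$ itself. The hard part is passing to the limit in the reflection field, since $\partial \mathcal{C}_N$ is only piecewise regular and the active-face index set can jump at corners. The plan has four steps.

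First, I would fix a sequence $\epsilon_k \downarrow 0$ and collect the approximate policies $\pmb{\xi}^{\epsilon_k}$ given by Theorem \ref{thm:epsilon_policy}, which satisfy $J(\pmb{x}; \pmb{\xi}^{\epsilon_k}) \leq v(\pmb{x}) + C_0 \epsilon_k$. Combined with the moment bound \eqref{bound2} of Proposition \ref{prop:estimation}, this yields uniform control on $\mathbb{E}_{\pmb{x}} \int_0^\infty e^{-\rho t} \|\pmb{\xi}_t^{\epsilon_k}\|^2 dt$ and on $\mathbb{E}_{\pmb{x}} \int_0^\infty e^{-\rho t} d\eta_t^{\epsilon_k}$, which in turn controls the total variation on any compact time interval. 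Since each $\pmb{X}^{\epsilon_k}$ is driven by the same Brownian motion, standard tightness arguments (Aldous's criterion in the Skorokhod $J_1$ topology) allow extraction of a subsequence converging in law; by Skorokhod representation one may pass to a common probability space with almost sure convergence. Call the limit $(\pmb{\xi}^*, \pmb{X}^*)$.

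Second, I would identify $(\pmb{\xi}^*, \pmb{X}^*)$ as a solution of the Skorokhod problem with data $(\pmb{x} + \pmb{\mu}t + \pmb{\sigma}\pmb{B}_t, \mathcal{C}_N, \pmb{r}, \pmb{x})$. By the construction of $\mathcal{C}_\epsilon$ via the mollified gradient, $\mathcal{C}_{\epsilon_k} \subset \mathcal{C}_N$ and $\mathcal{C}_{\epsilon_k} \nearrow \mathcal{C}_N$ as $\epsilon_k \downarrow 0$, so $\pmb{X}_t^* \in \overline{\mathcal{C}}_N$ for all $t$. The reflection vectors $\gamma_j = \mp \pmb{e}_i$ do not depend on $\epsilon$; only the active index set $I_{\epsilon_k}(\pmb{x})$ does, and since $\nabla v^{\delta_k} \to \nabla v$ uniformly on $\overline{\mathcal{C}}_N$, one obtains $I_{\epsilon_k}(\pmb{X}_t^{\epsilon_k}) \subseteq I(\pmb{X}_t^*)$ in the limit whenever $\pmb{X}_t^* \in \mathcal{S}$. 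Assumption \textbf{A4}, used as in Step 1 of the proof of Theorem \ref{thm:epsilon_policy}, ensures condition (3.8) of Dupuis-Ishii holds on the limit reflection field, so that the Skorokhod problem on $\mathcal{C}_N$ admits a unique strong solution with which $(\pmb{\xi}^*, \pmb{X}^*)$ must coincide. By Proposition \ref{prop:property_optimal_policy} the reflection measure charges only $\mathcal{S}$ with direction in $\pmb{r}(\pmb{X}_t^*)$, as required.

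Third, for optimality, Assumption \textbf{A2}, the uniform second-moment bound on $\pmb{X}^{\epsilon_k}$, and the dominated convergence theorem give $\mathbb{E}_{\pmb{x}} \int_0^\infty e^{-\rho t} H(\pmb{X}_t^{\epsilon_k}) dt \to \mathbb{E}_{\pmb{x}} \int_0^\infty e^{-\rho t} H(\pmb{X}_t^*) dt$, while Fatou's lemma applied to the control-cost integrals yields
\begin{eqnarray*}
J(\pmb{x}; \pmb{\xi}^*) \leq \liminf_{k \to \infty} J(\pmb{x}; \pmb{\xi}^{\epsilon_k}) \leq \liminf_{k \to \infty} \bigl(v(\pmb{x}) + C_0 \epsilon_k\bigr) = v(\pmb{x}),
\end{eqnarray*}
so $\pmb{\xi}^*$ is optimal. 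For uniqueness, I would use the joint convexity of $J$ in $(\pmb{x}, \pmb{\xi})$ from \eqref{convexity_J} together with the strict convexity of $H$ from Assumption \textbf{A3}: if $\pmb{\xi}^{*,1}, \pmb{\xi}^{*,2}$ were two distinct optima, the midpoint $\tfrac{1}{2}(\pmb{\xi}^{*,1} + \pmb{\xi}^{*,2})$ would still be admissible, the corresponding state $\tfrac{1}{2}(\pmb{X}^{*,1} + \pmb{X}^{*,2})$ would differ from $\pmb{X}^{*,1}$ on a set of positive $dt \otimes d\mathbb{P}$ measure, and $\partial_{\pmb{z}}^2 H \geq c > 0$ would force a strict drop in $J$ at the midpoint, contradicting optimality. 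The main obstacle remains Step 2, where one must verify Hausdorff continuity of the reflection cones $\pmb{r}_{\epsilon_k} \to \pmb{r}$ at corners of $\mathcal{C}_N$ and rule out, via \textbf{A4}, any limiting reflection direction tangent to $\mathcal{S}$.
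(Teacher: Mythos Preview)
Your overall architecture --- take limits of the $\epsilon$-optimal policies from Theorem \ref{thm:epsilon_policy}, prove optimality of the limit, and deduce uniqueness from the strict convexity of $H$ --- matches the paper's, and your uniqueness argument is essentially identical. However, Step 2 contains a genuine gap.

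You try to invoke condition (3.8) of Dupuis--Ishii on the \emph{limit} domain $\mathcal{C}_N$ in order to conclude that the Skorokhod problem on $\mathcal{C}_N$ admits a unique strong solution, with which $(\pmb{\xi}^*,\pmb{X}^*)$ must then coincide. This is not justified: the Dupuis--Ishii theorems require each face to be $\mathcal{C}^1$, but since $v$ is only known to be in $\mathcal{W}^{2,\infty}_{\mathrm{loc}}$ (Theorem \ref{prop:regularity_uniqueness}), the faces $\partial G_j=\{\partial_{x^i}v=\pm L_iK_i^{\pm}\}$ of $\mathcal{C}_N$ need not have this regularity. Indeed, the entire purpose of the mollification producing $\mathcal{C}_\epsilon$ in Theorem \ref{thm:epsilon_policy} is precisely to manufacture faces that \emph{are} smooth enough for Dupuis--Ishii; that structure cannot simply be passed to the limit. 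Your closing worry about ``Hausdorff continuity of the reflection cones at corners'' is thus not a technicality to be checked but a symptom of an approach that does not close.

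The paper sidesteps this entirely: it never asserts well-posedness of the Skorokhod problem on $\mathcal{C}_N$. Instead it obtains existence of an optimal control and $\mathrm{Leb}\otimes\mathbb{P}$-convergence $\pmb{\xi}^{\epsilon_{k'}}\to\pmb{\xi}^*$ directly from (a modification of) Theorem 4.5 and Corollary 4.11 of \cite{MT1989}, so that $\pmb{\xi}^*$ is already optimal and adapted on the \emph{original} filtered space (your weak-convergence/Skorokhod-representation route loses this). Then the three defining properties of Definition \ref{def:skorokhod} are verified for $\pmb{\xi}^*$ one at a time: property (b) from the limit $\pmb{X}^{\epsilon_k}\in\overline{\mathcal{C}}_{\epsilon_k}\subset\overline{\mathcal{C}}_N$; property (c) from Proposition \ref{prop:property_optimal_policy}, which applies because $\pmb{\xi}^*$ is already known to be optimal; and property (a) from Assumption \textbf{A4}, which rules out tangential reflection and hence jumps. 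Note the logical order: optimality must come \emph{before} the appeal to Proposition \ref{prop:property_optimal_policy}, whereas you place the Skorokhod identification first. Reordering your steps and dropping the unjustified Dupuis--Ishii claim on $\mathcal{C}_N$ --- no limiting argument on the reflection cones is needed at all --- would bring your proof in line with the paper's.
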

\begin{proof}
\noindent {\bf Step 1: Optimality.} {
{The existence of the optimal control to problem \eqref{centrol_controller}
follows from Theorem 4.5 and Corollary 4.11 in \citep{MT1989}.} According to Corollary 4.11 of \cite{MT1989}, if $(\pmb{N}^{\epsilon_k},{\eta}^{\epsilon_k})$ is a sequence of $\epsilon_k-$optimal policies for $\pmb{x}$ and $\lim_{k \rightarrow \infty}\epsilon_k \rightarrow 0$, then one can extract a subsequence  $\epsilon_{k^{\prime}}$ such that
\begin{eqnarray}\label{convergence_epsilon_control}
\pmb{\xi}_t^{\epsilon_{k^{\prime}}}(\omega) = \int_0^t \pmb{N}_s^{\epsilon_{k^{\prime}}}(\omega) d \eta_s^{\epsilon_{k^{\prime}}}(\omega) \mathop{\rightarrow}^{k\to\infty} {\pmb{\xi}}^*_t(\omega)\quad dt \times d\mathbb{P} -a.e.
\end{eqnarray}
where $\pmb{\xi}^*$, defined in \eqref{convergence_epsilon_control}, is optimal, i.e., $\pmb{\xi}^* \in \arg \min_{\pmb{\xi}\in \mathcal{U}_N} J(\pmb{x};\pmb{\xi})$.} By the analysis in Theorem \ref{thm:epsilon_policy}, there exits  a sequence of $\epsilon_k-$optimal policy and $\epsilon_k \rightarrow 0$ when $k \rightarrow \infty$. Therefore, the optimal control exists, which is the limit of $\pmb{\xi}_t^{\epsilon_{k^{\prime}}}(\omega)$ defined in \eqref{convergence_epsilon_control}.



\noindent {\bf Step 2: Skorokhod condition.}  We next show that the limiting control ${\pmb{\xi}^*}$ in \eqref{convergence_epsilon_control} is a solution to the Skorokhod problem (\ref{def:skorokhod})  with data $( \pmb{x}+\pmb{\mu}t + \pmb{\sigma} \pmb{B}_t, \mathcal{C}_N,\pmb{r},\pmb{x})$ such that $\pmb{X}_t^{*}\in\overline{\mathcal{C}}_N$.
Let us first check Property {\bf (b)} of the Skorokhod problem (Definition \ref{def:skorokhod}).
Denote
$$A = \left\{ \omega \given \pmb{X}_t^{\epsilon_{k^{\prime}}}  (\omega) \in \overline{\mathcal{C}}_{\epsilon_{k^{\prime}}} \mbox{ for all } 0 \leq t <\infty \mbox{ and all } {k^{\prime}} \geq 0\right\},$$
then by definition (\ref{epsilon_control}), $P(A)=1$. Also define
$$B = \left\{ \omega \given \pmb{X}_t^{\epsilon_{k^{\prime}}} \rightarrow  \pmb{X}_t \mbox{ a.e. } {\rm Leb} \mbox{ on } [0,\infty) \right\},$$
then by (\ref{convergence_epsilon_control}),  $P(B)=1$. For all $\omega \in A \cap B$, since $\overline{\mathcal{C}}_N$ is closed,
$$\pmb{X}_t(\omega) \in \overline{\mathcal{C}}_N \,\,\mbox{Leb a.e. on} \,\,[0,\infty).$$

Now we check property {\bf (c)} in Definition \ref{def:skorokhod}, i.e., the optimal policy  acts only on $\partial \mathcal{C}_N$, and its reflection direction is in $\pmb{r}(\pmb{x})$.
 
Take the smooth function $\phi^{\epsilon}$ in \eqref{smooth_function} and the smooth version of value function $v^{\epsilon}$ in \eqref{smooth_function-v}.
 Let $H^{\epsilon}(\pmb{x}) =  \phi^{\epsilon}* H(\pmb{x})$. From the HJB Equation (\ref{po_hjb}),
\begin{eqnarray}\label{eq:property_HJB}
\rho v -\mathcal{L}v \leq H, \quad \beta(\nabla v) \leq 1 \mbox{ in } \mathbb{R}^N,
\end{eqnarray}
and
\begin{eqnarray} \label{property_smooth_fun}
\rho v^{\epsilon} -\mathcal{L}v^{\epsilon} \leq H^{\epsilon}, \quad \beta(\nabla v^{\epsilon}) \leq 1 \mbox{ in } \mathbb{R}^N.
\end{eqnarray}
{To see this, take $\pmb{x}\in \mathbb{R}^N$. By \eqref{eq:property_HJB},
\[
  \rho v^{\epsilon}(\pmb{x}) -\mathcal{L}v^{\epsilon}(\pmb{x}) = \int_{B(0,\epsilon)} \phi^{\epsilon}(\pmb{y})[\rho(\pmb{x}-\pmb{y})-\mathcal{L}v(\pmb{x}-\pmb{y})]d\pmb{y} \leq \int_{B(0,\epsilon)} \phi^{\epsilon}(\pmb{y})H(\pmb{x}-\pmb{y})d\pmb{y}=H^{\epsilon}(\pmb{x}),\]
  where $B(0,\epsilon) = \{\pmb{x}\in \mathbb{R}^N : \|\pmb{x}\| \leq \epsilon\}$.
  For any $i=1,2,\cdots,N$ and $\pmb{x}\in \mathbb{R}^N$, by \eqref{eq:property_HJB} we have 
  \begin{eqnarray*}
  \partial_{x^i} v^{\epsilon}(\pmb{x}) &=&\partial_{x^i} \left( \int_{B(0,\epsilon)} \phi^{\epsilon}(\pmb{y}) v(\pmb{x}-\pmb{y})d\pmb{y}\right)=\int_{B(0,\epsilon)} \phi^{\epsilon}(\pmb{y})\partial_{x^i}v(\pmb{x}-\pmb{y})d\pmb{y}\\
  &\leq&  \int_{B(0,\epsilon)} \phi^{\epsilon}(\pmb{y})L_iK^{-}_{i}d\pmb{y} = L_iK^{-}_{i}. 
\end{eqnarray*}
Similarly $-L_iK^{+}_{i} \leq  \partial_{x^i} v^{\epsilon}(\pmb{x})$ holds for all $i=1,2,\cdots,N$ and $\pmb{x}\in \mathbb{R}^N$. Hence $\beta(\nabla v^{\epsilon}) \leq 1 \mbox{ in } \mathbb{R}^N$.
}

Letting $T>0$ and applying the It\^o formula \citep[Theorem 21]{Meyer76} to $e^{-\rho t}v^{\epsilon}(\pmb{x})$ and the semi-martingale $\{\pmb{X}_t\}_{t \ge 0}$ under any admissible control $(\xi^{i,+},\xi^{i,-})_{i=1}^N$ yields
\begin{eqnarray*}
\mathbb{E}_{\pmb{x}} \left[e^{-\rho t}v^{\epsilon}({\pmb{X}}_T)\right] =&\quad & v^{\epsilon}(\pmb{x}) + \mathbb{E}_{\pmb{x}} \int_0^T  e^{-\rho t} \left( \mathcal{L} v^{\epsilon}-\rho  v^{\epsilon}\right)({\pmb{X}}_t)dt \\
\quad &+& \mathbb{E}_{\pmb{x}} \int_0^T  e^{-\rho t}  \nabla v^{\epsilon} ({\pmb{X}}_t) \cdot d \pmb{\xi}_t\\
\quad &+&  \mathbb{E}_{\pmb{x}} \int_0^T  \sum_{ 0 \leq t <T}e^{-\rho t} (v^{\epsilon} ({\pmb{X}}_t) -v^{\epsilon} ({\pmb{X}}_{t-}) -\nabla v^{\epsilon}\cdot({\pmb{X}}_t) (\pmb{\xi}_t-\pmb{\xi}_{t-} )),
\end{eqnarray*}
with the last term coming from the jumps of ${\pmb{X}}_t$.
By (\ref{property_smooth_fun}),
\begin{eqnarray}\label{relation}
\begin{aligned}
&\quad&\mathbb{E}_{\pmb{x}} \left[e^{-\rho T}v^{\epsilon}({\pmb{X}}_T)\right]   + \mathbb{E}_{\pmb{x}} \int_0^T  e^{-\rho t} H^{\epsilon}({\pmb{X}}_t)dt 
-\mathbb{E}_{\pmb{x}} \int_0^T  e^{-\rho t}  \nabla v^{\epsilon} ({\pmb{X}}_t) \cdot d{\pmb{\xi}}_t \\
&\quad& \quad +  \mathbb{E}_{\pmb{x}} \int_0^T  \sum_{ 0 \leq t <T}e^{-\rho t} (-v^{\epsilon} ({\pmb{X}}_t) +v^{\epsilon} ({\pmb{X}}_{t-}) +\nabla v^{\epsilon}({\pmb{X}}_t) \cdot (\pmb{\xi}_t-\pmb{\xi}_{t-}) ) \geq v^{\epsilon}(\pmb{x}).
\end{aligned}
\end{eqnarray}
 Moreover, $H^{\epsilon}$, $v^{\epsilon}$ are bounded uniformly on $\overline{\mathcal{C}}_N$ for $\epsilon<1$ because $v$, $\nabla v$, $D^2 v$ are bounded on $B(0,R)$, with $  \overline{\mathcal{C}}_N \subset B(0,R-1)$, thus
\begin{eqnarray*}
v^{\epsilon} \rightarrow v, \quad \nabla v^{\epsilon} \rightarrow \nabla v, \quad H^{\epsilon} \rightarrow H \quad \mbox{ uniformly in } \overline{\mathcal{C}}_N.
\end{eqnarray*}
Meanwhile, for $\forall \pmb{x} \in \overline{\mathcal{C}}_N$,
\begin{eqnarray}\label{value_function_expression}
v({\pmb{x}}) = \mathbb{E}_{\pmb{x}} \int_0^{\infty} e^{-\rho t}\left[ H(\pmb{X}^*_t)dt + \left[({\pmb{N}}_t^*)^+ \cdot \pmb{K_{L}^{+}} + ({\pmb{N}}_t^*)^- \cdot \pmb{K_{L}^{-}} \right] d \eta_t^*      \right],
\end{eqnarray}
where ${\pmb{X}}_t^* = {\pmb{x}} + \pmb{\mu}t +\pmb{\sigma}{\pmb{B}}_t + \pmb{\xi}_t^{*}$ with $\pmb{\xi}_t^{*}:= \int_0^t {\pmb{N}}_s^* d \eta_s^*$ the optimal control, and $\pmb{K_{L}^{+}}$ and $\pmb{K_{L}^{-}}$ are defined in \eqref{eq:K_alpha}. In particular,
\begin{eqnarray}\label{bounded_control}
 \mathbb{E}_{\pmb{x}} \int_0^{\infty} e^{-\rho t} d \eta^*_t < \infty,
\end{eqnarray}
which leads to
\begin{eqnarray*}
 \mathbb{E}_{\pmb{x}} \int_0^{T} e^{-\rho t} \left[({\pmb{N}}_t^*)^+ \cdot \pmb{K^{+}_{L}}+ ({\pmb{N}}_t^*)^- \cdot \pmb{K^{-}_{L}} \right] d \eta^*_t < \infty.
\end{eqnarray*}
By the bounded convergence theorem and (\ref{relation}),
\begin{eqnarray}\label{relation_limit}
\begin{aligned}
&\quad&\mathbb{E}_{\pmb{x}} \left[e^{-\rho T}v(\pmb{X}^*_T)\right]   + \mathbb{E}_{\pmb{x}} \int_0^T  e^{-\rho t} H(\pmb{X}^*_t)dt 
-\mathbb{E}_{\pmb{x}} \int_0^T  e^{-\rho t}  \nabla v (\pmb{X}^*_t) \cdot \pmb{N}^*_t d \eta^*_t\\
&\quad& \quad +  \mathbb{E}_{\pmb{x}} \int_0^T  \sum_{ 0 \leq t <T}e^{-\rho t} \left(-v (\pmb{X}^*_t) +v^{\epsilon} (\pmb{X}^*_{t-}) +\nabla v(\pmb{X}^*_t) \cdot \pmb{N}_t^*(\eta^*_t-\eta^*_{t-} )\right) \geq v(\pmb{x}).
\end{aligned}
\end{eqnarray}
The last term on the left-hand side is nonpositive because of convexity of $v$, hence
\begin{eqnarray*}
\begin{aligned}
&\quad&\mathbb{E}_{\pmb{x}} \left[e^{-\rho T}v(\pmb{X}^*_T)\right]   + \mathbb{E}_{\pmb{x}} \int_0^T  e^{-\rho t} H(\pmb{X}^*_t)dt 
-\mathbb{E}_{\pmb{x}} \int_0^T  e^{-\rho t}  \nabla v (\pmb{X}^*_t) \cdot \pmb{N}^*_t d \eta^*_t \geq v(\pmb{x}).
\end{aligned}
\end{eqnarray*}
Letting $T \rightarrow \infty$, by the boundedness of ${\pmb{X}}_t^*$, $\beta(\nabla v) \leq 1$, $|{\pmb{N}}_t^*|=1$, and (\ref{bounded_control}), 
\begin{eqnarray*}
\begin{aligned}
 \mathbb{E}_{\pmb{x}} \int_0^{\infty}  e^{-\rho t} H(\pmb{X}^*_t)dt 
-\mathbb{E}_{\pmb{x}} \int_0^{\infty}  e^{-\rho t}  \nabla v (\pmb{X}^*_t) \cdot \pmb{N}^*_t d \eta^*_t \geq v(\pmb{x}).
\end{aligned}
\end{eqnarray*}
Along with (\ref{value_function_expression}), we have
\begin{eqnarray*}
0 \geq \mathbb{E}_{\pmb{x}} \int_0^{\infty} e^{-\rho t} \left( \left[\nabla v({\pmb{X}}_t^*) + \pmb{K_{L}^+}\right] \cdot ({\pmb{N}}_t^*)^+ d \eta^*_t+ \left[-\nabla v({\pmb{X}}_t^*) + \pmb{K_{L}^-} \right]\cdot ({\pmb{N}}_t^*)^-  d \eta^*_t  \right).
\end{eqnarray*}
Given $\beta(\nabla v) \leq 1$, we have
$-K^{+}_i\leq v_{x^i}(\pmb{x}) \leq K^{-}_i$, $\forall x \in \mathbb{R}^N$  and $ i=1,2,\cdots,N.$
Hence
\begin{eqnarray*}
0 \geq \mathbb{E}_{\pmb{x}} \int_0^{\infty} e^{-\rho t} \left( \left[\nabla v({\pmb{X}}_t^*) + \pmb{K_{L}^+}\right] \cdot ({\pmb{N}}_t^*)^+ d \eta^*_t+ \left[-\nabla v({\pmb{X}}_t^*) + \pmb{K_{L}^-} \right]\cdot ({\pmb{N}}_t^*)^-  d \eta^*_t  \right)\geq 0.
\end{eqnarray*}
This implies
$d \eta_t^* =0$ when $\beta(\nabla v({\pmb{X}}_t^*))<1$ a.e. in $t$. Also, when $d \eta^*_t \neq 0$, ${\pmb{N}}_t^*(\pmb{x}) \in \pmb{r}(\pmb{x})$ for $\pmb{x} \in \mathcal{S}$ a.e. for  $t\in[0,\infty)$, where the reflection cone $\pmb{r}(\pmb{x})$ is defined in \eqref{cone_original}.

By Assumption {\bf A4}, for any $\pmb{x} \in \partial\mathcal{C}_N$ and $\gamma(\pmb{x}) \in \pmb{r}(\pmb{x})$,  $\gamma(\pmb{x})$ is not parallel to $\partial\mathcal{C}_N$ at $\pmb{x}$. Hence, property {\bf(a)} holds, i.e., the optimal control is continuous.

{\bf Step 3: Uniqueness.} It remains to show the uniqueness of the optimal control. This is done by a contradiction argument. Suppose that there are two optimal controls $\{\pmb{\xi}^*\}_{t \geq 0}$ and $\{\pmb{\xi}^{**}\}_{t \geq 0}$ such that  $\pmb{\xi}^* \neq \pmb{\xi}^{**}$ almost surely. 
Let $\{ \pmb{X}_t^* \}_{t \geq 0}$ and $\{\pmb{X}_t^{**}\}_{t \geq 0}$ be the corresponding trajectories. Let $\pmb{\xi}_t = \frac{\pmb{\xi}^*_t+\pmb{\xi}_t^{**}}{2}$ and $\pmb{X}_t = \frac{\pmb{X}^*_t+\pmb{X}_t^{**}}{2}$. Then by Assumption \textbf{A3},
\begin{eqnarray*}
v(\pmb{x}) - J(\pmb{x};\pmb{\xi}_t) &=& \frac{(J(\pmb{x}; \pmb{\xi}^*)+J(\pmb{x}; \pmb{\xi}^{**}))}{2}  - J (x;\pmb{\xi})\\
&\geq& \mathbb{E}_{\pmb{x}} \int_0^{\infty} e^{-\rho t} \left[ \frac{H(\pmb{X}_t^*)+H(\pmb{X}_t^{**})}{2} - H\left(\frac{\pmb{X}_t^*+\pmb{X}_t^{**}}{2}\right) \right]dt>0.
\end{eqnarray*}
Therefore $v(\pmb{x}) > J(\pmb{x}; \pmb{\xi})$, which contradicts the optimality of $\{\pmb{\xi}_t^*\}_{t \geq 0}$ and $\{\pmb{\xi}_t^{**}\}_{t \geq 0}$. Hence the uniqueness of the optimal control.
\end{proof}

\paragraph{Optimal policy for $\pmb{x} \notin  \overline{\mathcal{C}}_N$.}
 \label{sec:thm1_proof3} 
 When $\pmb{x} \notin \overline{\mathcal{C}}_N$, the optimal policy is to jump immediately to some point $\hat{\pmb{x}} \in \overline{\mathcal{C}}_N$ and then follows the optimal policy in $\overline{\mathcal{C}}_N$.
We will need the following assumption so that the reflection field of the Skorokhod problem is extendable to the $\mathbb{R}^N$ plane \citep{DI1991}. Note that when $N=2$,  \textbf{A5} follows directly from Assumptions \textbf{A1}-\textbf{A3}.
  
 \begin{enumerate}[font=\bfseries,leftmargin=3\parindent]
\item [A5.]
 There is a map $\pi: \mathbb{R}^N \rightarrow \mathcal{C}_N$ satisfying $\pi(\pmb{x}) = \pmb{x}$ for all $\pmb{x} \in \mathcal{C}_N$ and $\pi(\pmb{x}) -\pmb{x} \in \pmb{r}(\pi(\pmb{x}))$.
\end{enumerate}
 This assumption was also adopted in \citep[Assumption 3.1]{DI1991}.
\begin{theorem}\label{epsilon_out}
Given \textbf{A1}-\textbf{A3}, and \textbf{A5}. For any $\pmb{x}\notin \overline{\mathcal{C}}_N$, there exists {an optimal policy $\pi$ such that} $\pi(\pmb{x}) \in \partial \mathcal{C}_N$ at time $0$ and 
$$v(\pmb{x}) = v(\pi(\pmb{x}))+l(\pmb{x}-\pi(\pmb{x})),$$
with $l(\pmb{y}) = \sum_i l_i(y_i)$, where 
\begin{eqnarray} \label{l_function}
    l_{i}(y_i)=\left\{
                \begin{array}{ll}
                 L_iK_i^- y_i, \quad \mbox{ if } y_i \geq 0,\\
                 -L_iK^+_i y_i, \quad \mbox{ if } y_i <0.
                \end{array}
              \right.
\end{eqnarray}
\end{theorem}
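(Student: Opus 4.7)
The plan is to verify the identity $v(\pmb{x}) = v(\pi(\pmb{x})) + l(\pmb{x}-\pi(\pmb{x}))$ by sandwiching $v(\pmb{x})$ between matching upper and lower bounds: the upper bound comes from exhibiting a specific admissible policy that achieves exactly this cost, while the lower bound comes from combining convexity of $v$ with the structure of the reflection cone $\pmb{r}$ at $\pi(\pmb{x})$.

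First, I would construct the candidate optimal policy. At $t=0$, apply an impulse that decomposes $\pi(\pmb{x})-\pmb{x}$ into its coordinate-wise positive and negative parts: $\Delta\xi^{i,+}_{0} = (\pi(\pmb{x})_i - x_i)^{+}$ and $\Delta\xi^{i,-}_{0} = (\pi(\pmb{x})_i - x_i)^{-}$. This drives the state from $\pmb{x}$ to $\pi(\pmb{x})\in\overline{\mathcal{C}}_N$ at an impulse cost of $\sum_i[L_i K_i^{+}\Delta\xi^{i,+}_0 + L_i K_i^{-}\Delta\xi^{i,-}_0] = l(\pmb{x}-\pi(\pmb{x}))$, directly from the definition of $l_i$ in \eqref{l_function}. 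For $t>0$ I concatenate with the optimal Skorokhod reflection policy starting from $\pi(\pmb{x})$ guaranteed by Theorem~\ref{thm:skorokhod}, whose cost is $v(\pi(\pmb{x}))$. Hence $J(\pmb{x};\pmb{\xi}^{*}) = v(\pi(\pmb{x})) + l(\pmb{x}-\pi(\pmb{x}))$, giving the upper bound $v(\pmb{x}) \leq v(\pi(\pmb{x})) + l(\pmb{x}-\pi(\pmb{x}))$.

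For the reverse inequality, I would invoke the convexity of $v$ established in Theorem~\ref{thm:viscosity} together with the $\mathcal{W}^{2,\infty}_{\mathrm{loc}}$ regularity of Theorem~\ref{prop:regularity_uniqueness}, which in particular makes $v$ continuously differentiable. The subgradient inequality then yields
\[
v(\pmb{x}) \geq v(\pi(\pmb{x})) + \nabla v(\pi(\pmb{x}))\cdot(\pmb{x}-\pi(\pmb{x})).
\]
The key computation is to identify this directional derivative with $l(\pmb{x}-\pi(\pmb{x}))$. By Assumption~\textbf{A5}, $\pi(\pmb{x})-\pmb{x} = \sum_{j\in I(\pi(\pmb{x}))} c_j\gamma_j$ with $c_j\geq 0$, and since $\gamma_i = -\pmb{e}_i$, $\gamma_{i+N} = \pmb{e}_i$, only coordinates in the active set contribute. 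Moreover, since $\pi(\pmb{x})\in\partial G_j$ for every $j\in I(\pi(\pmb{x}))$, the definition \eqref{G} forces $\partial_{x^i}v(\pi(\pmb{x})) = L_i K_i^{-}$ when $i\in I$ and $\partial_{x^i}v(\pi(\pmb{x})) = -L_i K_i^{+}$ when $i+N\in I$. A coordinate-wise computation then gives
\[
\nabla v(\pi(\pmb{x}))\cdot(\pmb{x}-\pi(\pmb{x})) = \sum_{i\in I} L_i K_i^{-} c_i + \sum_{i+N\in I} L_i K_i^{+} c_{i+N} = l(\pmb{x}-\pi(\pmb{x})).
\]

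Combining both inequalities yields the required identity, and simultaneously certifies optimality of the constructed policy. The main obstacle I anticipate is the careful geometric bookkeeping tying the active index set $I(\pi(\pmb{x}))$, the structure of the reflection cone $\pmb{r}$, and the gradient values on each face $\partial G_j$ together so that the directional derivative collapses exactly to $l(\pmb{x}-\pi(\pmb{x}))$; a secondary concern is the regularity of $\nabla v$ at $\partial\mathcal{C}_N$, which is handled by the $\mathcal{W}^{2,\infty}_{\mathrm{loc}}$ conclusion of Theorem~\ref{prop:regularity_uniqueness} (if one preferred to avoid pointwise differentiability one could use any element of the subdifferential, which inherits the same extremal-value structure from the HJB gradient constraint $\beta(\nabla v)\leq 1$).
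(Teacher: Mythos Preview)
Your proposal is correct and follows essentially the same sandwich argument as the paper: the upper bound comes from the sub-optimality of the jump-then-continue policy, the lower bound from convexity, and the two are identified via the reflection-cone structure forcing $\partial_{x^i}v(\pi(\pmb{x}))$ to take its extremal value on each active face. One minor remark: the upper bound does not actually require Theorem~\ref{thm:skorokhod} (and hence \textbf{A4}); it follows directly from dynamic programming using $\epsilon$-optimal continuations from $\pi(\pmb{x})$, which is consistent with the theorem's hypotheses \textbf{A1}--\textbf{A3}, \textbf{A5}.
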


\begin{proof}
Notice that $l(\pmb{y})$ is  convex  and
\begin{eqnarray*}
l_i(y_i) = \max_{-L_iK_i^+ \leq k \leq L_iK_i^-} \{k y_i\} = \max\{ -L_iK_i^+y_i,L_iK_i^-y_i\} \mbox{ for } y_i \in \mathbb{R}.
\end{eqnarray*}

\noindent Here we define two linear approximations which correspond to the lower and the upper bounds of the value function $v(\pmb{x})$, respectively.

For ${\pmb{x}}\not\in  \overline{\mathcal{C}}_N$,  define
\begin{eqnarray}\label{pi1_and_pi2}
 u_1(\pmb{x}) &=& v(\pi({\pmb{x}})) + \nabla v (\pi{(\pmb{x})})\cdot  (\pmb{x}-\pi{(\pmb{x})}),\nonumber \\
 u_2(\pmb{x}) &=& v(\pi({\pmb{x}})) + l(\pmb{x}-\pi({\pmb{x}})).
 \end{eqnarray}
Then $u_{2}(\pmb{x})  \geq v (\pmb{x})$ by the sub-optimality of the policy, and 
 $u_{1}(\pmb{x}) \leq v (\pmb{x})$ by convexity. Thus,
 \begin{eqnarray}\label{sandwich_value_fun}
 u_{1}(\pmb{x}) \leq v (\pmb{x}) \leq u_{2}(\pmb{x}).
 \end{eqnarray}
 
 We now show $u_{1}(\pmb{x})= u_{2}(\pmb{x})$. By Assumption \textbf{ A5}, $u_1$ and $u_2$ in (\ref{pi1_and_pi2}) can be rewritten as
\begin{eqnarray*}
 u_1(\pmb{x}) &=& v(\pi({\pmb{x}})) + \nabla v (\pi({\pmb{x}})) \cdot d (\pi({\pmb{x}})) \|\pmb{x}-\pi({\pmb{x}})\|,\nonumber \\
u_2(\pmb{x}) &=& v(\pi({\pmb{x}})) + \pmb{P} (\pi({\pmb{x}}))\cdot d  (\pi({\pmb{x}}))\|\pmb{x}-\pi({\pmb{x}})\|,
\end{eqnarray*}
where $d (\pi({\pmb{x}})) \in \pmb{r} (\pi({\pmb{x}}))$ and $\pmb{P}(\pmb{x}) = (P_1,\cdots,P_N)(\pmb{x})$, with $${P}_i(\pmb{x}) =L_i K_i^+ \textbf{1}(\partial_{x^i} v(\pmb{x})<0) +  L_i K_i^- \textbf{1}(\partial_{x^i} v(\pmb{x})>0).$$
Therefore $u_1(\pmb{x})=u_2(\pmb{x})$. 
\end{proof}

\subsection{Pareto-optimal policies}
Pareto-optimal policies for \eqref{N-game} may  be constructed from the  optimal control for problem  \eqref{centrol_controller} as described below.
\begin{theorem}\label{connection_1}
The optimal control for the regulator's problem (\ref{centrol_controller})  yields a Pareto-optimal policy for the game (\ref{N-game}).
\end{theorem}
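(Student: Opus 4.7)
The plan is a short contradiction argument exploiting the fact that the regulator's objective is a strictly positive convex combination of the players' objectives. This is the standard scalarization principle from multi-objective optimization: any minimizer of a positively weighted sum of objectives is Pareto optimal with respect to those objectives.

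First I would recall the setup: by Theorem \ref{thm:skorokhod}, the regulator's problem \eqref{centrol_controller} admits a (unique) optimal control $\pmb{\xi}^* \in \mathcal{U}_N$ satisfying
\[
J(\pmb{x};\pmb{\xi}^*) = \sum_{i=1}^N L_i \, J^i(\pmb{x};\pmb{\xi}^*) \;\le\; \sum_{i=1}^N L_i \, J^i(\pmb{x};\pmb{\xi}) = J(\pmb{x};\pmb{\xi}), \qquad \forall \pmb{\xi} \in \mathcal{U}_N,
\]
where the weights satisfy $L_i > 0$ for every $i$ and $\sum_i L_i = 1$.

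Next I argue by contradiction. Suppose $\pmb{\xi}^*$ is not Pareto optimal for the $N$-player game \eqref{N-game}. Then by the Definition of Pareto optimality there exists an admissible $\pmb{\xi} \in \mathcal{U}_N$ such that $J^i(\pmb{x};\pmb{\xi}) \le J^i(\pmb{x};\pmb{\xi}^*)$ for every $i \in \{1,\ldots,N\}$, with a strict inequality $J^j(\pmb{x};\pmb{\xi}) < J^j(\pmb{x};\pmb{\xi}^*)$ for at least one index $j$. Multiplying the $i$-th inequality by $L_i > 0$ and summing gives
\[
\sum_{i=1}^N L_i J^i(\pmb{x};\pmb{\xi}) \;<\; \sum_{i=1}^N L_i J^i(\pmb{x};\pmb{\xi}^*),
\]
where strictness is preserved because $L_j > 0$. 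In other words $J(\pmb{x};\pmb{\xi}) < J(\pmb{x};\pmb{\xi}^*)$, which contradicts the optimality of $\pmb{\xi}^*$ for \eqref{centrol_controller}.

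There is essentially no hard step here; the entire argument is two lines once one has the existence of the regulator's optimizer in hand, which was the real work done in Theorems \ref{thm:viscosity}--\ref{thm:skorokhod}. The only thing to be careful about is that the strict positivity of all weights $L_i$ is used in an essential way to turn a single strict inequality $J^j(\pmb{x};\pmb{\xi}) < J^j(\pmb{x};\pmb{\xi}^*)$ into a strict inequality for the aggregate $J$; this is exactly why Assumption \eqref{H} requires $L_i > 0$ for every $i$ rather than merely $L_i \ge 0$.
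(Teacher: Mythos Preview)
Your proof is correct and is essentially the same scalarization argument as the paper's: the paper also observes that $\sum_i L_i J^i(\pmb{x};\pmb{\xi}) \ge v(\pmb{x})$ with equality at $\pmb{\xi}^*$, and then notes that any strict improvement $J^k(\pmb{x};\pmb{\xi}')<J^k(\pmb{x};\pmb{\xi}^*)$ forces $J^j(\pmb{x};\pmb{\xi}')>J^j(\pmb{x};\pmb{\xi}^*)$ for some $j$, which is just the contrapositive of your contradiction. Your emphasis on the necessity of $L_i>0$ is well placed and matches the paper's hypothesis.
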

\begin{proof} To see this, 
 take the payoff function $J^i$  in (\ref{N-game}),  ${v}(\pmb{x})$ the value function  in (\ref{centrol_controller}), and  the optimal control $\pmb{\xi}^*:=(\pmb{\xi}^{1*},\ldots,\pmb{\xi}^{N*})$, if exists, to problem (\ref{centrol_controller}), then for any $\pmb{\xi}:= (\pmb{\xi}^1,\ldots, \pmb{\xi}^N) \in \mathcal{U}_N$ and $L_i$, with $L_i>0, \sum_{i=1}^N L_i=1$,
\begin{eqnarray}
\sum_{i=1}^N L_i {J}^i (\pmb{x};\pmb{\xi})   \geq {v}(\pmb{x}),
\end{eqnarray}
where value $ {v}(\pmb{x})$ is reached when player $i$ takes the control $\pmb{\xi}_t^{i*}$ ($ i=1,2,\ldots,N$).

If there is another $\pmb{\xi}{'}:=(\pmb{\xi}^{1'},\ldots,\pmb{\xi}^{N'})\in \mathcal{U}_N$ and $k  \in \{1,\ldots,N\}$ such that
$$ {J}^k(\pmb{x};\pmb{\xi}^{1'},\ldots, \pmb{\xi}^{N'}) < {J}^k(\pmb{x};\pmb{\xi}^{1*},\ldots, \pmb{\xi}^{N*}),$$
then given $L_i>0$ for all $i$, there must exists $j \in \{1,\ldots,N\}$ such that
$$ {J}^j(\pmb{x};\pmb{\xi}^{1'},\ldots, \pmb{\xi}^{N'}) > {J}^j(\pmb{x};\pmb{\xi}^{1*},\ldots, \pmb{\xi}^{N*}). $$
Hence the control $\pmb{\xi}^*$ is a Pareto-optimal policy by definition. 
\end{proof}

\noindent Combining Theorems  \ref{thm:skorokhod}, \ref{epsilon_out} and \ref{connection_1} yields    the following  result which summarizes the structure of the set of Pareto optima:
\begin{theorem}[Pareto-optimal policies]\label{PO_solution}
 Under Assumptions \textbf{A1}-\textbf{A5}, 
for  any set of weights $\pmb{L}=(L_1, \cdots, L_N)$ with $L_i>0 $  and $\sum_{i=1}^NL_i =1$, the unique solution $\pmb{\xi_L}\in {\cal U}_N$ 
to the regulator's problem \eqref{centrol_controller} yields a
 Pareto-optimal policy for the game (\textbf{N-player}). 
\end{theorem}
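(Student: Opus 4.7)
The plan is to assemble this theorem directly from the three preceding results rather than proving anything substantively new. For any fixed weight vector $\pmb{L}$ with $L_i>0$ and $\sum_i L_i = 1$, the aggregated running cost $H = \sum_i L_i h^i$ inherits Assumptions \textbf{A1}--\textbf{A3} from the individual $h^i$, so the regulator's problem \eqref{centrol_controller} is well-posed and all the regularity and uniqueness results of Sections \ref{sec:central_math} and \ref{subsec:optimal_policy} apply with constants depending on $\pmb{L}$.

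The first step is existence and uniqueness of $\pmb{\xi}_{\pmb{L}}\in\mathcal{U}_N$ minimizing $J_L(\pmb{x};\cdot)$. For $\pmb{x}\in\overline{\mathcal{C}}_N$, Theorem \ref{thm:skorokhod} delivers this: the optimal control is the unique solution to the Skorokhod problem with data $(\pmb{x}+\pmb{\mu}t+\pmb{\sigma}\pmb{B}_t,\mathcal{C}_N,\pmb{r},\pmb{x})$. For $\pmb{x}\notin\overline{\mathcal{C}}_N$, invoke Theorem \ref{epsilon_out}, which (under \textbf{A5}) prescribes an instantaneous jump to $\pi(\pmb{x})\in\partial\mathcal{C}_N$ at time $0$, followed by the Skorokhod reflection inside $\overline{\mathcal{C}}_N$ described above. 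The two pieces concatenate into a single admissible $\pmb{\xi}_{\pmb{L}}\in\mathcal{U}_N$. Uniqueness follows from the strict convexity of $J_L$ in $\pmb{\xi}$ (coming from $\partial^2_{\pmb{z}}H\geq c>0$ in \textbf{A3}) via the contradiction argument already given at the end of the proof of Theorem \ref{thm:skorokhod}: averaging two distinct optimal controls would strictly decrease $J_L$, a contradiction.

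Given existence and uniqueness of the regulator's optimizer, the second step is to conclude Pareto-optimality by quoting Theorem \ref{connection_1}: if some alternative $\pmb{\xi}'\in\mathcal{U}_N$ weakly dominated $\pmb{\xi}_{\pmb{L}}$ in every coordinate $J^i$ and strictly improved at least one, then taking the $L_i$-weighted sum (all weights strictly positive) would yield $J_L(\pmb{x};\pmb{\xi}')<J_L(\pmb{x};\pmb{\xi}_{\pmb{L}})$, contradicting the optimality of $\pmb{\xi}_{\pmb{L}}$ for the regulator. Hence $\pmb{\xi}_{\pmb{L}}$ is Pareto-optimal.

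I do not expect a real obstacle here: the theorem is essentially a packaging statement. The only mildly delicate point is to verify that the vector-valued reflection process produced by the Skorokhod problem genuinely decomposes into per-player admissible pairs $(\xi^{i,+}_{\pmb{L}},\xi^{i,-}_{\pmb{L}})\in\mathcal{U}_N^i$. This is immediate from the structure of the reflection cone \eqref{cone_original}: the vector fields $\gamma_j$ defined in \eqref{eq:gamma_direction} are axis-aligned ($\pm\pmb{e}_i$), so the increment $\pmb{N}_s\,d\eta_s$ decomposes coordinate-wise into a nonnegative push in direction $+\pmb{e}_i$ (contributing to $d\xi^{i,+}$) and a nonnegative push in direction $-\pmb{e}_i$ (contributing to $d\xi^{i,-}$), yielding cadlag non-decreasing components. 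The integrability requirements in the definition of $\mathcal{U}_N^i$ follow from the bound $\mathbb{E}_{\pmb{x}}\int_0^\infty e^{-\rho t}\,d\eta^*_t<\infty$ established in \eqref{bounded_control} of Proposition \ref{prop:property_optimal_policy}. With these observations, the theorem follows by citation.
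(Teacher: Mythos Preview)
Your proposal is correct and follows essentially the same approach as the paper: the theorem is stated there as a direct corollary obtained by ``combining Theorems \ref{thm:skorokhod}, \ref{epsilon_out} and \ref{connection_1},'' which is precisely what you do. One small inaccuracy: Assumptions \textbf{A1}--\textbf{A3} are imposed directly on $H$ in the paper, not derived from hypotheses on the individual $h^i$, so your opening remark about $H$ ``inheriting'' them is unnecessary; otherwise your added verification that the Skorokhod reflection decomposes into admissible per-player controls is a helpful elaboration the paper leaves implicit.
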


The analytical structure of  the continuation region \eqref{set_PO_redefine} and the Pareto-optimal policy suggest the following description: $\pmb{X}_t$ evolves according to the uncontrolled diffusion process inside the interior of ${\cal C}_N$ and when it hits boundary at a point belonging to $\partial G_i$ or $\partial G_{i+N}$, then bank $i$ will adjust its rate to push it back instantaneously inside ${\cal C}_N$.  In particular the optimal policies lead to continuous controls $\xi^i$. 

\subsection{Pareto-optimal policies for interbank lending}\label{sec.LIBOR}

Let us now translate these results in the setting of the interbank lending model described in Section \ref{sec:LIBOR}.

Theorem \ref{PO_solution} implies that Pareto optima for the interbank lending market may be  described in terms of the policy of a regulator facing the  optimization problem \eqref{centrol_controller} with an aggregate  payoff function \eqref{H} representing a weighted average of payoffs of individual banks. 

Under a Pareto-optimal policy, the interbank rates may be described as a `regulated diffusion' in a bounded region ${\cal C}_N$  defined by \eqref{set_PO_redefine}.
The boundedness of ${\cal C}_N$ implies that   the payoff structure \eqref{eq.payoff} leads to  {\it endogenous} bounds on the interbank rates: the regulator only intervenes when the rates reach these bounds, represented by the boundary of the continuation region ${\cal C}_N$.

The Pareto-optimal policy leads $\pmb{X}_t$ to remain confined in the bounded region ${\cal C}_N$, which implies in particular that the spread ${X}^i$ remains bounded.
In the context of the LIBOR mechanism, this can be seen as the impact of `trimmed' averaging, which is the origin of the terms $K_i^+,K_i^-$, as explained in Section \ref{sec:LIBOR}: as banks internalize the risk of being `outliers' in the benchmark fixing, they confine their rates to a bounded region.

The process $\pmb{X}_t$ diffuses in the interior of ${\cal C}_N$, following the random shocks banks are subjected to, and is pushed into the interior when it reaches the boundary. More precisely,  the boundary   $\partial{\cal C}_N$  is composed of  $2N$ `faces'  corresponding to the saturation of the constraints in
\eqref{G}. Edges correspond to intersections of two or more faces. When $\pmb{X}_t$ reaches a point $\pmb{x}\in \partial{\cal C}_N$, action is taken by all banks $i$ such that $\pmb{x}\notin G_i\cup G_{i+N}$: 
if $\pmb{x}\notin G_i $ then $X^i$ is reduced i.e. $d\xi^{i,-}>0$ and if  $\pmb{x}\notin  G_{i+N}$ then  $X^i$ is increased i.e. $d\xi^{i,+}>0$.
 When $\pmb{X}_t$ reaches the interior of such a face, only bank $i$ adjusts its rate in order to 
  push back $\pmb{X}_t$  to the interior. Similarly, if $\pmb{X}_t$ reaches an edge, two or more banks need to simultaneously adjust their rates.
 The rate at which such simultaneous adjustments occur is given by the {\it intersection local time} \citep{rosen1987} of $(X^1,...,X^N)$ on the boundary.
 Therefore  Pareto-optimal policy  rarely leads to more than one bank's rate to be adjusted; a simultaneous rate adjustment by several banks is most likely {\it not} associated   with a Pareto-optimal policy and is thus a signature of a non-optimal behavior by banks.

We also note that, our admissible controls allow for  discontinuous adjustments of rates, and Pareto-optimal policies correspond to {\it instantaneously}  pushing the process to the interior. As discussed in Theorem \ref{epsilon_out},  Pareto-optimal policies may involve  an initial push  at $t=0$ to bring the initial condition into ${\cal C}_N$, which we may interpret as the entry of a new bank into the interbank market.

The set of all such Pareto optima is parameterized by the set of allocations $L=(L_1,...,L_N)$ with $L_i>0$ and $ \sum_{i=1}^N L_i=1$. 
These allocations lead to different outcomes across banks.
A natural choice is to take $L_i$  proportional to the  loan volume of bank $i$;   \eqref{H} then represents an aggregate wealth maximization problem and this policy leads to the same pro-rata cost across banks.
As is clear from \eqref{gamma_function}, choosing a higher weight $L_i$ leads to a tighter control on the rates of bank $i$.

\section{Explicit solution for two players}\label{sec:bank}
{We now study in more detail the structure of the optimal strategies for the case of $N=2$. Our analytical results illustrate the difference between  Nash equilibria and  Pareto optima    and demonstrate the impact of  regulatory intervention in this game.}

\subsection {Pareto-optimum for $N=2$}\label{sec:bank_explicit}
For the special case of  $N=2$, we can derive explicitly its Pareto-optimal solution. 
For  ease of exposition, we shall assume the following conditions in the case of $N=2$.
\begin{itemize}
    \item[{\bf B1.}]$a_1=a_2$ {and $L_1=L_2$}. 
    In other words, the regulator allocates equal weights to the banks.
    \item[{\bf B2.}]$h^1(x^1,x^2)=h^2(x^1,x^2)=h(x^1-x^2)$, $h \in \mathcal{C}^3(\mathbb{R})$ is symmetric, and there exist $0<c<C$ such that $c<h^{\prime \prime} <C$, and $h^{\prime \prime}$ is non-decreasing and bounded away from $0$.
    \item[{\bf B3.}] $\mu^1=\mu^2=0$, $K_1^+=K_1^-=:K_1>0$ and $K_2^+=K_2^-=:K_2>0$.  
\end{itemize}

Note that Assumption {\bf B2} is more general than  Assumptions {\bf A1-A3}. As a result, we will see in Proposition \ref{prop:PO} that the non-action region may not  necessarily be bounded and the Pareto-optimal policy for the game may not be unique with fixed weights $L_1=L_2$.

Under Assumption {\bf B3}, the  rates  $X_t^1$ and $X_t^2$ are assumed to be 
\begin{eqnarray}\label{both-dynamic}
 X_t^i = \pmb{\sigma}^i\cdot d\pmb{B}_t   +d\xi_t^{i,+} -  d\xi_t^{i,-}, \text{ with } x_{0-}^i=x^i,\,\,i=1,2.
\end{eqnarray}
The  value function  $v(x^1,x^2)$  of \eqref{centrol_controller} becomes
\begin{eqnarray}\label{single-game}
{v}(x^1,x^2) &=& \inf_{(\pmb{\xi}^1,\pmb{\xi}^2) \in \mathcal{U}_2} {J}(x^1,x^2,\pmb{\xi}^1,\pmb{\xi}^2) 
=\inf_{(\pmb{\xi}^1,\pmb{\xi}^2) \in \mathcal{U}_2} \frac{1}{2}\left[ J^1(x^1,x^2,\pmb{\xi}^1,\pmb{\xi}^2) +  J^2(x^1,x^2,\pmb{\xi}^1,\pmb{\xi}^2) \right] \\
&=& \inf_{(\pmb{\xi}^1,\pmb{\xi}^2) \in \mathcal{U}_2}  \mathbb{E}_{(x^1,x^2)} \left [\int_0^{\infty} e^{-\rho t} \left( h\left({X_t^1-X_t^2}\right)dt + \frac{K_1}{2}d \xi_t^{1,+}+\frac{K_1}{2}d\xi_t^{1,-} + \frac{K_2}{2} d \xi_t^{2,+} +\frac{K_2}{2} d\xi_t^{2,-} \right) \right],\nonumber
\end{eqnarray}
subject to (\ref{both-dynamic}).

\begin{lemma}
\label{lemma:zero_player_1} Assume $K_2<K_1$ and {\bf B1-B3}. Then for any $(\pmb{\xi}^{1*},\pmb{\xi}^{2*})\in \arg \inf_{(\pmb{\xi}^{1},\pmb{\xi}^{2})\in \mathcal{U}_2} J(x^1,x^2,\pmb{\xi}^1,\pmb{\xi}^2)$, 
\[
({\xi}_t^{1,+*},{\xi}_t^{1,-*}) = (0,0)
\text{  for any } t \ge 0 \,\,\text{ a.s.}.\]
\end{lemma}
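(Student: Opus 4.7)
The plan is to exploit the fact that under Assumption \textbf{B2} the running cost depends only on the difference $X^1_t - X^2_t$, so player 1's action can be mimicked by player 2 with an opposite sign at strictly smaller cost whenever $K_2 < K_1$. I would argue by contradiction: assume $(\pmb{\xi}^{1*},\pmb{\xi}^{2*})$ is optimal but $\pmb{\xi}^{1*}$ is not identically zero, and construct a strictly better admissible control.

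More concretely, given any optimal $(\pmb{\xi}^{1*},\pmb{\xi}^{2*})$, I would define
$$\tilde{\xi}^{1,+}_t=\tilde{\xi}^{1,-}_t\equiv 0,\qquad \tilde{\xi}^{2,+}_t=\xi^{2,+*}_t+\xi^{1,-*}_t,\qquad \tilde{\xi}^{2,-}_t=\xi^{2,-*}_t+\xi^{1,+*}_t.$$
The first step is a quick check that $(\tilde{\pmb{\xi}}^1,\tilde{\pmb{\xi}}^2)\in\mathcal{U}_2$: each component is a sum of non-decreasing c\`adl\`ag processes starting at zero, and the integrability conditions in \eqref{A_N} are preserved by summation. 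The second step is to verify that under this alternative policy the process driving the running cost is unchanged; a direct computation from \eqref{both-dynamic} gives $\tilde{X}^1_t-\tilde{X}^2_t=X^{1*}_t-X^{2*}_t$ for all $t\ge 0$, because the net contribution of player 1's original control to the difference has been reassigned to player 2 with the opposite sign. Hence $h(\tilde{X}^1_t-\tilde{X}^2_t)=h(X^{1*}_t-X^{2*}_t)$ path by path.

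The third step is to compare the cost functionals. Using $K_1^\pm=K_1$, $K_2^\pm=K_2$ from \textbf{B3} and $L_1=L_2=\tfrac12$ from \textbf{B1},
$$J(x^1,x^2;\pmb{\xi}^{1*},\pmb{\xi}^{2*})-J(x^1,x^2;\tilde{\pmb{\xi}}^{1},\tilde{\pmb{\xi}}^{2})=\frac{K_1-K_2}{2}\,\mathbb{E}_{(x^1,x^2)}\!\int_0^\infty e^{-\rho t}\bigl(d\xi^{1,+*}_t+d\xi^{1,-*}_t\bigr),$$
since the running-cost and player 2 control-cost terms cancel in view of $\tilde{\xi}^{2,+}_t+\tilde{\xi}^{2,-}_t=\xi^{2,+*}_t+\xi^{2,-*}_t+\xi^{1,+*}_t+\xi^{1,-*}_t$. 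Because $K_1>K_2$, this difference is non-negative, and it is strictly positive unless $\mathbb{E}_{(x^1,x^2)}\!\int_0^\infty e^{-\rho t}(d\xi^{1,+*}_t+d\xi^{1,-*}_t)=0$, which by the non-decreasing c\`adl\`ag property and $\xi^{1,\pm*}_{0-}=0$ forces $\xi^{1,+*}_t=\xi^{1,-*}_t=0$ for all $t\ge 0$ almost surely. Optimality of $(\pmb{\xi}^{1*},\pmb{\xi}^{2*})$ thus forces the latter, yielding the claim.

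There is no real obstacle here; the argument is essentially a change-of-variables at the level of admissible controls. The only points requiring mild care are (i) checking that the reassignment produces a genuinely admissible pair (this is where $K_i^+=K_i^-$ under \textbf{B3} is convenient, so that the positive/negative parts can be freely swapped without changing the control cost coefficient) and (ii) using the symmetry $h(x^1-x^2)=h^1=h^2$ from \textbf{B2} so that the aggregate running cost is genuinely a function of the difference only.
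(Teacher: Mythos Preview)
Your argument is correct and follows essentially the same route as the paper: transfer player~1's control to player~2 so that $X^1-X^2$ is unchanged, then exploit $K_1>K_2$ to obtain a strictly cheaper admissible policy. If anything your version is slightly cleaner, since you reassign both $\xi^{1,+*}$ and $\xi^{1,-*}$ simultaneously, whereas the paper only treats $\xi^{1,+*}$ explicitly and leaves the symmetric case implicit.
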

\begin{proof}
The statement is proved by contradiction. Assume there exists an optimal policy $(\pmb{\xi}^{1*},\pmb{\xi}^{2*})\in\arg\inf_{(\pmb{\xi}^{1},\,\pmb{\xi}^{2})\in \mathcal{U}_2}{J}(x^1,x^2,(\pmb{\xi}^{1},\pmb{\xi}^{2}))$  and $t_0 \ge 0$ such that
\[
{\xi}_{t_0}^{1*,+}>0.
\]
Since ${\xi}^{1,+*}$ is a non-decreasing process, we have ${\xi}_{t}^{1,+*}>0$ for all $t \ge t_0.$ Now construct the following admissible policy $(\overline{\pmb{\xi}}^1,\overline{\pmb{\xi}}^2)$ such that, $\forall t \ge0$,
\begin{eqnarray}
 \begin{cases}
 \overline{\xi}_{t}^{2,-} =  {\xi}_{t}^{1*,+}+{\xi}_{t}^{2*,-},\\
  \overline{\xi}_{t}^{1,+} = 0,\\
   \overline{\xi}_{t}^{1,-} = {\xi}_{t}^{1*,-},\,\, \overline{\xi}_{t}^{2,+} = {\xi}_{t}^{2*,+}.\\
 \end{cases}
\end{eqnarray}
 Then
\[
{J}({x}^1,x^2,{\pmb{\xi}}^{1*},{\pmb{\xi}^{2*}}) - {J}({x}^1,x^2,{\overline{\pmb{\xi}}}^{1},\overline{{\pmb{\xi}}}^{2}) = \mathbb{E}_{(x^1,x^2)}\left[\int_0^{\infty}e^{-\rho t}\frac{K_1-K_2}{2} d \xi_t^{1*,+} \right]>0,
\]
which contradicts  the optimality of the control process $({\pmb{\xi}}^{1*},{\pmb{\xi}^{2*}})$.
\end{proof}

We now show that solving the control problem \eqref{both-dynamic}-\eqref{single-game} is equivalent to the following control problem  \eqref{single-game2}-\eqref{both-dynamic-reduce} when $K_1>K_2$,

\begin{eqnarray}\label{single-game2}
{u}(y) &=& \inf_{\pmb{\eta} \in \mathcal{U}_1} \widehat{J}(y,\pmb{\eta})  = \inf_{\pmb{\eta}\in \mathcal{U}_1}\mathbb{E}_y \left [\int_0^{\infty} e^{-\rho t} \left( h\left({Y_t}\right)dt +\frac{K_2}{2} d\eta_t^{+}  +\frac{K_2}{2} d\eta_t^{-} \right) \right],\\
{\rm where}\qquad\label{both-dynamic-reduce}
    dY_t  &=& ( \pmb{\sigma}^1 - \pmb{\sigma}^2)\cdot d\pmb{B}_t  - d\eta_t^{+} +d \eta_t^{-}, \textit{ with }\,\, Y_{0-} = y.
\end{eqnarray}

\begin{lemma}[Equivalence]\label{lemma:equivalence}
Assume {\bf B1-B3}  and $K_1>K_2$, then
\begin{itemize}
    \item[(i)] $v(x^1,x^2)=u(x^1-x^2)$;
    \item[(ii)] If $(\pmb{\xi}^{1*},\pmb{\xi}^{2*})\in \arg\inf_{(\pmb{\xi}^{1},\pmb{\xi}^{2})\in \mathcal{U}_2} J(x^1,x^2,(\pmb{\xi}^{1},\pmb{\xi}^{2}))$,  then $({\xi}_t^{1*,+},{\xi}_t^{1*,-}) = (0,0)$ $\forall t$ a.s., and\\ $ \pmb{\xi}^{2*}\in \arg\inf_{\pmb{\eta}\in \mathcal{U}_1} \widehat{J}(x^1-x^2,\pmb{\eta})$;
    \item[(iii)] If  $\pmb{\eta}^*\in \arg\inf_{\pmb{\eta}\in \mathcal{U}_1} \widehat{J}(x^1-x^2,\pmb{\eta})$, then $((\pmb{0},\pmb{0}),\pmb{\eta})\in \arg\inf_{(\pmb{\xi}^{1},\pmb{\xi}^{2})\in \mathcal{U}_2} J(x^1,x^2,(\pmb{\xi}^{1},\pmb{\xi}^{2}))$.
\end{itemize}
\end{lemma}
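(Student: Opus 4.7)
The plan is to reduce the two-player regulator problem to the one-player problem by showing that under $K_1 > K_2$, player $1$'s controls are never beneficial: player $2$ can always mimic any adjustment of the spread $Y_t = X_t^1 - X_t^2$ at strictly lower cost. The core ingredient is the following change of variables. Given any admissible pair $(\pmb{\xi}^1,\pmb{\xi}^2)\in\mathcal{U}_2$, define
\[
\eta^+ := \xi^{1,-} + \xi^{2,+}, \qquad \eta^- := \xi^{1,+} + \xi^{2,-}.
\]
Both are non-decreasing c\`adl\`ag and their discounted expectations are finite, so $\pmb{\eta}\in\mathcal{U}_1$. A direct computation verifies that $Y_t$ solves the reduced dynamics \eqref{both-dynamic-reduce} driven by the $1$-dimensional noise $(\pmb{\sigma}^1-\pmb{\sigma}^2)\cdot\pmb{B}_t$, and
\[
J(x^1,x^2;\pmb{\xi}^1,\pmb{\xi}^2) - \widehat{J}(x^1-x^2;\pmb{\eta}) \;=\; \frac{K_1-K_2}{2}\,\mathbb{E}_{(x^1,x^2)}\!\int_0^{\infty} e^{-\rho t}\bigl(d\xi_t^{1,+}+d\xi_t^{1,-}\bigr) \;\geq\; 0,
\]
with equality precisely when $\pmb{\xi}^1\equiv \pmb{0}$ a.s. Conversely, given any $\pmb{\eta}\in\mathcal{U}_1$, taking $\pmb{\xi}^1=\pmb{0}$ and $(\xi^{2,+},\xi^{2,-})=(\eta^+,\eta^-)$ gives equality.

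Part (i) follows by taking infima in both directions: the forward map gives $v(x^1,x^2)\geq u(x^1-x^2)$, while the reverse substitution gives $v(x^1,x^2)\leq u(x^1-x^2)$. For part (ii), I would invoke Lemma \ref{lemma:zero_player_1} to obtain $\xi^{1,+*}=0$ a.s., then re-run the same contradiction argument with roles of $+$ and $-$ swapped, constructing $\overline{\xi}^{2,+} = \xi^{1,-*} + \xi^{2,+*}$ to transfer $\xi^{1,-*}$-mass onto player $2$, to conclude $\xi^{1,-*}=0$ a.s. as well. Then the identity $\widehat{J}(x^1-x^2;\pmb{\xi}^{2*}) = J(x^1,x^2;\pmb{0},\pmb{\xi}^{2*}) = v(x^1,x^2) = u(x^1-x^2)$ forces $\pmb{\xi}^{2*}\in\arg\inf_{\pmb{\eta}\in\mathcal{U}_1}\widehat{J}(x^1-x^2;\pmb{\eta})$. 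Part (iii) is the immediate reverse: taking $\pmb{\xi}^1=\pmb{0}$ and $\pmb{\xi}^2=\pmb{\eta}^*$ yields $J = \widehat{J} = u = v$, so this candidate attains the infimum of $J$.

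The argument is essentially a bookkeeping reduction exploiting $K_1>K_2$ and presents no genuine analytical obstacle; the only point requiring any care is the symmetric analogue of Lemma \ref{lemma:zero_player_1} for $\xi^{1,-}$, where one must check that the swapped-sign perturbation $\overline{\pmb{\xi}}$ stays in $\mathcal{U}_2$ (non-negativity, non-decrease, and the integrability bounds \eqref{A_N} all transfer because $\xi^{1,-*}$ and $\xi^{2,+*}$ already satisfy them). Notably the proof does \emph{not} require existence of an optimizer for (i); existence is assumed in (ii) as a hypothesis, and (iii) produces one.
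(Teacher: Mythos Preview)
Your proposal is correct and follows essentially the same idea as the paper: transfer player~1's controls to player~2 at strictly lower cost via the substitution $\eta^+=\xi^{1,-}+\xi^{2,+}$, $\eta^-=\xi^{1,+}+\xi^{2,-}$. The paper's proof invokes Lemma~\ref{lemma:zero_player_1} to first pin down $\pmb{\xi}^{1*}=\pmb{0}$ for any optimizer, then restricts to the class $\{\pmb{\xi}^1=\pmb{0}\}$ and identifies the reduced problem with \eqref{single-game2}--\eqref{both-dynamic-reduce}. Your route is a slight refinement: you establish the pointwise inequality $J(x^1,x^2;\pmb{\xi}^1,\pmb{\xi}^2)\ge \widehat{J}(x^1-x^2;\pmb{\eta})$ for \emph{every} admissible pair, with equality iff $\pmb{\xi}^1=\pmb{0}$, and then take infima. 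This has the minor advantage that part~(i) goes through without assuming the infimum in $J$ is attained, whereas the paper's argument, as written, tacitly uses existence (it restricts attention to the class containing the optimizer). For part~(ii) you could in fact bypass Lemma~\ref{lemma:zero_player_1} entirely: your chain $v=J(\pmb{\xi}^{1*},\pmb{\xi}^{2*})\ge \widehat{J}(\pmb{\eta}^*)\ge u=v$ forces all inequalities to be equalities, giving both $\pmb{\xi}^{1*}=\pmb{0}$ and optimality of $\pmb{\eta}^*=\pmb{\xi}^{2*}$ in one stroke. But invoking the lemma, as you do, is also fine.
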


\begin{proof}
By Lemma \ref{lemma:zero_player_1}, $({\xi}_t^{1*,+},{\xi}_t^{1*,-}) = (0,0)
\text{  for any } t \ge 0 \,\,\text{ a.s.}.$ Therefore, we can consider  a smaller class of admissible control set where $(\xi_t^{1,+},\xi_t^{1,-})=(0,0)$ $\forall t \ge 0$ and $\pmb{\xi}^2 \in \mathcal{U}_1$. Note that with $(\xi_t^{1,+},\xi_t^{1,-})=(0,0)$, we have
\begin{eqnarray}\label{inter:object}
 X_t^1-  X_t^2  = ( \pmb{\sigma}^1-\pmb{\sigma}^2)\cdot \pmb{B}_t   - \xi_t^{2,+} + \xi_t^{2,-}+(x^1-x^2),
\end{eqnarray}

and
\begin{eqnarray}\label{inter:dynamics}
J(x^1,x^2,\pmb{\xi}^1,\pmb{\xi}^2) = \mathbb{E}_{(x^1,x^2)} \left [\int_0^{\infty} e^{-\rho t} \left( h\left({X_t^1-X_t^2}\right)dt + \frac{K_2}{2} d \xi^{2,+} +\frac{K_2}{2} d\xi^{2,-} \right) \right].
\end{eqnarray}
Clearly problem \eqref{inter:object}-\eqref{inter:dynamics} is equivalent to the one-dimensional control problem \eqref{single-game2}-\eqref{both-dynamic-reduce} with $y=x^1-x^2$. Hence the claim.
\end{proof}

\begin{proposition}[Pareto-optimal solution when $N=2$] \label{prop:PO}
Assume   \textbf{B1}-\textbf{B3}.
 \begin{itemize}
     \item[{\rm(i)}] If $K_1 = K_2=K$, then
     the following control yields one Pareto-optimal policy to game \eqref{both-dynamic}-\eqref{single-game}:
\begin{eqnarray}\label{distribution}
\begin{aligned}
\pmb{\xi}_t^{1*} = (\xi_t^{1*,+},\xi_t^{1*,-}) &=\left(0,\max \left\{0,\max_{0 \leq u \leq t} \left \{(x^1-x^2) + (\pmb{\sigma}^1- \pmb{\sigma}^2)\cdot \pmb{B}_u+ \xi_u^{2*,-} - {c}_1 \right\} \right\}\right),\\
\pmb{\xi}_t^{2*} = (\xi_t^{2*,+},\xi_t^{2*,-}) &= \left(0, \max \left\{0,\max_{0 \leq u \leq t}  \left \{-(x^1-x^2) + (\pmb{\sigma}^2- \pmb{\sigma}^1)\cdot \pmb{B}_u+\xi_u^{1*,-}  - {c}_1 \right \}\right \}\right),
\end{aligned}
\end{eqnarray}
where $c_1$ is the unique solution to
\begin{eqnarray}\label{c1}
\frac{\widetilde{\sigma}}{\sqrt{2 \rho}} \tanh \left(\frac{\sqrt{2 \rho}}{\widetilde{\sigma}} x\right) = \frac{p_1^{\prime}(x)-\frac{K}{2}}{p_1^{\prime \prime}(x)},
\end{eqnarray}
 and
 \begin{eqnarray}\label{p1}
p_1(x) = \mathbb{E} \left[\int_0^{\infty} e^{-\rho t} h \left({x}+\widetilde{\sigma}B_t \right)dt \right],
\end{eqnarray}
with $\widetilde{\sigma} = \sqrt{\sum_{i=1}^2\sum_{j=1}^D\sigma_{ij}^2}$.
The associated Pareto-optimal value is
\begin{eqnarray}\label{single-solution}
    v(x^1,x^2)=\left\{
                \begin{array}{ll}
                  -\frac{\widetilde{\sigma}^2p_1^{\prime \prime}(c_1) \cosh \left(\left(x^1-x^2\right)\frac{\sqrt{2 \rho}}{\widetilde{\sigma}}\right)}{2 \rho \cosh \left(c_1 \frac{\sqrt{2 \rho}}{\widetilde{\sigma}} \right)} + p_1(x^1-x^2), \qquad \ \  0 \leq & x^1-x^2 \leq c_1,\\
                  v(x^2+c_1,x^2)+\frac{K}{2}(x^1-x^2-c_1), \qquad &x^1-x^2 \geq c_1,\\
                 v(-x^1,-x^2), \qquad & x^1-x^2 <0.
                \end{array}
              \right.
  \end{eqnarray}

\item[{\rm(ii)}] If $K_1 > K_2$
 then the following control yields a Pareto-optimal policy  to game \eqref{both-dynamic}-\eqref{single-game},
\begin{eqnarray}
\pmb{\xi}_t^{1*} &=& (0,0),\,\, \text{ and }\,\, \pmb{\xi}_t^{2*} = (\xi_t^{2*,+},\xi_t^{2*,-}) \text{ with } \label{distribution1}\\
\xi_t^{2*,-} &=& \max \left\{0,\max_{0 \leq u \leq t}  \left \{-(x^1-x^2) +(\pmb{\sigma}^2- \pmb{\sigma}^1)\cdot \pmb{B}_u  +\xi_u^{2*,+}  - \widetilde{c}_1 \right \}\right \},\label{distribution2}\\
\xi_t^{2*,+} &=& \max \left\{0,\max_{0 \leq u \leq t} \left \{(x^1-x^2) + (\pmb{\sigma}^1-  \pmb{\sigma}^2)\cdot \pmb{B}_u +\xi_u^{2*,-}- \widetilde{c}_1 \right\} \right\},\label{distribution3}
\end{eqnarray}
where $\widetilde{c}_1 $ is the unique solution to
\begin{eqnarray}\label{c1_tilde}
\frac{\widetilde{\sigma}}{\sqrt{2 \rho}} \tanh \left(\frac{\sqrt{2 \rho}}{\widetilde{\sigma}} x\right) = \frac{p_1^{\prime}(x)-\frac{K_2}{2}}{p_1^{\prime \prime}(x)},
\end{eqnarray}
 and the associated Pareto-optimal value is
\begin{eqnarray}\label{single-solution_tilde}
    v(x^1,x^2)=\left\{
                \begin{array}{ll}
                  -\frac{\widetilde{\sigma}^2p_1^{\prime \prime}(\widetilde{c}_1) \cosh \left((x^1-x^2)\frac{\sqrt{2 \rho }}{\widetilde{\sigma}}\right)}{2 \rho  \cosh \left(\widetilde{c}_1 \frac{\sqrt{2 \rho }}{\widetilde{\sigma}} \right)} + p_1(x^1-x^2), \qquad \ \  0 \leq & x^1-x^2 \leq \widetilde{c}_1,\\
                  v(x^2+\widetilde{c}_1,x^2)+\frac{K_2}{2}((x^1-x^2)-\widetilde{c}_1), \qquad &x^1-x^2 \geq \widetilde{c}_1,\\
                 v(-x^1,-x^2), \qquad & x^1-x^2 <0.
                \end{array}
              \right.
  \end{eqnarray}
 \end{itemize}
\end{proposition}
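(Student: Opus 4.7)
The plan is to exploit the one-dimensional reduction supplied by Lemma \ref{lemma:equivalence} together with an explicit smooth-fit analysis of the resulting free-boundary ODE, and then pull back the optimum to the two-player game.

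Start with Case (ii), $K_1 > K_2$. Lemma \ref{lemma:equivalence} reduces the regulator's problem to the one-dimensional control problem \eqref{single-game2}--\eqref{both-dynamic-reduce}. By Theorems \ref{thm:viscosity} and \ref{prop:regularity_uniqueness} applied in dimension one, $u \in \mathcal{W}^{2,\infty}_{\text{loc}}(\mathbb{R})$ is the unique viscosity solution of
\begin{equation*}
\max\!\left\{\rho u - \tfrac{\widetilde{\sigma}^2}{2} u'' - h(y),\ |u'(y)| - \tfrac{K_2}{2}\right\} = 0,
\end{equation*}
and the continuation set is bounded. Because $h$ is even, $y\mapsto u(-y)$ is also a solution, so uniqueness forces $u$ to be even and the continuation set to take the form $(-\widetilde c_1, \widetilde c_1)$ for some $\widetilde c_1 > 0$. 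Inside the continuation region, $u$ solves the linear ODE $\rho u - \tfrac{\widetilde{\sigma}^2}{2} u'' = h$; the function $p_1$ defined by \eqref{p1} is a particular even solution (Feynman--Kac), so the general even solution is $u(y)=p_1(y)+A\cosh(\tfrac{\sqrt{2\rho}}{\widetilde\sigma}y)$. The smooth-fit conditions $u'(\widetilde c_1)=K_2/2$ and $u''(\widetilde c_1)=0$ then determine both $A$ and $\widetilde c_1$: eliminating $A$ produces equation \eqref{c1_tilde}, and substituting back gives $A=-\widetilde{\sigma}^2 p_1''(\widetilde c_1)/(2\rho\cosh(\widetilde c_1\sqrt{2\rho}/\widetilde\sigma))$, yielding \eqref{single-solution_tilde}. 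Existence and uniqueness of $\widetilde c_1>0$ follow by checking that the difference of the two sides in \eqref{c1_tilde} is negative at $0$ (since $p_1'(0)=0$ and $p_1''>0$ by B2) and tends to $+\infty$, and is strictly monotone on $(0,\infty)$ because $h''$ is non-decreasing. Finally the process $Y_t^*$ driven by $\pmb\eta^*$ in \eqref{distribution2}--\eqref{distribution3} is the unique solution of the one-dimensional Skorokhod problem reflecting at $\{\pm\widetilde c_1\}$, and applying It\^o's formula to $e^{-\rho t}u(Y_t^*)$ yields $\widehat J(y;\pmb\eta^*)=u(y)$, so $\pmb\eta^*$ is optimal for the one-dimensional problem. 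Lemma \ref{lemma:equivalence}(iii) then lifts the optimum back to $((\pmb 0,\pmb 0),\pmb\eta^*)$ for the two-player problem, which is Pareto-optimal by Theorem \ref{connection_1}.

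For Case (i), $K_1=K_2=K$, Lemma \ref{lemma:zero_player_1} no longer forces $\pmb\xi^{1*}=\pmb 0$, but observe that the marginal cost of decreasing $X^1-X^2$ is $K/2$ whether one pushes $X^1$ down via $d\xi^{1,-}$ or $X^2$ up via $d\xi^{2,+}$; similarly for increases. Hence, by the same reduction argument as in Lemma \ref{lemma:equivalence}, setting any admissible split of the downward singular action across the two banks (and no upward action) yields a control whose cost equals that of the one-dimensional reflected problem with parameter $K_2=K$. The explicit pair \eqref{distribution} is one such split realising reflection of $Y_t=X^1_t-X^2_t+(\pmb\sigma^1-\pmb\sigma^2)\cdot\pmb B_t-\xi^{1*,-}_t-\xi^{2*,-}_t$ in $[-c_1,c_1]$, with $c_1$ defined by \eqref{c1} and value function \eqref{single-solution} obtained by repeating the smooth-fit computation with $K_2$ replaced by $K$. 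Since any such splitting achieves the same regulator cost, the Pareto-optimal policy is non-unique in this case, which is consistent with the fact that Assumption \textbf{A4} is no longer enforced.

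The main obstacle is the verification step in Case (ii), namely showing that the candidate \eqref{single-solution_tilde} constructed locally inside $(-\widetilde c_1,\widetilde c_1)$ and extended linearly outside is in fact the value function globally. This requires checking that the $\mathcal{C}^2$ extension satisfies $|u'|\le K_2/2$ throughout $[-\widetilde c_1,\widetilde c_1]$ and that $\rho u-\tfrac{\widetilde\sigma^2}{2}u''-h\le 0$ outside $[-\widetilde c_1,\widetilde c_1]$ where $u''=0$; both reductions use convexity of $u$ and the monotonicity of $h''$ from \textbf{B2}, together with the smooth-fit identity at $\widetilde c_1$. A secondary technical point is justifying that $p_1$ in \eqref{p1} is well-defined, $\mathcal{C}^3$, even, and strictly convex with $p_1''$ bounded away from zero, all of which follow from differentiating under the expectation using the bounds in \textbf{B2} and the Gaussian tails of $B_t$.
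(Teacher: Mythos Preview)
Your proposal is correct and follows essentially the same route as the paper: reduce to the one-dimensional problem via Lemma~\ref{lemma:equivalence}, solve the resulting HJB by smooth fit, and then pull back. The paper is terser, citing the classical one-dimensional singular control literature (Bene\v{s}--Shepp--Witsenhausen 1980, Karatzas 1982) for the explicit $\mathcal{C}^2$ solution \eqref{hjb-solution-central} rather than deriving the constants $A$ and $\widetilde c_1$ from the two smooth-fit equations $u'(\widetilde c_1)=K_2/2$, $u''(\widetilde c_1)=0$ as you do; it also handles Case~(i) by the same cost-symmetry observation you make. Your extra detail on existence/uniqueness of $\widetilde c_1$ and on the verification inequalities outside $[-\widetilde c_1,\widetilde c_1]$ is a useful complement to the paper's citation-based argument. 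One small slip: your displayed formula for $Y_t$ in Case~(i) has a sign error (with $\xi^{1,+}=\xi^{2,+}=0$ one gets $X^1_t-X^2_t=(x^1-x^2)+(\pmb\sigma^1-\pmb\sigma^2)\cdot\pmb B_t-\xi^{1,-}_t+\xi^{2,-}_t$, not $-\xi^{2,-}_t$), but this does not affect the argument.
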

\begin{remark}

Note that  under {\bf B1-B3}, the Pareto-optimal policy is no longer unique with fixed $L_1=L_2=\frac{1}{2}$. 
For instance, when $K_1=K_2=K$, the following control yields another Pareto-optimal policy with the same   value function defined in \eqref{single-solution}:
 \begin{eqnarray}\label{distribution_not_unique}
\begin{aligned}
\pmb{\xi}_t^{1*} &= (\xi_t^{1*,+},\xi_t^{1*,-}) = \left(0, 0\right) \,\, \text{ and }\,\,\pmb{\xi}_t^{2*} = (\xi_t^{2*,+},\xi_t^{2*,-}), \text{ with }\\
\xi_t^{2*,-}  & = \max \left\{0,\max_{0 \leq u \leq t}  \left \{-(x^1-x^2) + (\pmb{\sigma}^2- \pmb{\sigma}^1)\cdot \pmb{B}_u +\xi_u^{2*,+}   - {c}_1 \right \}\right \},\\
\xi_t^{2*,+} &=\max \left\{0,\max_{0 \leq u \leq t} \left \{(x^1-x^2) + (\pmb{\sigma}^1- \pmb{\sigma}^2)\cdot \pmb{B}_u +\xi_u^{2*,-}  - {c}_1 \right\} \right\}.
\end{aligned}
\end{eqnarray}
\end{remark} 
\begin{remark}{\em
Under the Pareto-optimal policy,  the controlled dynamics  $X_t^
{1*}$ and $X_t^{2*}$ are such that $\mathbb{P}(\|X_t^{1*}-X_t^{2*}\| \leq c_1, \forall t \ge 0)=1$. This suggests that there should be a mechanism, such as `trimming', to maintain the dispersion of rates within a certain range. In addition,
this solution form indicates that  it is socially optimal for  the more efficient bank (i.e., the one with the lower cost of adjustment) to take the lead in lending rate adjustment. The other banks then become  `free riders'.  }
\end{remark}
\begin{proof}
First let us prove the case when $K_1>K_2$. 
By Lemma \ref{lemma:equivalence}, it is sufficient to focus on the single-agent problem \eqref{single-game2}-\eqref{both-dynamic-reduce} with $y=x^1-x^2$. Following the standard analysis \citep{BSW1980,karatzas1982}, the HJB equation for the one-dimensional control problem follows \eqref{single-game2}-\eqref{both-dynamic-reduce} is
\begin{eqnarray}\label{hjb-w}
\max \left\{ \rho  {u}(x) - h(x) - \frac{\widetilde{\sigma}^2}{2}{u}^{\prime \prime}(x),\,{u}^{\prime}(x)-\frac{K_2}{2},\,-{u}^{\prime}(x)-\frac{K_2}{2} \right\} = 0.
\end{eqnarray}
There is a $\mathcal{C}^2$ solution \citep{BSW1980,karatzas1982} given by
\begin{eqnarray} \label{hjb-solution-central}
{u}(x) =\left\{
                \begin{array}{ll}
                  -\frac{\widetilde{\sigma}^2p_1^{\prime \prime}(\widetilde{c}_1) \cosh \left(x\frac{\sqrt{2 \rho }}{\widetilde{\sigma}}\right)}{2 \rho  \cosh \left(\widetilde{c}_1 \frac{\sqrt{2 \rho }}{\widetilde{\sigma}} \right)} + p_1(x), \qquad \ \  0 \leq & x \leq \widetilde{c}_1,\\
                  u(\widetilde{c}_1)+\frac{K_2}{2}(x-\widetilde{c}_1), \qquad &x \geq \widetilde{c}_1,\\
                 u(-x), \qquad & x <0,
                \end{array}
              \right.
\end{eqnarray}
where $\widetilde{c}_1$ is the unique positive solution to \eqref{c1_tilde} and $p_1(x)$ is defined as in \eqref{p1}. The corresponding control of the regulator is a bang-bang type such that \eqref{distribution2}-\eqref{distribution3} hold. Furthermore, it is easy to see that $v(x^1,x^2):=u(x^1-x^2)$, with $u(x)$ defined in (\ref{hjb-solution-central}), is indeed the value function of problem \eqref{single-game}.

Next when $K_1=K_2$, $\xi^{1,+}$ and $\xi^{2,-}$ controls $Y_t$ in the same direction with  the same cost.  The same holds for  $\xi^{2,+}$ or $\xi^{1,-}$, hence the Pareto-optimal policy \eqref{distribution} {and \eqref{distribution_not_unique}.}

\end{proof}

\subsection{Benefits of regulation: Pareto optimum vs Nash equilibrium} 
We now use the above analytical results to compare the Pareto-optimal strategies with   Nash equilibrium strategies, whose definition we  recall:

\begin{definition}[Nash equilibrium]\label{nash} {\rm
 $ {\pmb{\eta}} = \left({\eta}^{1},\ldots,{\eta}^{N}\right) \in \mathcal{U}_N$ is a Nash equilibrium strategy of the stochastic game
({\bf{N-Player}}), if for any $ i=1,\ldots, N$, $\pmb{X}_{0-} = \pmb{x}$, and  any $\left( {\pmb{\eta}}^{-i},{\xi}^i\right) \in  \mathcal{U}_N$,
 the following inequality holds,
\begin{eqnarray*}
J^i \left(\pmb{x};  {\pmb{\eta}}\right)  \leq J^i \left(\pmb{x}; \left( {\pmb{\eta}}^{-i}, {\xi}^{i}\right)\right).
\end{eqnarray*} 
 $v^i(\pmb{x}):=J^i \left(\pmb{x};  {\pmb{\eta}}\right)$ is called the Nash equilibrium value for player $i$ associated with  $ {\pmb{\eta}}$.}
\end{definition}

\begin{proposition}[Pareto optimum vs Nash equilibrium solutions for $N=2$ players] \label{POvsNE}
Assume   \textbf{B1}-\textbf{B3} and $K_1=K_2=K$.
\begin{enumerate}[font=\bfseries,leftmargin=1.5\parindent]
\item[{\rm (i)}] The following controls give a Nash equilibrium policy to  game \eqref{both-dynamic}-\eqref{single-game}:
\begin{eqnarray}\label{nash-opt-control}
\begin{aligned} 
({\eta}_t^{1,+} ,{\eta}_t^{1,-} )&=\left(0, \max \left\{0,\max_{0 \leq u \leq t} \left \{(x^1-x^2) + (\pmb{\sigma}^1- \pmb{\sigma}^2)\cdot \pmb{B}_u  +{\eta}_u^{2,-} - {c}_2 \right\} \right\} \right),\\
({\eta}_t^{2,+} ,{\eta}_t^{2,-} )&= \left(0,\max \left\{0,\max_{0 \leq u \leq t}  \left \{-(x^1-x^2) +( \pmb{\sigma}^2- \pmb{\sigma}^1)\cdot \pmb{B}_u+{\eta}_u^{1,-}  - {c}_2 \right \}\right \} \right),
\end{aligned}
\end{eqnarray}
 where $c_2>0$ is the unique positive solution to
\begin{eqnarray}\label{c0}
\frac{\widetilde{\sigma}}{\sqrt{2\rho }} \tanh\left(\frac{\sqrt{2\rho }}{\widetilde{\sigma}}x \right)= \frac{p_1^{\prime}(x)-K}{p_1^{\prime \prime}(x)},
\end{eqnarray}
with $p_1$ defined in \eqref{p1}.
The  value functions $v^1$ and $v^2$ corresponding to the Nash equilibrium $\left( {\pmb{\eta}}^{1} , {\pmb{\eta}}^{2}\right)$ defined in (\ref{nash-opt-control}) are
  \begin{eqnarray}\label{v1}
    v^{1}(x^1,x^2)=\left\{
                \begin{array}{ll}
                v^1( x^2-c_2,x^2) , &   x^1 \leq x^2-c_2, \\
                -\frac{\widetilde{\sigma}^2p_1^{\prime \prime}(c_2) \cosh\left(\frac{\sqrt{2\rho }}{\widetilde{\sigma}}(x^1-x^2)\right)}{2\rho   \cosh \left(c_2\frac{\sqrt{2\rho }}{\widetilde{\sigma}}\right)} + p_1(x^1-x^2), & x^2-c_2 \leq x^1 \leq x^2 + c_2,\\
        K (x^1 -x^2- c_2)  +v^1(x^2+c_2,x^2),    & x^1 \geq x^2  +c_2,
                \end{array}
              \right.
 \end{eqnarray}
and
\begin{eqnarray}\label{v2}
    v^{2}(x^1,x^2)=\left\{
                \begin{array}{ll}
                v^2( x^1,x^1-c_2), & x^2 \leq x^1-c_2, \\
                  -\frac{\widetilde{\sigma}^2p_1^{\prime \prime}(c_2) \cosh \left(\frac{\sqrt{2\rho }}{\widetilde{\sigma}}(x^2-x^1)\right)}{2\rho   \cosh \left(c_2\frac{\sqrt{2\rho }}{\widetilde{\sigma}}\right)} + p_1(x^2-x^1),& x^1 -c_2 \leq x^2 \leq x^1  + c_2,\\
         K( x^2 -x^1- c_2)  +v^2(x^1,x^1+c_2),       & x^2 \geq x^1   +c_2;
          
                \end{array}
              \right.
  \end{eqnarray} 
 \item[{\rm (ii)}]  $c_2>c_1$, where $c_1$ is the unique positive solution to \eqref{c1} and $c_2$ is the unique positive solution to \eqref{c0}.  
 \end{enumerate}
\end{proposition}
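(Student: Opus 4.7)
For part (i), the plan is to mirror the proof of Proposition \ref{prop:PO}, replacing the aggregated cost factor $K/2$ by the individual cost $K$. Using the translation invariance of the game (the payoffs depend only on $X^1-X^2$), I would reduce player $i$'s best-response problem to a one-dimensional singular control problem for the process $Y_t = X^1_t - X^2_t$. Fixing player 2's strategy as in \eqref{nash-opt-control}, player 1 faces a singular control of $Y_t$ with control cost $K$ per unit of $d\xi^{1,-}$, while player 2's local time at the opposite threshold is free to player 1. Writing $r = \sqrt{2\rho}/\widetilde\sigma$, the HJB equation on the continuation band is $\max\{\rho \tilde v - h - \tfrac{\widetilde\sigma^2}{2}\tilde v'', \tilde v' - K, -\tilde v' - K\}=0$, whose $\mathcal{C}^2$ solution (by the classical singular-control theory of \citep{BSW1980,karatzas1982}) has the form $\tilde v(y)=A\cosh(yr)+p_1(y)$ inside the continuation region. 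The smooth-pasting conditions $\tilde v'(c_2)=K$ and $\tilde v''(c_2)=0$ at the free boundary yield $A = -\widetilde\sigma^2 p_1''(c_2)/(2\rho\cosh(c_2 r))$ and \eqref{c0}. Formulas \eqref{v1}, \eqref{v2} then follow by translating back to the $(x^1,x^2)$ coordinates and handling the jump regions by the initial-impulse argument of Theorem \ref{epsilon_out}. An Itô-formula verification argument applied to $e^{-\rho t}\tilde v(Y_t)$ confirms that \eqref{nash-opt-control} is indeed a mutual best response.

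For part (ii), define
\[
\Phi(c) := p_1'(c) - \frac{\widetilde\sigma}{\sqrt{2\rho}}\,p_1''(c)\,\tanh\!\left(c\sqrt{2\rho}/\widetilde\sigma\right).
\]
Then \eqref{c1} rewrites as $\Phi(c_1)=K/2$ and \eqref{c0} as $\Phi(c_2)=K$. Since $h$ is symmetric by \textbf{B2}, $p_1$ is even and $p_1'(0)=0$, hence $\Phi(0)=0$; moreover, $\Phi$ is continuous since $p_1\in \mathcal{C}^3$ and $\tanh$ is smooth. I would proceed by contradiction: if $c_2\leq c_1$, then since $\Phi(0)=0 < K = \Phi(c_2)$ and $\Phi$ is continuous on $[0,c_2]$, the intermediate value theorem yields some $\hat c \in (0, c_2]$ with $\Phi(\hat c) = K/2$. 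By the stated uniqueness of the positive solution to \eqref{c1}, this forces $\hat c = c_1$, so $c_1\leq c_2\leq c_1$, giving $c_1=c_2$. Evaluating $\Phi$ then yields $K/2 = \Phi(c_1) = \Phi(c_2) = K$, a contradiction. Hence $c_2 > c_1$.

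The main obstacle I anticipate is in part (i), specifically the careful treatment of the boundary condition at the threshold where the opposing player's free reflection takes place. Unlike the Pareto case analyzed in Proposition \ref{prop:PO} (where a single controller sets symmetric boundaries), here the asymmetry between the ``paying'' boundary (where player $i$ incurs cost $K$ and smooth fit of order two holds) and the ``free'' boundary (where the opponent's local time acts at no cost to player $i$) must be reconciled with the symmetry $v^1(x^1,x^2)=v^2(x^2,x^1)$ of the Nash equilibrium. Matching these conditions self-consistently — so that the symmetric ansatz of $\tilde v$ as an even function of $y$ agrees with the asymmetric cost structure faced by each player and selects $c_2$ via \eqref{c0} — is the delicate step and must be justified through a verification theorem rather than a direct HJB characterization.
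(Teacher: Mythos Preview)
For part~(i) your plan parallels the paper's, which likewise appeals to a variational characterisation and a verification theorem (citing \cite{GX2019}). One point to sharpen: the equation you write is the HJB of a \emph{single} controller with symmetric cost $K$ on both sides, whereas the paper works with the coupled quasi-variational inequalities in which each $v^i$ satisfies its own HJB only on the opponent's continuation region $\{-K<\partial_{x^j}v^j<K\}$, together with $\partial_{x^i}v^i=0$ where the opponent acts. Concretely, the candidate $\tilde v^1(y)$ from \eqref{v1} has $(\tilde v^1)'(-c_2^+)=-K$ from the interior but is constant for $y<-c_2$, so it is not $\mathcal{C}^1$ across the opponent's boundary; this kink is consistent with the QVI but is not captured by a single symmetric HJB with second-order smooth fit on both sides. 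You correctly anticipate this difficulty in your final paragraph, and the resolution is precisely the QVI/verification route the paper takes.

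For part~(ii) your argument is correct and takes a genuinely different route from the paper. The paper compares the two defining equations geometrically: setting $g(x)=\frac{\widetilde\sigma}{\sqrt{2\rho}}\tanh\big(\sqrt{2\rho}\,x/\widetilde\sigma\big)$ and $g_k(x)=\big(p_1'(x)-kK/2\big)/p_1''(x)$ for $k=1,2$, it observes that $g_1-g_2=\tfrac{K}{2p_1''}>0$ pointwise, and combines this with monotonicity properties of $g$ and of the $g_k$ (derived from Assumption~\textbf{B2}) to locate the two intersections. Your approach instead repackages both equations as level sets $\Phi(c_1)=K/2$, $\Phi(c_2)=K$ of a single continuous function with $\Phi(0)=0$, and then uses the intermediate value theorem together with the \emph{stated} uniqueness of $c_1$ to reach a contradiction. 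This is shorter because it bypasses the monotonicity analysis entirely; the trade-off is that it consumes the uniqueness of $c_1$ as an input, whereas the paper's curve comparison essentially re-derives it along the way.
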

That is, a Pareto-optimal policy yields a tighter threshold for spreads,  hence reduces volatility of  interbank rates compared to the Nash equilibrium (see Figure \ref{figure3}).
\begin{figure}[H]  
       
     \centering \includegraphics[width=0.6\columnwidth]{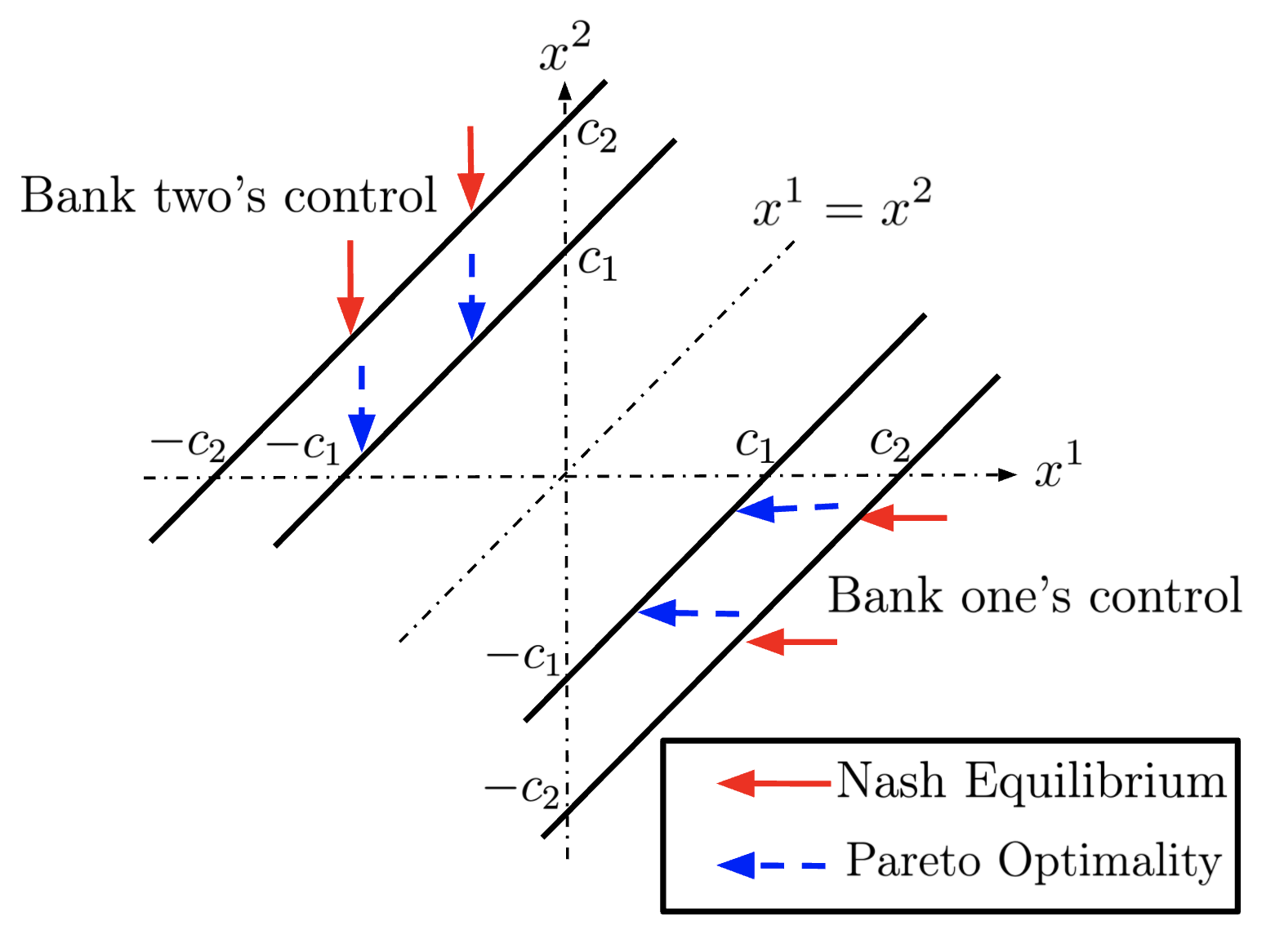}
       
        \caption{
                \label{figure3} 
               Comparison: Nash and Pareto ($K_1=K_2$).
        }
\end{figure}

\begin{proof}
Similar to the derivation in \citep{GX2019}, we have the following quasi-variational inequalities for the Nash equilibrium of game \eqref{both-dynamic} with $J^1$ and $J^2$ and $K_1=K_2=K$,
\begin{eqnarray}\label{qvi}
    \begin{cases} 
   & \max\left\{\rho  v^i(x^1,x^2) - h(x^1-x^2)-\frac{\widetilde{\sigma}^2}{2}\Big(\partial^2_{x^1}v^i(x^1,x^2)+\partial^2_{x^2}v^i(x^1,x^2)\Big),\right.\\
 &\hspace{150pt} \partial_{x^i}v^i(x^1,x^2)-K,-\partial_{x^i}v^i(x^1,x^2)-K\Big\} = 0,\nonumber \\
    &\hspace{210pt} \mbox{on} \hspace{10pt}\left\{(x^1,x^2): -K<\partial_{x^j}v^j (x^1,x^2)<K\right\},\\
  & \partial_{x^i} v^i(x^1,x^2) = 0, \hspace{120pt} \mbox{on} \hspace{10pt} \left\{(x^1,x^2): \partial_{x^j}v^j(x^1,x^2) = K \text{ or }\partial_{x^j}v^j (x^1,x^2)= -K\right\},
\end{cases}
\end{eqnarray}
for $i\neq j$ and $1\leq i,j \leq 2$.
Moreover, one can show that \eqref{v1}-\eqref{v2} are the solution to \eqref{qvi}. Applying a verification theorem \citep[
Theorem 3]{GX2019}, some further calculations can verify that  \eqref{v1}-\eqref{v2} are the game values associated with the Nash equilibrium policy \eqref{nash-opt-control}.

Now we provide the proof for Claim (ii). Define $g(x) =  \frac{\widetilde{\sigma}}{\sqrt{2\rho }} \tanh \left(\frac{\sqrt{2\rho }}{\widetilde{\sigma}}x\right)$, $g_1(x) = \frac{p_1^{\prime}(x)-\frac{K}{2}}{p_1^{\prime\prime}(x)}$ and $g_2(x) = \frac{p_1^{\prime}(x)-K}{p_1^{\prime\prime}(x)}$, where $p_1$ is defined in (\ref{p1}). Then $g(0)=0$,
$g^{\prime}(x)>0$ $\textit{ for any } x \in \mathbb{R^+}$, and $lim_{x \rightarrow \infty}g(x) = \frac{\widetilde{\sigma}}{\sqrt{2\rho }}$. Thanks to Assumption $({\bf B2})$, $$0<\frac{c}{\rho } \leq p_1^{\prime \prime}(x) = \mathbb{E} \int_0^{\infty} e^{-\rho t} h^{\prime \prime}({x}+\widetilde{\sigma}B_t)dt \leq \frac{C}{\rho }.$$
The function $p^{\prime}_1(x)$ is negative at $x=0$ and increases monotonically to $\infty$ on $\mathbb{R}^+$. Hence there exists an unique positive zero ${c}_0$.  Moreover, for any $ x >{c}_0$,
$g_1^{\prime}(x) = 1-\frac{p^{\prime\prime\prime}_1(x)}{p^{\prime\prime}_1(x)} g_1(x) \geq 1$. This is because $p^{\prime\prime\prime}_1(x) \leq 0 $ for $x \geq 0$.
We conclude that there exists a unique point ${c}_0<c_1<\infty$ such that $g(c_1) = g_1(c_1)$.

Now by similar analysis, $c_2$ is the unique solution to $g(x) = g_2(x)$ such that $0<c_2<\infty$. Notice that, $g_1(x)-g_2(x) = \frac{K}{2p_1^{\prime \prime}(x)}>0$ because  $p_2^{\prime \prime}(x)>0$. Hence $c_2>c_1$.
\end{proof}

\bibliographystyle{apacite}
\bibliography{PO.bib}

\begin{thebibliography}{}

\bibitem [\protect \citeauthoryear {%
A{\"\i}d%
, Basei%
\BCBL {}\ \BBA {} Pham%
}{%
A{\"\i}d%
\ \protect \BOthers {.}}{%
{\protect \APACyear {2017}}%
}]{%
ABP2017}
\APACinsertmetastar {%
ABP2017}%
\begin{APACrefauthors}%
A{\"\i}d, R.%
, Basei, M.%
\BCBL {}\ \BBA {} Pham, H.%
\end{APACrefauthors}%
\unskip\
\newblock
\APACrefYearMonthDay{2017}{}{}.
\newblock
{\BBOQ}\APACrefatitle {The coordination of centralised and distributed
  generation} {The coordination of centralised and distributed
  generation}.{\BBCQ}
\newblock
\APACjournalVolNumPages{arXiv preprint arXiv:1705.01302}{}{}{}.
\PrintBackRefs{\CurrentBib}

\bibitem [\protect \citeauthoryear {%
Avellaneda%
\ \BBA {} Cont%
}{%
Avellaneda%
\ \BBA {} Cont%
}{%
{\protect \APACyear {2010}}%
}]{%
avellaneda2010}
\APACinsertmetastar {%
avellaneda2010}%
\begin{APACrefauthors}%
Avellaneda, M.%
\BCBT {}\ \BBA {} Cont, R.%
\end{APACrefauthors}%
\unskip\
\newblock
\APACrefYearMonthDay{2010}{}{}.
\newblock
\APACrefbtitle {Transparency in over-the-counter interest rate derivatives
  markets} {Transparency in over-the-counter interest rate derivatives
  markets}\ \APACbVolEdTR {}{Report}.
\newblock
\APACaddressInstitution{}{Finance Concepts}.
\PrintBackRefs{\CurrentBib}

\bibitem [\protect \citeauthoryear {%
Bator%
}{%
Bator%
}{%
{\protect \APACyear {1957}}%
}]{%
Bator1957}
\APACinsertmetastar {%
Bator1957}%
\begin{APACrefauthors}%
Bator, F\BPBI M.%
\end{APACrefauthors}%
\unskip\
\newblock
\APACrefYearMonthDay{1957}{}{}.
\newblock
{\BBOQ}\APACrefatitle {The simple analytics of welfare maximization} {The
  simple analytics of welfare maximization}.{\BBCQ}
\newblock
\APACjournalVolNumPages{The American Economic Review}{47}{1}{22--59}.
\PrintBackRefs{\CurrentBib}

\bibitem [\protect \citeauthoryear {%
Bene{\v{s}}%
, Shepp%
\BCBL {}\ \BBA {} Witsenhausen%
}{%
Bene{\v{s}}%
\ \protect \BOthers {.}}{%
{\protect \APACyear {1980}}%
}]{%
BSW1980}
\APACinsertmetastar {%
BSW1980}%
\begin{APACrefauthors}%
Bene{\v{s}}, V\BPBI E.%
, Shepp, L\BPBI A.%
\BCBL {}\ \BBA {} Witsenhausen, H\BPBI S.%
\end{APACrefauthors}%
\unskip\
\newblock
\APACrefYearMonthDay{1980}{}{}.
\newblock
{\BBOQ}\APACrefatitle {Some solvable stochastic control problems} {Some
  solvable stochastic control problems}.{\BBCQ}
\newblock
\APACjournalVolNumPages{Stochastics: An International Journal of Probability
  and Stochastic Processes}{4}{1}{39--83}.
\PrintBackRefs{\CurrentBib}

\bibitem [\protect \citeauthoryear {%
Bensoussan%
, Long%
, Perera%
\BCBL {}\ \BBA {} Sethi%
}{%
Bensoussan%
\ \protect \BOthers {.}}{%
{\protect \APACyear {2012}}%
}]{%
bensoussan2012}
\APACinsertmetastar {%
bensoussan2012}%
\begin{APACrefauthors}%
Bensoussan, A.%
, Long, H.%
, Perera, S.%
\BCBL {}\ \BBA {} Sethi, S.%
\end{APACrefauthors}%
\unskip\
\newblock
\APACrefYearMonthDay{2012}{}{}.
\newblock
{\BBOQ}\APACrefatitle {Impulse control with random reaction periods: a central
  bank intervention problem} {Impulse control with random reaction periods: a
  central bank intervention problem}.{\BBCQ}
\newblock
\APACjournalVolNumPages{{Operations Research Letters}}{40}{6}{425--430}.
\PrintBackRefs{\CurrentBib}

\bibitem [\protect \citeauthoryear {%
Brezis%
}{%
Brezis%
}{%
{\protect \APACyear {2010}}%
}]{%
brezis2010functional}
\APACinsertmetastar {%
brezis2010functional}%
\begin{APACrefauthors}%
Brezis, H.%
\end{APACrefauthors}%
\unskip\
\newblock
\APACrefYear{2010}.
\newblock
\APACrefbtitle {Functional {A}nalysis, {S}obolev {S}paces and {P}artial
  {D}ifferential {E}quations} {Functional {A}nalysis, {S}obolev {S}paces and
  {P}artial {D}ifferential {E}quations}.
\newblock
\APACaddressPublisher{}{Springer}.
\PrintBackRefs{\CurrentBib}

\bibitem [\protect \citeauthoryear {%
Cadenillas%
\ \BBA {} Zapatero%
}{%
Cadenillas%
\ \BBA {} Zapatero%
}{%
{\protect \APACyear {2000}}%
}]{%
cadenillas2000}
\APACinsertmetastar {%
cadenillas2000}%
\begin{APACrefauthors}%
Cadenillas, A.%
\BCBT {}\ \BBA {} Zapatero, F.%
\end{APACrefauthors}%
\unskip\
\newblock
\APACrefYearMonthDay{2000}{}{}.
\newblock
{\BBOQ}\APACrefatitle {Classical and impulse stochastic control of the exchange
  rate using interest rates and reserves} {Classical and impulse stochastic
  control of the exchange rate using interest rates and reserves}.{\BBCQ}
\newblock
\APACjournalVolNumPages{Mathematical Finance}{10}{2}{141--156}.
\PrintBackRefs{\CurrentBib}

\bibitem [\protect \citeauthoryear {%
Carlen%
\ \BBA {} Protter%
}{%
Carlen%
\ \BBA {} Protter%
}{%
{\protect \APACyear {1992}}%
}]{%
carlen1992semimartingale}
\APACinsertmetastar {%
carlen1992semimartingale}%
\begin{APACrefauthors}%
Carlen, E.%
\BCBT {}\ \BBA {} Protter, P.%
\end{APACrefauthors}%
\unskip\
\newblock
\APACrefYearMonthDay{1992}{}{}.
\newblock
{\BBOQ}\APACrefatitle {On semimartingale decompositions of convex functions of
  semimartingales} {On semimartingale decompositions of convex functions of
  semimartingales}.{\BBCQ}
\newblock
\APACjournalVolNumPages{Illinois journal of mathematics}{36}{3}{420--427}.
\PrintBackRefs{\CurrentBib}

\bibitem [\protect \citeauthoryear {%
Carmona%
, Fouque%
\BCBL {}\ \BBA {} Sun%
}{%
Carmona%
\ \protect \BOthers {.}}{%
{\protect \APACyear {2015}}%
}]{%
CFS2013}
\APACinsertmetastar {%
CFS2013}%
\begin{APACrefauthors}%
Carmona, R.%
, Fouque, J\BHBI P.%
\BCBL {}\ \BBA {} Sun, L\BHBI H.%
\end{APACrefauthors}%
\unskip\
\newblock
\APACrefYearMonthDay{2015}{}{}.
\newblock
{\BBOQ}\APACrefatitle {Mean field games and systemic risk} {Mean field games
  and systemic risk}.{\BBCQ}
\newblock
\APACjournalVolNumPages{Communications in Mathematical
  Sciences}{13}{4}{911--933}.
\PrintBackRefs{\CurrentBib}

\bibitem [\protect \citeauthoryear {%
Chiarolla%
, Ferrari%
\BCBL {}\ \BBA {} Riedel%
}{%
Chiarolla%
\ \protect \BOthers {.}}{%
{\protect \APACyear {2013}}%
}]{%
CFR2013}
\APACinsertmetastar {%
CFR2013}%
\begin{APACrefauthors}%
Chiarolla, M\BPBI B.%
, Ferrari, G.%
\BCBL {}\ \BBA {} Riedel, F.%
\end{APACrefauthors}%
\unskip\
\newblock
\APACrefYearMonthDay{2013}{}{}.
\newblock
{\BBOQ}\APACrefatitle {Generalized {K}uhn--{T}ucker Conditions for {N}-Firm
  Stochastic Irreversible Investment under Limited Resources} {Generalized
  {K}uhn--{T}ucker conditions for {N}-firm stochastic irreversible investment
  under limited resources}.{\BBCQ}
\newblock
\APACjournalVolNumPages{SIAM Journal on Control and
  Optimization}{51}{5}{3863--3885}.
\PrintBackRefs{\CurrentBib}

\bibitem [\protect \citeauthoryear {%
Coleman%
}{%
Coleman%
}{%
{\protect \APACyear {1979}}%
}]{%
Coleman1979}
\APACinsertmetastar {%
Coleman1979}%
\begin{APACrefauthors}%
Coleman, J\BPBI L.%
\end{APACrefauthors}%
\unskip\
\newblock
\APACrefYearMonthDay{1979}{}{}.
\newblock
{\BBOQ}\APACrefatitle {Efficiency, utility, and wealth maximization}
  {Efficiency, utility, and wealth maximization}.{\BBCQ}
\newblock
\APACjournalVolNumPages{Hofstra L. Rev.}{8}{}{509}.
\PrintBackRefs{\CurrentBib}

\bibitem [\protect \citeauthoryear {%
Davis%
\ \BBA {} Norman%
}{%
Davis%
\ \BBA {} Norman%
}{%
{\protect \APACyear {1990}}%
}]{%
davis1990portfolio}
\APACinsertmetastar {%
davis1990portfolio}%
\begin{APACrefauthors}%
Davis, M\BPBI H.%
\BCBT {}\ \BBA {} Norman, A\BPBI R.%
\end{APACrefauthors}%
\unskip\
\newblock
\APACrefYearMonthDay{1990}{}{}.
\newblock
{\BBOQ}\APACrefatitle {Portfolio selection with transaction costs} {Portfolio
  selection with transaction costs}.{\BBCQ}
\newblock
\APACjournalVolNumPages{Mathematics of Operations Research}{15}{4}{676--713}.
\PrintBackRefs{\CurrentBib}

\bibitem [\protect \citeauthoryear {%
Davis%
, Panas%
\BCBL {}\ \BBA {} Zariphopoulou%
}{%
Davis%
\ \protect \BOthers {.}}{%
{\protect \APACyear {1993}}%
}]{%
davis1993european}
\APACinsertmetastar {%
davis1993european}%
\begin{APACrefauthors}%
Davis, M\BPBI H.%
, Panas, V\BPBI G.%
\BCBL {}\ \BBA {} Zariphopoulou, T.%
\end{APACrefauthors}%
\unskip\
\newblock
\APACrefYearMonthDay{1993}{}{}.
\newblock
{\BBOQ}\APACrefatitle {European option pricing with transaction costs}
  {European option pricing with transaction costs}.{\BBCQ}
\newblock
\APACjournalVolNumPages{SIAM Journal on Control and
  Optimization}{31}{2}{470--493}.
\PrintBackRefs{\CurrentBib}

\bibitem [\protect \citeauthoryear {%
De~Angelis%
\ \BBA {} Ferrari%
}{%
De~Angelis%
\ \BBA {} Ferrari%
}{%
{\protect \APACyear {2018}}%
}]{%
DF2018}
\APACinsertmetastar {%
DF2018}%
\begin{APACrefauthors}%
De~Angelis, T.%
\BCBT {}\ \BBA {} Ferrari, G.%
\end{APACrefauthors}%
\unskip\
\newblock
\APACrefYearMonthDay{2018}{}{}.
\newblock
{\BBOQ}\APACrefatitle {Stochastic nonzero-sum games: a new connection between
  singular control and optimal stopping} {Stochastic nonzero-sum games: a new
  connection between singular control and optimal stopping}.{\BBCQ}
\newblock
\APACjournalVolNumPages{Advances in Applied Probability}{50}{2}{347--372}.
\PrintBackRefs{\CurrentBib}

\bibitem [\protect \citeauthoryear {%
Dianetti%
\ \BBA {} Ferrari%
}{%
Dianetti%
\ \BBA {} Ferrari%
}{%
{\protect \APACyear {2020}}%
}]{%
dianetti2020nonzero}
\APACinsertmetastar {%
dianetti2020nonzero}%
\begin{APACrefauthors}%
Dianetti, J.%
\BCBT {}\ \BBA {} Ferrari, G.%
\end{APACrefauthors}%
\unskip\
\newblock
\APACrefYearMonthDay{2020}{}{}.
\newblock
{\BBOQ}\APACrefatitle {Nonzero-Sum Submodular Monotone-Follower Games:
  existence and Approximation of {N}ash Equilibria} {Nonzero-sum submodular
  monotone-follower games: existence and approximation of {N}ash
  equilibria}.{\BBCQ}
\newblock
\APACjournalVolNumPages{SIAM Journal on Control and
  Optimization}{58}{3}{1257--1288}.
\PrintBackRefs{\CurrentBib}

\bibitem [\protect \citeauthoryear {%
Duffie%
\ \BBA {} Stein%
}{%
Duffie%
\ \BBA {} Stein%
}{%
{\protect \APACyear {2015}}%
}]{%
duffie2015}
\APACinsertmetastar {%
duffie2015}%
\begin{APACrefauthors}%
Duffie, D.%
\BCBT {}\ \BBA {} Stein, J\BPBI C.%
\end{APACrefauthors}%
\unskip\
\newblock
\APACrefYearMonthDay{2015}{}{}.
\newblock
{\BBOQ}\APACrefatitle {Reforming {LIBOR} and other financial market benchmarks}
  {Reforming {LIBOR} and other financial market benchmarks}.{\BBCQ}
\newblock
\APACjournalVolNumPages{Journal of Economic Perspectives}{29}{2}{191--212}.
\PrintBackRefs{\CurrentBib}

\bibitem [\protect \citeauthoryear {%
Dupuis%
\ \BBA {} Ishii%
}{%
Dupuis%
\ \BBA {} Ishii%
}{%
{\protect \APACyear {1991}}%
}]{%
DI1991}
\APACinsertmetastar {%
DI1991}%
\begin{APACrefauthors}%
Dupuis, P.%
\BCBT {}\ \BBA {} Ishii, H.%
\end{APACrefauthors}%
\unskip\
\newblock
\APACrefYearMonthDay{1991}{}{}.
\newblock
{\BBOQ}\APACrefatitle {On {L}ipschitz continuity of the solution mapping to the
  {S}korokhod problem, with applications} {On {L}ipschitz continuity of the
  solution mapping to the {S}korokhod problem, with applications}.{\BBCQ}
\newblock
\APACjournalVolNumPages{Stochastics}{35}{1}{31--62}.
\PrintBackRefs{\CurrentBib}

\bibitem [\protect \citeauthoryear {%
Dupuis%
\ \BBA {} Ishii%
}{%
Dupuis%
\ \BBA {} Ishii%
}{%
{\protect \APACyear {1993}}%
}]{%
DI1993}
\APACinsertmetastar {%
DI1993}%
\begin{APACrefauthors}%
Dupuis, P.%
\BCBT {}\ \BBA {} Ishii, H.%
\end{APACrefauthors}%
\unskip\
\newblock
\APACrefYearMonthDay{1993}{}{}.
\newblock
{\BBOQ}\APACrefatitle {{SDEs with Oblique Reflection on Nonsmooth Domains}}
  {{SDEs with Oblique Reflection on Nonsmooth Domains}}.{\BBCQ}
\newblock
\APACjournalVolNumPages{{Annals of Probability}}{21}{1}{554 -- 580}.
\newblock
\begin{APACrefDOI} \doi{10.1214/aop/1176989415} \end{APACrefDOI}
\PrintBackRefs{\CurrentBib}

\bibitem [\protect \citeauthoryear {%
Evans%
}{%
Evans%
}{%
{\protect \APACyear {1990}}%
}]{%
evans1990weak}
\APACinsertmetastar {%
evans1990weak}%
\begin{APACrefauthors}%
Evans, L\BPBI C.%
\end{APACrefauthors}%
\unskip\
\newblock
\APACrefYear{1990}.
\newblock
\APACrefbtitle {Weak Convergence Methods for Nonlinear Partial Differential
  Equations} {Weak convergence methods for nonlinear partial differential
  equations}.
\newblock
\APACaddressPublisher{}{American Mathematical Society}.
\PrintBackRefs{\CurrentBib}

\bibitem [\protect \citeauthoryear {%
Ferrari%
, Riedel%
\BCBL {}\ \BBA {} Steg%
}{%
Ferrari%
\ \protect \BOthers {.}}{%
{\protect \APACyear {2017}}%
}]{%
FRS2017}
\APACinsertmetastar {%
FRS2017}%
\begin{APACrefauthors}%
Ferrari, G.%
, Riedel, F.%
\BCBL {}\ \BBA {} Steg, J\BHBI H.%
\end{APACrefauthors}%
\unskip\
\newblock
\APACrefYearMonthDay{2017}{}{}.
\newblock
{\BBOQ}\APACrefatitle {Continuous-time public good contribution under
  uncertainty: {a} stochastic control approach} {Continuous-time public good
  contribution under uncertainty: {a} stochastic control approach}.{\BBCQ}
\newblock
\APACjournalVolNumPages{Applied Mathematics \& Optimization}{75}{3}{429--470}.
\PrintBackRefs{\CurrentBib}

\bibitem [\protect \citeauthoryear {%
Fischer%
\ \BBA {} Livieri%
}{%
Fischer%
\ \BBA {} Livieri%
}{%
{\protect \APACyear {2016}}%
}]{%
FL2016}
\APACinsertmetastar {%
FL2016}%
\begin{APACrefauthors}%
Fischer, M.%
\BCBT {}\ \BBA {} Livieri, G.%
\end{APACrefauthors}%
\unskip\
\newblock
\APACrefYearMonthDay{2016}{}{}.
\newblock
{\BBOQ}\APACrefatitle {Continuous time mean-variance portfolio optimization
  through the mean field approach} {Continuous time mean-variance portfolio
  optimization through the mean field approach}.{\BBCQ}
\newblock
\APACjournalVolNumPages{ESAIM: Probability and Statistics}{20}{}{30--44}.
\PrintBackRefs{\CurrentBib}

\bibitem [\protect \citeauthoryear {%
Gilbarg%
\ \BBA {} Trudinger%
}{%
Gilbarg%
\ \BBA {} Trudinger%
}{%
{\protect \APACyear {2015}}%
}]{%
GT2015}
\APACinsertmetastar {%
GT2015}%
\begin{APACrefauthors}%
Gilbarg, D.%
\BCBT {}\ \BBA {} Trudinger, N\BPBI S.%
\end{APACrefauthors}%
\unskip\
\newblock
\APACrefYear{2015}.
\newblock
\APACrefbtitle {Elliptic {P}artial {D}ifferential {E}quations of {S}econd
  {O}rder} {Elliptic {P}artial {D}ifferential {E}quations of {S}econd {O}rder}.
\newblock
\APACaddressPublisher{}{Springer}.
\PrintBackRefs{\CurrentBib}

\bibitem [\protect \citeauthoryear {%
Gomes%
, Mohr%
\BCBL {}\ \BBA {} Souza%
}{%
Gomes%
\ \protect \BOthers {.}}{%
{\protect \APACyear {2010}}%
}]{%
GMS2010}
\APACinsertmetastar {%
GMS2010}%
\begin{APACrefauthors}%
Gomes, D\BPBI A.%
, Mohr, J.%
\BCBL {}\ \BBA {} Souza, R\BPBI R.%
\end{APACrefauthors}%
\unskip\
\newblock
\APACrefYearMonthDay{2010}{}{}.
\newblock
{\BBOQ}\APACrefatitle {Discrete time, finite state space mean field games}
  {Discrete time, finite state space mean field games}.{\BBCQ}
\newblock
\APACjournalVolNumPages{Journal de Math{\'e}matiques Pures et
  Appliqu{\'e}es}{93}{3}{308--328}.
\PrintBackRefs{\CurrentBib}

\bibitem [\protect \citeauthoryear {%
Guo%
\ \BBA {} Pham%
}{%
Guo%
\ \BBA {} Pham%
}{%
{\protect \APACyear {2005}}%
}]{%
GP2005}
\APACinsertmetastar {%
GP2005}%
\begin{APACrefauthors}%
Guo, X.%
\BCBT {}\ \BBA {} Pham, H.%
\end{APACrefauthors}%
\unskip\
\newblock
\APACrefYearMonthDay{2005}{}{}.
\newblock
{\BBOQ}\APACrefatitle {Optimal partially reversible investment with entry
  decision and general production function} {Optimal partially reversible
  investment with entry decision and general production function}.{\BBCQ}
\newblock
\APACjournalVolNumPages{Stochastic Processes and their
  Applications}{115}{5}{705--736}.
\PrintBackRefs{\CurrentBib}

\bibitem [\protect \citeauthoryear {%
Guo%
\ \BBA {} Xu%
}{%
Guo%
\ \BBA {} Xu%
}{%
{\protect \APACyear {2019}}%
}]{%
GX2019}
\APACinsertmetastar {%
GX2019}%
\begin{APACrefauthors}%
Guo, X.%
\BCBT {}\ \BBA {} Xu, R.%
\end{APACrefauthors}%
\unskip\
\newblock
\APACrefYearMonthDay{2019}{}{}.
\newblock
{\BBOQ}\APACrefatitle {Stochastic Games for Fuel Follower Problem: {N} versus
  {MFG}} {Stochastic games for fuel follower problem: {N} versus {MFG}}.{\BBCQ}
\newblock
\APACjournalVolNumPages{SIAM Journal on Control and
  Optimization}{57}{1}{659--692}.
\PrintBackRefs{\CurrentBib}

\bibitem [\protect \citeauthoryear {%
{H. M. Treasury}%
}{%
{H. M. Treasury}%
}{%
{\protect \APACyear {2012}}%
}]{%
wheatley}
\APACinsertmetastar {%
wheatley}%
\begin{APACrefauthors}%
{H. M. Treasury}.%
\end{APACrefauthors}%
\unskip\
\newblock
\APACrefYearMonthDay{2012}{}{}.
\newblock
\APACrefbtitle {{The Wheatley Review of {LIBOR}: Final Report}} {{The Wheatley
  Review of {LIBOR}: Final Report}}\ \APACbVolEdTR{}{\BTR{}}.
\newblock
\APACaddressInstitutionEqAuth{}{{H. M. Treasury}}.
\newblock
\begin{APACrefURL}
  \url{https://assets.publishing.service.gov.uk/government/uploads/system/uploads/attachment_data/file/191762/wheatley_review_libor_finalreport_280912.pdf}
  \end{APACrefURL}
\PrintBackRefs{\CurrentBib}

\bibitem [\protect \citeauthoryear {%
Hernandez-Hernandez%
, Simon%
\BCBL {}\ \BBA {} Zervos%
}{%
Hernandez-Hernandez%
\ \protect \BOthers {.}}{%
{\protect \APACyear {2015}}%
}]{%
hernandez2015zero}
\APACinsertmetastar {%
hernandez2015zero}%
\begin{APACrefauthors}%
Hernandez-Hernandez, D.%
, Simon, R\BPBI S.%
\BCBL {}\ \BBA {} Zervos, M.%
\end{APACrefauthors}%
\unskip\
\newblock
\APACrefYearMonthDay{2015}{}{}.
\newblock
{\BBOQ}\APACrefatitle {A zero-sum game between a singular stochastic controller
  and a discretionary stopper} {A zero-sum game between a singular stochastic
  controller and a discretionary stopper}.{\BBCQ}
\newblock
\APACjournalVolNumPages{Annals of Applied Probability}{25}{1}{46--80}.
\PrintBackRefs{\CurrentBib}

\bibitem [\protect \citeauthoryear {%
Huang%
, Malham{\'e}%
\BCBL {}\ \BBA {} Caines%
}{%
Huang%
\ \protect \BOthers {.}}{%
{\protect \APACyear {2006}}%
}]{%
HMC2006}
\APACinsertmetastar {%
HMC2006}%
\begin{APACrefauthors}%
Huang, M.%
, Malham{\'e}, R\BPBI P.%
\BCBL {}\ \BBA {} Caines, P\BPBI E.%
\end{APACrefauthors}%
\unskip\
\newblock
\APACrefYearMonthDay{2006}{}{}.
\newblock
{\BBOQ}\APACrefatitle {Large population stochastic dynamic games: closed-loop
  {M}c{K}ean-{V}lasov systems and the {N}ash certainty equivalence principle}
  {Large population stochastic dynamic games: closed-loop {M}c{K}ean-{V}lasov
  systems and the {N}ash certainty equivalence principle}.{\BBCQ}
\newblock
\APACjournalVolNumPages{Communications in Information \&
  Systems}{6}{3}{221--252}.
\PrintBackRefs{\CurrentBib}

\bibitem [\protect \citeauthoryear {%
Jeanblanc-Picqu{\'e}%
}{%
Jeanblanc-Picqu{\'e}%
}{%
{\protect \APACyear {1993}}%
}]{%
jeanblanc1993impulse}
\APACinsertmetastar {%
jeanblanc1993impulse}%
\begin{APACrefauthors}%
Jeanblanc-Picqu{\'e}, M.%
\end{APACrefauthors}%
\unskip\
\newblock
\APACrefYearMonthDay{1993}{}{}.
\newblock
{\BBOQ}\APACrefatitle {Impulse control method and exchange rate} {Impulse
  control method and exchange rate}.{\BBCQ}
\newblock
\APACjournalVolNumPages{Mathematical Finance}{3}{2}{161--177}.
\PrintBackRefs{\CurrentBib}

\bibitem [\protect \citeauthoryear {%
Kallsen%
\ \BBA {} Muhle-Karbe%
}{%
Kallsen%
\ \BBA {} Muhle-Karbe%
}{%
{\protect \APACyear {2017}}%
}]{%
kallsen2017general}
\APACinsertmetastar {%
kallsen2017general}%
\begin{APACrefauthors}%
Kallsen, J.%
\BCBT {}\ \BBA {} Muhle-Karbe, J.%
\end{APACrefauthors}%
\unskip\
\newblock
\APACrefYearMonthDay{2017}{}{}.
\newblock
{\BBOQ}\APACrefatitle {The general structure of optimal investment and
  consumption with small transaction costs} {The general structure of optimal
  investment and consumption with small transaction costs}.{\BBCQ}
\newblock
\APACjournalVolNumPages{Mathematical Finance}{27}{3}{659--703}.
\PrintBackRefs{\CurrentBib}

\bibitem [\protect \citeauthoryear {%
Karatzas%
}{%
Karatzas%
}{%
{\protect \APACyear {1983}}%
}]{%
karatzas1982}
\APACinsertmetastar {%
karatzas1982}%
\begin{APACrefauthors}%
Karatzas, I.%
\end{APACrefauthors}%
\unskip\
\newblock
\APACrefYearMonthDay{1983}{}{}.
\newblock
{\BBOQ}\APACrefatitle {A class of singular stochastic control problems} {A
  class of singular stochastic control problems}.{\BBCQ}
\newblock
\APACjournalVolNumPages{{Advances in Applied Probability}}{15}{2}{225 -- 254}.
\PrintBackRefs{\CurrentBib}

\bibitem [\protect \citeauthoryear {%
Kruk%
}{%
Kruk%
}{%
{\protect \APACyear {2000}}%
}]{%
kruk2000}
\APACinsertmetastar {%
kruk2000}%
\begin{APACrefauthors}%
Kruk, L.%
\end{APACrefauthors}%
\unskip\
\newblock
\APACrefYearMonthDay{2000}{}{}.
\newblock
{\BBOQ}\APACrefatitle {Optimal policies for {N}-dimensional singular stochastic
  control problems part {I}: the {S}korokhod problem} {Optimal policies for
  {N}-dimensional singular stochastic control problems part {I}: the
  {S}korokhod problem}.{\BBCQ}
\newblock
\APACjournalVolNumPages{SIAM Journal on Control and
  Optimization}{38}{5}{1603--1622}.
\PrintBackRefs{\CurrentBib}

\bibitem [\protect \citeauthoryear {%
Kwon%
\ \BBA {} Zhang%
}{%
Kwon%
\ \BBA {} Zhang%
}{%
{\protect \APACyear {2015}}%
}]{%
KZ2015}
\APACinsertmetastar {%
KZ2015}%
\begin{APACrefauthors}%
Kwon, H.%
\BCBT {}\ \BBA {} Zhang, H.%
\end{APACrefauthors}%
\unskip\
\newblock
\APACrefYearMonthDay{2015}{}{}.
\newblock
{\BBOQ}\APACrefatitle {Game of singular stochastic control and strategic exit}
  {Game of singular stochastic control and strategic exit}.{\BBCQ}
\newblock
\APACjournalVolNumPages{Mathematics of Operations Research}{40}{4}{869--887}.
\PrintBackRefs{\CurrentBib}

\bibitem [\protect \citeauthoryear {%
Lasry%
\ \BBA {} Lions%
}{%
Lasry%
\ \BBA {} Lions%
}{%
{\protect \APACyear {2007}}%
}]{%
LL2007}
\APACinsertmetastar {%
LL2007}%
\begin{APACrefauthors}%
Lasry, J\BHBI M.%
\BCBT {}\ \BBA {} Lions, P\BHBI L.%
\end{APACrefauthors}%
\unskip\
\newblock
\APACrefYearMonthDay{2007}{}{}.
\newblock
{\BBOQ}\APACrefatitle {Mean field games} {Mean field games}.{\BBCQ}
\newblock
\APACjournalVolNumPages{Japanese Journal of Mathematics}{2}{1}{229--260}.
\PrintBackRefs{\CurrentBib}

\bibitem [\protect \citeauthoryear {%
Menaldi%
\ \BBA {} Robin%
}{%
Menaldi%
\ \BBA {} Robin%
}{%
{\protect \APACyear {1983}}%
}]{%
MR1983}
\APACinsertmetastar {%
MR1983}%
\begin{APACrefauthors}%
Menaldi, J\BHBI L.%
\BCBT {}\ \BBA {} Robin, M.%
\end{APACrefauthors}%
\unskip\
\newblock
\APACrefYearMonthDay{1983}{}{}.
\newblock
{\BBOQ}\APACrefatitle {On some cheap control problems for diffusion processes}
  {On some cheap control problems for diffusion processes}.{\BBCQ}
\newblock
\APACjournalVolNumPages{Transactions of the American Mathematical
  Society}{278}{2}{771--802}.
\PrintBackRefs{\CurrentBib}

\bibitem [\protect \citeauthoryear {%
Menaldi%
\ \BBA {} Taksar%
}{%
Menaldi%
\ \BBA {} Taksar%
}{%
{\protect \APACyear {1989}}%
}]{%
MT1989}
\APACinsertmetastar {%
MT1989}%
\begin{APACrefauthors}%
Menaldi, J\BHBI L.%
\BCBT {}\ \BBA {} Taksar, M\BPBI I.%
\end{APACrefauthors}%
\unskip\
\newblock
\APACrefYearMonthDay{1989}{}{}.
\newblock
{\BBOQ}\APACrefatitle {Optimal correction problem of a multidimensional
  stochastic system} {Optimal correction problem of a multidimensional
  stochastic system}.{\BBCQ}
\newblock
\APACjournalVolNumPages{Automatica}{25}{2}{223--232}.
\PrintBackRefs{\CurrentBib}

\bibitem [\protect \citeauthoryear {%
Meyer%
}{%
Meyer%
}{%
{\protect \APACyear {1976}}%
}]{%
Meyer76}
\APACinsertmetastar {%
Meyer76}%
\begin{APACrefauthors}%
Meyer, P\BPBI A.%
\end{APACrefauthors}%
\unskip\
\newblock
\APACrefYearMonthDay{1976}{}{}.
\newblock
{\BBOQ}\APACrefatitle {Martingales locales changement de variables, formules
  exponentielles} {Martingales locales changement de variables, formules
  exponentielles}.{\BBCQ}
\newblock
\BIn{} \APACrefbtitle {{S}{\'e}minaire de {P}robabilit{\'e}s {X}
  {U}niversit{\'e} de {S}trasbourg} {{S}{\'e}minaire de {P}robabilit{\'e}s {X}
  {U}niversit{\'e} de {S}trasbourg}\ (\BPGS\ 291--331).
\newblock
\APACaddressPublisher{}{Springer}.
\PrintBackRefs{\CurrentBib}

\bibitem [\protect \citeauthoryear {%
Ramanan%
}{%
Ramanan%
}{%
{\protect \APACyear {2006}}%
}]{%
ramanan2006}
\APACinsertmetastar {%
ramanan2006}%
\begin{APACrefauthors}%
Ramanan, K.%
\end{APACrefauthors}%
\unskip\
\newblock
\APACrefYearMonthDay{2006}{}{}.
\newblock
{\BBOQ}\APACrefatitle {Reflected diffusions defined via the extended
  {Skorokhod} map} {Reflected diffusions defined via the extended {Skorokhod}
  map}.{\BBCQ}
\newblock
\APACjournalVolNumPages{Electronic journal of probability}{11}{}{934--992}.
\PrintBackRefs{\CurrentBib}

\bibitem [\protect \citeauthoryear {%
Rosen%
}{%
Rosen%
}{%
{\protect \APACyear {1987}}%
}]{%
rosen1987}
\APACinsertmetastar {%
rosen1987}%
\begin{APACrefauthors}%
Rosen, J.%
\end{APACrefauthors}%
\unskip\
\newblock
\APACrefYearMonthDay{1987}{}{}.
\newblock
{\BBOQ}\APACrefatitle {Joint continuity of the intersection local times of
  {Markov} processes} {Joint continuity of the intersection local times of
  {Markov} processes}.{\BBCQ}
\newblock
\APACjournalVolNumPages{{Annals of Probability}}{15}{}{659--675}.
\PrintBackRefs{\CurrentBib}

\bibitem [\protect \citeauthoryear {%
Soner%
\ \BBA {} Shreve%
}{%
Soner%
\ \BBA {} Shreve%
}{%
{\protect \APACyear {1989}}%
}]{%
SS1989}
\APACinsertmetastar {%
SS1989}%
\begin{APACrefauthors}%
Soner, H\BPBI M.%
\BCBT {}\ \BBA {} Shreve, S\BPBI E.%
\end{APACrefauthors}%
\unskip\
\newblock
\APACrefYearMonthDay{1989}{}{}.
\newblock
{\BBOQ}\APACrefatitle {Regularity of the value function for a two-dimensional
  singular stochastic control problem} {Regularity of the value function for a
  two-dimensional singular stochastic control problem}.{\BBCQ}
\newblock
\APACjournalVolNumPages{SIAM Journal on Control and
  Optimization}{27}{4}{876--907}.
\PrintBackRefs{\CurrentBib}

\bibitem [\protect \citeauthoryear {%
Sun%
}{%
Sun%
}{%
{\protect \APACyear {2018}}%
}]{%
sun2018}
\APACinsertmetastar {%
sun2018}%
\begin{APACrefauthors}%
Sun, L\BHBI H.%
\end{APACrefauthors}%
\unskip\
\newblock
\APACrefYearMonthDay{2018}{}{}.
\newblock
{\BBOQ}\APACrefatitle {Systemic risk and interbank lending} {Systemic risk and
  interbank lending}.{\BBCQ}
\newblock
\APACjournalVolNumPages{Journal of Optimization Theory and
  Applications}{179}{2}{400--424}.
\PrintBackRefs{\CurrentBib}

\bibitem [\protect \citeauthoryear {%
Wang%
\ \BBA {} Ewald%
}{%
Wang%
\ \BBA {} Ewald%
}{%
{\protect \APACyear {2010}}%
}]{%
WE2010}
\APACinsertmetastar {%
WE2010}%
\begin{APACrefauthors}%
Wang, W\BHBI K.%
\BCBT {}\ \BBA {} Ewald, C\BHBI O.%
\end{APACrefauthors}%
\unskip\
\newblock
\APACrefYearMonthDay{2010}{}{}.
\newblock
{\BBOQ}\APACrefatitle {Dynamic voluntary provision of public goods with
  uncertainty: {a} stochastic differential game model} {Dynamic voluntary
  provision of public goods with uncertainty: {a} stochastic differential game
  model}.{\BBCQ}
\newblock
\APACjournalVolNumPages{Decisions in Economics and Finance}{33}{2}{97--116}.
\PrintBackRefs{\CurrentBib}

\bibitem [\protect \citeauthoryear {%
Widder%
}{%
Widder%
}{%
{\protect \APACyear {1941}}%
}]{%
widder2015laplace}
\APACinsertmetastar {%
widder2015laplace}%
\begin{APACrefauthors}%
Widder, D\BPBI V.%
\end{APACrefauthors}%
\unskip\
\newblock
\APACrefYear{1941}.
\newblock
\APACrefbtitle {The {L}aplace {T}ransform} {The {L}aplace {T}ransform}.
\newblock
\APACaddressPublisher{}{Princeton university press}.
\PrintBackRefs{\CurrentBib}

\bibitem [\protect \citeauthoryear {%
Williams%
, Chow%
\BCBL {}\ \BBA {} Menaldi%
}{%
Williams%
\ \protect \BOthers {.}}{%
{\protect \APACyear {1994}}%
}]{%
WCM1994}
\APACinsertmetastar {%
WCM1994}%
\begin{APACrefauthors}%
Williams, S.%
, Chow, P.%
\BCBL {}\ \BBA {} Menaldi, J.%
\end{APACrefauthors}%
\unskip\
\newblock
\APACrefYearMonthDay{1994}{}{}.
\newblock
{\BBOQ}\APACrefatitle {Regularity of the Free Boundary in Singular Stochastic
  Control} {Regularity of the free boundary in singular stochastic
  control}.{\BBCQ}
\newblock
\APACjournalVolNumPages{Journal of Differential Equations}{111}{1}{175 - 201}.
\PrintBackRefs{\CurrentBib}

\bibitem [\protect \citeauthoryear {%
Zariphopoulou%
}{%
Zariphopoulou%
}{%
{\protect \APACyear {1992}}%
}]{%
zariphopoulou1992investment}
\APACinsertmetastar {%
zariphopoulou1992investment}%
\begin{APACrefauthors}%
Zariphopoulou, T.%
\end{APACrefauthors}%
\unskip\
\newblock
\APACrefYearMonthDay{1992}{}{}.
\newblock
{\BBOQ}\APACrefatitle {Investment-consumption models with transaction fees and
  {M}arkov-chain parameters} {Investment-consumption models with transaction
  fees and {M}arkov-chain parameters}.{\BBCQ}
\newblock
\APACjournalVolNumPages{SIAM Journal on Control and
  Optimization}{30}{3}{613--636}.
\PrintBackRefs{\CurrentBib}

\end{thebibliography}

\appendix
\section{Verification theorem}\label{app:verification}
\begin{theorem}\label{thm:verification}
Let $u\in W_{\rm loc}^{2,\infty}(\mathbb{R}^N)$ be a convex solution to the HJB equation \eqref{po_hjb} and $0\leq \partial^2_{\pmb{\nu}}u(\pmb{x}) \leq C$ (in the weak sense). Under Assumptions {\bf A1-A3},  $u$ is equal to the value function $v$ of    (\ref{centrol_controller}):
$$v(\pmb{x}) = \min_{\pmb{\xi}\in \mathcal{U}}J(\pmb{x};\pmb{\xi})=u(\pmb{x}).$$
In addition, if there exists   $\pmb{\xi}^*\in \mathcal{U}$ such that
\begin{itemize}
    \item $\pmb{X}_t^*=\pmb{x}+\pmb{\sigma} \pmb{B}_t +\pmb{\xi}^*_t \in \overline{\mathcal{C}}_N$ for every $t \ge 0$, $\mathbb{P}$-a.s.;
    \item $\pmb{\xi}^*_t =\pmb{\xi}_0^*+\int_0^t \pmb{N}_s d\eta^*_s $ with $\pmb{\xi}_0^* = \pi(\pmb{x}) - \pmb{x}$ and $\eta_t^* = \int_0^t {\bf 1}_{\{\pmb{X}_s^* \in \partial\mathcal{C}_N, \pmb{N}_s\in \bf{r}(\pmb{X}_s^*)\}}d\eta^*_s$ for every $t \ge 0$, $\mathbb{P}$-a.s.;
    \item $\pmb{\xi}^*$ is continuous if $\pmb{\xi}_0^*=0$;
\end{itemize}
where $\mathcal{C}_N= \left\{\pmb{x} \given \beta(\nabla u(\pmb{x}))<1\right\}$, and $\gamma$ and $\pmb{r}$ are defined in \eqref{eq:gamma_direction}-\eqref{cone_original} such that Assumption {\bf A5} holds, then $\pmb{\xi}^*$ is an optimal control.
\end{theorem}

\begin{proof}
By the Sobolev embedding \cite[Ch. 9, Cor. 9.15]{brezis2010functional}, $u \in \mathcal{C}^1(\mathbb{R}^N)$ since $u\in\mathcal{W}_{\rm loc}^{2,\infty}(\mathbb{R}^N)$. In addition, $u$ is convex and $0\leq \partial^2_{\pmb{\nu}}u(\pmb{x}) \leq C$, then apply the It\^{o}-Tanaka-Meyer formula \citep{carlen1992semimartingale} to the function $e^{-\rho t} u(\pmb{X}_t)$ of the semi-martingale $\pmb{X}_t = \pmb{x}+\pmb{\mu}t+\pmb{\sigma}\pmb{B}_t+\pmb{\xi}_t$,
\begin{eqnarray}
    e^{-\alpha T}u(\pmb{X}_t)-u(\pmb{x}) &=& \int_0^T e^{-\alpha t} \nabla u(\pmb{X}_t) d \pmb{B}_t +\int_0^T e^{-\alpha t} (\mathcal{L}u(\pmb{X}_t)-\alpha u(\pmb{X}_t))dt\nonumber\\
    &&+\int_{0}^T e^{-\alpha T} (\sum_{i=1}^N \partial_{x^i}u(\pmb{X}_t)d\xi_t^{i,+}-\sum_{i=1}^N \partial_{x^i}u(\pmb{X}_t)d\xi_t^{i,-})\nonumber\\
    &&+\sum_{0 \leq t \leq T}  e^{-\alpha T} \left( \Delta u(\pmb{X}_t)-\sum_{i=1}^N \partial_{x^i}u(\pmb{X}_t)\Delta X_t^i\right),\label{ito}
\end{eqnarray}
with the notation $\Delta \phi_t := \phi_t-\phi_{t-}$. Since $u$ is  a convex solution to the HJB equation \eqref{po_hjb},  we have $\mathbb{P}$-a.s. for all $0\leq t \leq T$,
\begin{eqnarray}
&&  \rho u(\pmb{X}_t) - \mathcal{L} u(\pmb{X}_t) -H((\pmb{X}_t)) \leq 0,\label{verification_ineqs1}\\
  && \partial_{x^i} u(\pmb{X}_t) d\xi_t^{i,-}\leq L_iK_i^{-}d\xi_t^{i,-}, -L_iK_i^{+}d\xi_t^{i,+} \leq \partial_{x^i} u(\pmb{X}_t) d\xi_t^{i,+},\label{verification_ineqs2}\\
&&   \Delta u(\pmb{X}_t)-\sum_{i=1}^N \partial_{x^i}u(\pmb{X}_t)\Delta X_t^i \ge 0.\label{verification_ineqs3}
\end{eqnarray}
Taking expectation on both sides of \eqref{ito}, we have for any admissible policy $\pmb{\xi}$,
\begin{eqnarray}
    e^{-\alpha T} \mathbb{E}[u(\pmb{X}_t)] + \mathbb{E}\int_0^T e^{-\rho t}\Big(H(\pmb{X}_t)dt + K_i^+d\xi_{t}^{i,+}+K_i^-d\xi_{t}^{i,-}\Big) \ge u(\pmb{x}).
\end{eqnarray}
Since $0\leq \partial^2_{\pmb{\nu}}u(\pmb{x}) \leq C$, there exists constant $K=K(C)>0$ such that $|u(\pmb{x})|\leq K(1+\|\pmb{x}\|^2)$. Hence
$ \mathbb{E}[u(\pmb{X}_t)]  \leq 9K\left(1+\|\pmb{x}\|^2+\|\pmb{\sigma}\|^2\|\pmb{B}_T\|^2+\|\pmb{\xi}_T\|^2\right)$.

Now we show that $\mathbb{E}[\|\pmb{\xi}_T\|^2]=o(e^{\rho t})$. If this does not hold, then standard arguments (e.g. \cite[P 39]{widder2015laplace}) can show that there exists $i\in\{1,2,\cdots,N\}$ such that $\mathbb{E}[\int_0^{\infty}e^{-\rho t} (d\xi_t^{i,+}+d\xi_t^{i,-})]=\infty$, which violates the condition in the definition of admissible control set $\mathcal{U}_N$.
Hence by letting $T\rightarrow \infty$ we have
\begin{eqnarray}\label{eq:verification_oneside}
    \mathbb{E}\int_0^{\infty} e^{-\rho t}\Big(H(\pmb{X}_t)dt + K_i^+d\xi_{t}^{i,+}+K_i^-d\xi_{t}^{i,-}\Big) \ge u(\pmb{x}).
\end{eqnarray}
Under Assumption {\bf A1-A3}, Theorem \ref{prop:regularity_uniqueness} holds and hence $ u(\pmb{x})=v(\pmb{x})$ for all $\pmb{x}\in \mathbb{R}^N$.

To achieve the equality in \eqref{eq:verification_oneside}, it suffices to achieve the equalities in conditions \eqref{verification_ineqs1}-\eqref{verification_ineqs3}, which requires the following properties from the optimal control process $\pmb{\xi}^*$:
\begin{itemize}
    \item  $\pmb{X}_t^*=\pmb{x}+\pmb{\sigma} \pmb{B}_t +\pmb{\xi}^*_t \in \overline{\mathcal{C}}_N$  hence $\rho u(\pmb{X}^*_t) - \mathcal{L} u(\pmb{X}^*_t) -H(\pmb{X}^*_t)=0$ for every $t \ge 0$, $\mathbb{P}$-a.s.;
    \item The only possible jump is at time $0$ when $\pmb{x}\notin \mathcal{C}_N$. Under Assumption {\bf A5} and the convexity of $u$, we can show that $u(\pmb{x}) = u(\pi(\pmb{x}))+l(\pmb{x}-\pi(\pmb{x}))$ with
     $l(\pmb{y}) = \sum_i l_i(y_i)$, where 
\begin{eqnarray} 
    l_{i}(y_i)=\left\{
                \begin{array}{ll}
                 L_iK_i^- y_i, \quad \mbox{ if } y_i \geq 0,\\
                 -L_iK^+_i y_i, \quad \mbox{ if } y_i <0.
                \end{array}
              \right.
\end{eqnarray}
The proof is the same as the one for Theorem \ref{epsilon_out}. And hence the equality in \eqref{verification_ineqs3} holds.
    \item By the definition of $\pmb{\xi}^*$, $d\pmb{\xi}_t^*\neq 0$ only when $\pmb{X}^*_{t-}\notin \mathcal{C}_N$. Hence the equality in \eqref{verification_ineqs2} holds.
\end{itemize}
\end{proof}

\end{document}